\documentclass[reqno]{amsart}

\usepackage{mathtools,amsmath,amssymb}
\usepackage[pdfencoding=auto, psdextra]{hyperref}
\hypersetup{colorlinks=true,allcolors=[rgb]{0,0,0.6}}

\allowdisplaybreaks

\usepackage[hmargin= 1.3 in,vmargin=1 in]{geometry}

\usepackage{enumerate}
\usepackage{hyperref}

\usepackage{amsmath}
\usepackage{amsfonts}
\usepackage{amssymb}
\usepackage{graphicx}
\usepackage{amsthm,color,yfonts,cite, latexsym}
\usepackage{paralist}
\usepackage{mathrsfs}
\usepackage{hyperref}
\usepackage{physics}
\usepackage{bbm}
\usepackage{bm}
\usepackage{mathtools}
\mathtoolsset{showonlyrefs}

\usepackage{tikz}
\usepackage{tikz-cd}
\usetikzlibrary{cd}
\usetikzlibrary{arrows.meta}
\usetikzlibrary{matrix, arrows, positioning}
\usetikzlibrary{shadows}

\usepackage{tikz}
\usepackage{tikz-cd}
\usetikzlibrary{cd}
\usetikzlibrary{arrows.meta}
\usetikzlibrary{matrix, arrows, positioning}
\usetikzlibrary{shadows}

\newtheorem{theorem}{Theorem}
\newtheorem{definition}[theorem]{Definition}
\newtheorem{proposition}[theorem]{Proposition}

\newtheorem{lemma}[theorem]{Lemma}
\newtheorem{corollary}[theorem]{Corollary}

\newtheorem{remark}[theorem]{Remark}

\makeatletter
\newcommand*{\rom}[1]{\expandafter\@slowromancap\romannumeral #1@}
\makeatother

\newcommand{\ls}{\lesssim}

\renewcommand{\lg}{\langle}
\newcommand{\rg}{\rangle}
\newcommand{\R}{\mathbb{R}}
\newcommand{\T}{\mathbb{T}}
\newcommand{\C}{\mathbb{C}}
\newcommand{\Z}{\mathbb{Z}}

\newcommand{\ep}{\varepsilon}
\newcommand{\al}{\alpha}

\newcommand{\BN}{\mathbb{N}}

\newcommand{\CA}{\mathcal{A}}
\newcommand{\CB}{\mathcal{B}}

\newcommand{\CH}{\mathcal{H}}

\newcommand{\CS}{\mathcal{S}}

\newcommand{\SA}{\mathscr{A}}
\newcommand{\SB}{\mathscr{B}}

\newcommand{\FS}{\mathfrak{S}}

\newcommand{\ga}{\gamma}
\newcommand{\de}{\delta}
\newcommand{\ps}{\psi}

\newcommand{\ta}{\tau}
\renewcommand{\th}{\theta}
\newcommand{\lm}{\lambda}
\newcommand{\si}{\sigma}
\newcommand{\om}{\omega}

\newcommand{\rh}{\rho}

\newcommand{\Ga}{\Gamma}
\newcommand{\De}{\Delta}
\newcommand{\Ph}{\Phi}
\newcommand{\Ps}{\Psi}

\newcommand{\Om}{\Omega}

\newcommand{\pl}{\partial}
\newcommand{\wt}{\widetilde}
\newcommand{\wh}{\widehat}

\newcommand{\Ck}[1]{\left\{#1\right\}}
\newcommand{\Dk}[1]{\left[#1\right]}
\newcommand{\K}[1]{\left(#1\right)}

\newcommand{\No}[1]{\left\| #1 \right\|}

\newcommand{\I}{\infty}
\newcommand{\sd}{\langle  \partial_x \rangle}

\newcommand{\tx}{{t,x}}

\newcommand{\tw}{\frac{1}{2}}

\newcommand{\II}{\mathbbm{1}}

\newcommand{\gs}{\gtrsim}

\newcommand{\ov}{\overline}

\renewcommand{\Tr}{\operatorname{Tr}}

\newcommand{\vrh}{\ov{\rh}}
\newcommand{\PN}{P_{\le N}}
\newcommand{\N}{\BN}

\usepackage{wrapfig}
\usepackage{tikz}
\usetikzlibrary{arrows,calc,decorations.pathreplacing}
\definecolor{light-gray1}{gray}{0.90}
\definecolor{light-gray2}{gray}{0.80}
\definecolor{light-gray3}{gray}{0.60}

\numberwithin{equation}{section}

\numberwithin{theorem}{section}

\numberwithin{table}{section}

\numberwithin{figure}{section}

\makeatletter
\def\l@section{\@tocline{1}{0pt}{0pt}{}{}}%

\def\l@subsection{\@tocline{2}{0pt}{2.5em}{}{}}%

\def\l@subsubsection{\@tocline{3}{0pt}{3.0em}{}{}}%

\makeatother

\title[Orthonormal Strichartz estimates for renormalised densities]{Applications of renormalisation to orthonormal Strichartz estimates and the NLS system on the circle}
\author[S. Hadama]{Sonae Hadama}
\address{The University of Osaka, 1-1 Machikaneyama-cho, Toyonaka-shi, Osaka 560-0043, Japan}
\email{hadama.sonae.sci@osaka-u.ac.jp}

\author[A. Rout]{Andrew Rout}
\address{Dipartimento di Matematica, Politecnico di Milano, P.zza Leonardo da Vinci 32, 20133, Milano, Italy.}
\email{andrewjames.rout@polimi.it}

\subjclass[2020]{Primary: 42B37; Secondary: 35Q55, 35B45}
\keywords{Orthonormal Strichartz estimates, Cubic NLS system, Well-posedness, Ill-posedness}
\thanks{The first author was supported by JSPS KAKENHI Grant Number 24KJ1338. The second author acknowledges the support by the Italian Ministry of University and Research (MUR) through
the PRIN 2022 grant ``OpeN and Effective quantum Systems (ONES)''}
\date{\today}

\begin{document}
\begin{abstract}
{In this paper, we introduce a renormalisation procedure for the density associated with the system of nonlinear Schr\"odinger equations (NLSS) on a circle. We show that this renormalised density satisfies better orthonormal Strichartz estimates than the non-renormalised density, which was considered in Nakamura (2020). As an application, we determine the critical Schatten exponent below which the cubic renormalised NLSS on the circle is globally well-posed and above which it is ill-posed. Finally, we show that the improvement for orthonormal Strichartz estimates satisfied by the renormalised density on $\T^d$ for $d \ge 2$ is minimal.}
\end{abstract}
\maketitle
\tableofcontents
\section{Introduction}
\subsection{Nonlinear Schr\"odinger system}
\label{SEC:NLSS:1}
The cubic nonlinear Schr\"odinger system (NLSS) is given by
\begin{equation}
\label{NLSS_introduction}
\begin{dcases}
i \partial_t u_n = -\Delta u_n \pm \rho u_n, \quad n \in \mathbb{N}, \\
\rh = \sum_{m \in \BN} \lambda_m |u_m(t,x)|^2 \\
u_{n}(0) = \phi_n \in H^{s}(X),
\end{dcases}
\end{equation}
where $(\lambda_m) \in \ell^\alpha(\N)$. If one adds the assumption that the $(\phi_n)_{n \in \mathbb{N}}$ are an orthonormal system, then one can use the NLSS as a model for a mixture of fermions, see the introduction of \cite{LS15}. In the case of an orthonormal system, a natural question is the optimal choices of $\al$ and $s$ for which there is a local solution to the NLSS. This question is usually answered by invoking so-called {\it orthonormal Strichartz estimates}. In this paper, we introduce a renormalisation procedure for the density. More precisely, one can rewrite \eqref{NLSS_introduction} as
\begin{equation}
\label{cubic_Hartree_introduction}
\begin{cases}
i \partial_t \gamma = [-\Delta \pm \rho_\gamma, \gamma], \\
\gamma(t=0) = \gamma_0,
\end{cases}
\end{equation}
where $\gamma$ denotes the one-particle density operator of the system. For a more precise definition of the objects in \eqref{cubic_Hartree_introduction}, we direct the reader to Section \ref{SEC:NLSS:introduction}. Our renormalisation procedure is motivated by the simple observation that for any finite constant $c \in \C$, one has that
\begin{equation*}
[A + c, B] = [A,B].
\end{equation*}
In particular, one can formally subtract a constant from the density without changing the NLSS. We will thus define a {\it renormalised density} $\overline{\rho}_A$ which is formally defined by  
\begin{equation*}
\vrh_A(x) := \rh_{A}(x) - \frac{1}{ \mathrm{Vol} (X)}\Tr A  \equiv \rh_A(x) - \frac{1}{\mathrm{Vol}(X)}\int_{X} d x' \rh_A(x')
\end{equation*}
when $\mathrm{Vol}(X)<\I$.
For trace-class operators, this is a finite constant. However, for non trace-class operators, we will need to be more careful, see Remark \ref{remark_non_trace}. We will show that when working on the one dimensional torus, this renormalised density satisfies better (orthonormal) Strichartz estimates compared to the standard density, see Theorems \ref{thm:L2 bound renormalised} and \ref{thm:L3 bound renormalised}.
\subsection{Notation}
Let us fix the notation that we will use throughout the paper. We will use $C$ to denote a generic constant, which may change line-to-line. When $C$ depends on the variables $x_i$, we will write $C = C(x_i)$. We also adopt the notation $A \lesssim B$ to denote $A \leq C B$, and similarly $A \lesssim_{x_i} B$. We will write $A \sim B$ when $A \lesssim B$ and $B \lesssim A$.
\subsubsection*{Fourier coefficients}
We fix the spatial domain $X := \T:=\R/2\pi\Z$ and the basis functions
\begin{equation}
\label{en_defintion}
e_n(x) := \frac{1}{\sqrt{2\pi}} e^{inx}.
\end{equation}
Then for a function $\phi \in L^2(\T,\C) \equiv L^2(\T)$, we define its Fourier coefficients
\begin{equation*}
\wh{\phi}(n):= \int_\T dx \, \ov{e_{n}(x)}\phi(x).
\end{equation*}
We also consider the Dirichlet projection operator $P_{\le N}$ defined via
\begin{equation*}
P_{\le N} \phi := \sum_{n=-N}^N \wh{\phi}(n) \, e_n(x).
\end{equation*}
We also adopt the notation that for $f:[0,T]\times \T_x \to \C$, the average of $f$ is given by
	\begin{equation*}
		\lg f \rg := \frac{1}{2\pi T} \int_0^T \int_{\T} dx \, dt \, f (t,x), 
        \quad \lg f(t) \rg_x := \frac{1}{2\pi}\int_\T dx \, f(t,x).
	\end{equation*}
\subsubsection*{Free Schr\"odinger kernel}
The free Schr\"odinger equation on $\T$ is given by
\begin{equation}
\label{free_Schro_T}
	\begin{dcases}
		i\pl_t u + \De u = 0, & \\
		u(0) = \phi \in L^2(\T).
	\end{dcases}
\end{equation}
We denote by $U(t)$ the {\it free Schr\"odinger kernel} given by
\begin{equation}
\label{free_schro_kernel_definition}
U(t) \phi \equiv e^{it\De}\phi(x) := \sum_{n\in\Z} e_n(x)e^{-itn^2}\wh{\phi}(n).
\end{equation}
We will write $U^*(t):= U(t)^*$. More generally, we write $(A(t))^*:=A^*(t)$ for any family of linear operators on $L^2(\T)$.

\subsubsection*{Schatten classes and operators}
Let us recall some standard terminology from the operator theory.
Let $H$ and $K$ be complex and separable Hilbert spaces. Let $A:H \to K$ be a linear operator. 
We write $\CB(H\to K)$ to denote the space of all bounded operators from $H$ to $K$, and write $\CB(H) \equiv \CB(H \to H)$.
We define $|A|:= \sqrt{A^* A}$.
Moreover, we define the Schatten--$\al$ class $\FS^\al(H\to K)$ by
\begin{equation}
	\|A\|_{\FS^\al(H\to K)} := \begin{dcases}
		(\Tr_H(|A|^\al))^{1/\al}\quad &\text{if } 1\le \al < \I,\\
        \sup \mathrm{spec} |A| \quad &\text{if } \al = \I,
	\end{dcases}
\end{equation}
where we denote the spectrum of an operator by $\mathrm{spec}$. Then $\FS^1(H\to K)$ is the set of trace-class operators and $\FS^2(H\to K)$ is the set of Hilbert--Schmidt operators.
In general, we have $1\le \al_1 < \al_2 \le \I$ implies $\FS^{\al_1}(H\to K)\subsetneq \FS^{\al_2}(H\to K)$.
When $A$ is compact we can use singular value decomposition to write
\begin{equation}\label{eq:spectral decomposition}
	A = \sum_{n=1}^\I a_n|\phi_n \rg \lg \ps_n|,
\end{equation}
where $a_n \ge a_{n+1}\ge 0$ and $(\phi_n)_{n=1}^\I \subset K$, $(\ps_n)_{n=1}^\I \subset H$ are orthonormal systems. At times, it will also be helpful to identify integral operators $\gamma_0$ with their integral kernels $\gamma_0(x,x')$.
\subsection{Orthonormal Strichartz estimates}
For the convenience of the reader, we briefly recall some background on orthonormal Strichartz estimates when $X= \R^d$. The free Schr\"{o}dinger equation on $\R^d$ is given by
\begin{equation*}
	\begin{dcases}
		i\pl_t u + \De_{\R^d} u = 0, \\
		u(0)=\phi \in L^2(\R^d).
	\end{dcases}
\end{equation*}
It is well known that the solution to the linear Schr\"dinger equation on $X = \R^d$, denoted $e^{it\De_{\R^d}}\phi$, satisfies the (one-body) Strichartz estimate
\begin{equation}\label{eq:Stri}
	\|e^{it\De_{\R^d}}\phi\|_{L^p_t(\R;L^q_x(\R^d))} \ls \|\phi\|_{L^2(\R^d)}
\end{equation}
for appropriate pairs of the exponents $(p,q)$. We direct the reader, for example, to \cite{Tao06} for further details.
In \cite{FLLS24}, Frank, Lewin, Lieb, and Seiringer considered the generalisation
\begin{equation}\label{eq:OStri}
	\bigg\|\sum_{n=1}^\I a_n |e^{it\De_{\R^d}}\phi_n|^2\bigg\|_{L^{p/2}_t(\R;L^{q/2}_x(\R^d))} \ls \|a_n\|_{\ell^\al_n}
\end{equation}
where $(\phi_n)_{n=1}^\I \subset L^2(\R^d)$ is an orthonormal system and $(a_n)_{n=1}^\I \in \ell_n^\al$. For $\alpha = 1$, this is a simple consequence of \eqref{eq:Stri} and the triangle inequality. However the assumption that $(\phi_n)_{n=1}^\I\subset L^2(\R^d)$ is an orthonormal system allows this to be extended to appropriate choices of $\al>1$. Since \cite{FLLS24}, these so-called \textit{orthonormal Strichartz estimates}
have been extensively studied, see \cite{BHLNS19, BLN20, BLN21, BKS24, CHP17, CHP18, FMSW26, FS17, Had25, HH25, HY26, Hos24, Hos25AHP, Hos25, Hos25PAA, Nak20, MS25, SBMM24, SB25, Mak26} and the references therein.
\subsubsection{Orthonormal Strichartz estimates for density function}
One can consider the counterpart of \eqref{eq:OStri} on $\T$. Recall the free Schr\"odinger equation on $\T$ from \eqref{free_Schro_T}. By restricting $[0,T] := [0,2\pi]$, we can view $u(t,x)$ as a function on $\T^2 = \T_t \times\T_x$. Then we have the one-body estimates
\begin{align}
	&\|U(t)\phi\|_{L^4_\tx(\T^2)} \ls \|\phi\|_{L^2_x(\T)},\label{eq:L4} \\
	&\|U(t)P_{\le N}\phi\|_{L^6_\tx(\T^2)} \ls_\ep N^\ep \|\phi\|_{L^2_x(\T)}. \label{eq:L6}
\end{align}
See \cite{Z74} for \eqref{eq:L4} and \cite{Bou93} for \eqref{eq:L6}. To generalise \eqref{eq:L4} and \eqref{eq:L6}, it is convenient for us to introduce the notion of the density function for operators $A:L^2(\T) \to L^2(\T)$.
\begin{definition}[Density function]
\label{density_function_definition}
Let $A:L^2(\T)\to L^2(\T)$ be a trace-class operator. Recall the singular value decomposition of $A$ given by \eqref{eq:spectral decomposition}. We define the density of $A$ by
\begin{equation}\label{eq:definition of density}
	\rh_A(x):= \sum_{n=1}^\I a_n \phi_n(x) \ov{\ps_n(x)}.
\end{equation}
\end{definition}
\begin{remark}
  It follows from Lemma \ref{duality_lemma} that $\rh_A$ is well-defined and $A \mapsto \rho_A$ is linear for $A\in \FS^1$. We also note that the density is real-valued for self-adjoint $A \in \FS^1$.
\end{remark}
\begin{remark}
Formally one would like to define the density function of $A$ by $\rh_A = \rh(A) = A(x,x)$, where $A(x,x')$ is the integral kernel of $A$. However, because the subset $\{(x,x') \in \T^2 \mid x=x'\} \subset \T^2$ is a null set, $A(x,x)$ is not well-defined. We note that the right hand side of \eqref{eq:definition of density} is well-defined in $L^1(\T)$ because $A$ is trace-class, $(a_n)_{n=1}^\I \in \ell_n^1$, and $\|\phi_n\|_{L^2_x(\T)} = \|\ps_n\|_{L^2_x(\T)} = 1$ for all $n \in \BN$.
\end{remark}
\begin{remark}
In this paper, we will state our orthonormal Strichartz estimates in terms of the (one-particle) density operator $\gamma$, which will be convenient in applications to the NLSS. In particular, for self-adjoint positive $\gamma_0 \in \mathfrak{S}^\alpha$, by singular value decomposition, we can write 
\begin{equation*}
	\ga_0 = \sum_{n=1}^\I a_n |\phi_n\rg \lg \phi_n|.
\end{equation*}
We can also write
\begin{equation*}
	U(t)\ga_0 U^*(t) = \sum_{n=1}^\I a_n |U(t) \phi_n\rg \lg U(t) \phi_n|, 
	\quad \rh(U(t) \ga_0 U^*(t)) = \sum_{n=1}^\I a_n |U(t)\phi_n|^2.
\end{equation*}
\end{remark}
We are now able to state our generalisations of \eqref{eq:L4} and \eqref{eq:L6}.
\begin{proposition}[Orthonormal version of \eqref{eq:L4}]
\label{prop:L2 OStri 1D torus paraphrase}
	If $\al=1$, then we have 
	\begin{equation}\label{eq:L2 OStri 1D torus paraphrase}
		\|\rh(U(t)\ga_0 U^*(t))\|_{L^2_{t,x}(\T^2)} \ls \|\ga_0\|_{\FS^\al}
	\end{equation}
	for any $\ga_0 \in \FS^\al$.
	Moreover, this is sharp in the sense that \eqref{eq:L2 OStri 1D torus paraphrase} fails if $\al > 1$.
\end{proposition}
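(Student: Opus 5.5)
The positive part ($\al=1$) will follow from the one-body estimate \eqref{eq:L4} and the triangle inequality. Write $\ga_0=\sum_n a_n|\phi_n\rg\lg\ps_n|$ by \eqref{eq:spectral decomposition}. Then $\rh(U(t)\ga_0U^*(t))=\sum_n a_n\,(U(t)\phi_n)\,\ov{U(t)\ps_n}$, with the sum converging in $L^1_x$ for each $t$. Minkowski and Cauchy--Schwarz give
\begin{equation*}
\|\rh(U(t)\ga_0U^*(t))\|_{L^2_{t,x}}\le\sum_n |a_n|\,\|U(t)\phi_n\|_{L^4_{t,x}}\|U(t)\ps_n\|_{L^4_{t,x}}\ls\sum_n|a_n|=\|\ga_0\|_{\FS^1},
\end{equation*}
where \eqref{eq:L4} has been applied to each factor. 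I only need to check that the series also converges in $L^2_{t,x}$ to the same limit, which is immediate from absolute convergence.

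For sharpness when $\al>1$, I will test against a finite-rank projection $\ga_0=P_{\le N}$, for which $\|\ga_0\|_{\FS^\al}=(2N+1)^{1/\al}$. Since $P_{\le N}$ commutes with $U(t)$, we have $U(t)\ga_0U^*(t)=P_{\le N}$, and its density is constant: $\rh=\sum_{|n|\le N}|e_n(x)|^2=(2N+1)/(2\pi)$. Hence the $L^2_{t,x}(\T^2)$ norm is $\sim N$, while the right-hand side is $\sim N^{1/\al}$. The ratio $N^{1-1/\al}$ tends to infinity for $\al>1$, so \eqref{eq:L2 OStri 1D torus paraphrase} fails.

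The only slightly delicate point is justifying the manipulation of the density (its well-definedness and linearity, from Lemma \ref{duality_lemma}) and the convergence of the series in $L^2_{t,x}$. Neither is a real obstacle. The counterexample also explains the renormalisation motivation later in the paper: the entire growth comes from the spatial mean of the density, which the renormalised density $\vrh$ removes.
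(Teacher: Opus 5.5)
Your proposal is correct and follows the paper's proof: the $\al=1$ bound is the triangle inequality combined with \eqref{eq:L4} (through $\|fg\|_{L^2_{t,x}}\le\|f\|_{L^4_{t,x}}\|g\|_{L^4_{t,x}}$). For sharpness the paper also uses operators diagonal in the basis $(e_n)$, whose density is the constant $\frac{1}{2\pi}\sum_n a_n$, so the $L^2_{t,x}(\T^2)$ norm equals $\|a_n\|_{\ell^1_n}$; your choice $\ga_0=P_{\le N}$ is the special case $a_n=1$ for $|n|\le N$.
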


\begin{proposition}[Orthonormal version of \eqref{eq:L6}]
\label{thm:Nakamura paraphrase}
	Let $\si\in(0,1/3]$. If $1/\al > 1-\si$, we have
	\begin{equation}\label{eq:OStri 1D torus paraphrase}
		\|\rh(U(t)P_{\le N}\ga_0 P_{\le N} U^*(t))\|_{L^3_{t,x}(\T^2)} \ls_\si N^\si \|\ga_0\|_{\FS^\al}
	\end{equation}
	for any $\ga_0 \in \FS^\al$.
	Moreover, this is almost sharp in the sense that \eqref{eq:OStri 1D torus paraphrase} fails if $1/\al < 1-\si$.	
\end{proposition}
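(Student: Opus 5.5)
The plan is to prove the positive part by duality combined with complex interpolation between an $\al=1$ endpoint and an $\al=2$ (Hilbert--Schmidt) endpoint, and to prove the sharpness by testing on the projection $\ga_0=P_{\le N}$.

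\emph{Duality reformulation.} By the duality principle (Lemma \ref{duality_lemma} style), and since $\ga_0\mapsto \rh(U(t)P_{\le N}\ga_0P_{\le N}U^*(t))$ is linear, estimate \eqref{eq:OStri 1D torus paraphrase} is equivalent to
\begin{equation*}
\|W^{1/2}U(t)P_{\le N}\|^2_{\FS^{2\al'}(L^2_x\to L^2_{t,x})}\ls N^{\si}\|W\|_{L^{3/2}_{t,x}(\T^2)}
\end{equation*}
for all $W\ge0$, where $\al'$ is the dual exponent. I will establish the two endpoints below, then run Stein's analytic interpolation on the family $W^{z}$ with $\mathrm{Re}\, z$ between $0$ and $3/2$. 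This standard Frank--Sabin argument for Schatten bounds of $W^{z}T$ should give, for $1/\al=(1-\th)+\th/2$, a loss $N^{\th/2+\ep}$.

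\emph{Endpoint $\al=1$.} Apply the triangle inequality and \eqref{eq:L6}:
\begin{equation*}
\|\rh\|_{L^3}\le\sum a_n\|U P_{\le N}\phi_n\|_{L^6}^2\ls_\ep N^{2\ep}\|\ga_0\|_{\FS^1}.
\end{equation*}

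\emph{Endpoint $\al=2$.} Let $T=W^{1/2}UP_{\le N}$. Then $TT^*$ has kernel $W^{1/2}(t,x)K_N(t-t',x-x')W^{1/2}(t',x')$ with $K_N=\frac1{2\pi}\sum_{|n|\le N}e^{inx-in^2t}$. Hence $\|T\|_{\FS^4}^4=\|TT^*\|_{\FS^2}^2=\iint W(z)|K_N(z-z')|^2W(z')$, and by H\"older and Young this is
\begin{equation*}
\le\|W\|_{L^{3/2}}^2\,\||K_N|^2\|_{L^{3/2}}=\|W\|_{L^{3/2}}^2\,\|K_N\|_{L^3}^2.
\end{equation*}
Write $K_N=U(t)P_{\le N}\de$. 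Interpolating $\|K_N\|_{L^2}\sim N^{1/2}$ with $\|K_N\|_{L^6}\ls N^{1/2+\ep}$ (from \eqref{eq:L6}) gives $\|K_N\|_{L^3}\ls N^{1/2+\ep}$. Thus $\|T\|_{\FS^4}^2\ls N^{1/2+\ep}\|W\|_{L^{3/2}}$, which is the case $1/\al=1/2$ with $\si=1/2+$.

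\emph{Interpolation.} Between these endpoints the relation is linear, namely $\si=1-1/\al$ up to $\ep$. Since $1/\al>1-\si$ is strict, the $\ep$-losses are absorbed, giving \eqref{eq:OStri 1D torus paraphrase} for all $\si\in(0,1/2)$, and in particular for $\si\in(0,1/3]$. The main obstacle is carrying out the analytic interpolation rigorously: checking the imaginary-power bounds at both endpoints and handling the $N^{\ep}$ losses. The duality reformulation itself is routine.

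\emph{Sharpness.} Take $\ga_0=P_{\le N}$. Then $\|\ga_0\|_{\FS^\al}=(2N+1)^{1/\al}$, and $U P_{\le N}\ga_0P_{\le N}U^*=P_{\le N}$ has the constant density $(2N+1)/(2\pi)$. Its $L^3_{t,x}$ norm is therefore $\sim N$. Estimate \eqref{eq:OStri 1D torus paraphrase} would force $N\ls N^{\si+1/\al}$ as $N\to\I$, which fails whenever $1/\al<1-\si$.
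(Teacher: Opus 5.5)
Your proof is correct. It shares the $\al=1$ endpoint (triangle inequality plus \eqref{eq:L6}) with the paper. Your sharpness example $\ga_0=P_{\le N}$, whose evolved density is the constant $(2N+1)/(2\pi)$, is the natural one: the paper defers necessity to Nakamura, and it uses the same constant-density mechanism in its ill-posedness example for Proposition \ref{prop:cubic NLSS wellposed}.

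The genuine difference is the second endpoint. The paper works at $(\al,\si)=(3/2,1/3)$. It gets there by interpolating the $\FS^1\to L^2_{t,x}$ bound coming from \eqref{eq:L4} with the pointwise bound $\|\rh(P_{\le N}AP_{\le N})\|_{L^\I}\ls N\|A\|_{\CB}$ of Lemma \ref{lem:pointwise}. That bound is nothing more than the trace identity $\Tr(P_{\le N}VP_{\le N})\sim N\|V\|_{L^1}$ for $V\ge 0$. You instead work at $(\al,\si)=(2,1/2+\ep)$, using a $TT^*$ Hilbert--Schmidt computation, Young's inequality, and an $L^3_{t,x}$ bound on the truncated Schr\"odinger kernel.

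The paper's route is more elementary and loses no $N^\ep$ at its endpoint. Yours needs a kernel estimate, but its endpoint lies further out on the same critical line $\si=1-1/\al$. It therefore gives the estimate for every $\si<1/2$, beyond the stated range $\si\le 1/3$. Your $\ep$-loss at $\al=2$ is also avoidable. Applying \eqref{eq:L4} to $P_{\le N}\de$ gives $\|K_N\|_{L^4}\ls N^{1/2}$, and interpolating with $\|K_N\|_{L^2}\sim N^{1/2}$ yields $\|K_N\|_{L^3}\ls N^{1/2}$.

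The step you flag as the main obstacle is routine, and no analytic family $W^z$ is needed. Both endpoints have the same target space $L^3_{t,x}$. So plain complex interpolation of the linear map $\ga_0\mapsto\rh(U(t)P_{\le N}\ga_0P_{\le N}U^*(t))$, using $[\FS^1,\FS^2]_\th=\FS^\al$ with $1/\al=1-\th/2$, gives the constant $N^{\th/2+O(\ep)}$ directly. This is exactly how the paper combines its own two endpoints. Your restriction to $W\ge 0$ in the duality step is harmless, since a complex $W$ splits into four nonnegative pieces of no larger $L^{3/2}$ norm. Hence the $\al=2$ bound does hold for all $\ga_0\in\FS^2$, as the interpolation requires.
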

\begin{remark}
Let us stress that Proposition \ref{thm:Nakamura paraphrase} is a special case (corresponding to $d=1$) of a more general result proved by Nakamura in \cite{Nak20} (see also the recent preprint \cite{WWZ25+}). We give only the one dimensional case so that we can contrast our result for the renormalised density we will introduce in Section \ref{SEC:Renorm}.
\end{remark}
\begin{remark}
Let us remark that Propositions \ref{prop:L2 OStri 1D torus paraphrase} and \ref{thm:Nakamura paraphrase} are both reasonably simple to prove. Indeed, we state them primarily so that we can contrast the estimates for the density and the renormalised density.
\end{remark}
\begin{remark}
Since the statements of Propositions \ref{prop:L2 OStri 1D torus paraphrase} and \ref{thm:Nakamura paraphrase} are given in terms of the density function, it is not immediately clear that they are indeed periodic analogues of \eqref{eq:OStri}. However, recalling that
\begin{equation*}
\rh(U(t) \ga_0 U^*(t)) = \sum_{n=1}^\I a_n |U(t)\phi_n|^2,
\end{equation*}
and that 
\begin{equation*}
\|\gamma_0\|_{\FS^\alpha} = \left\|\sum_{n=1}^\infty a_n |\phi_n\rangle \langle \phi_n|\right\|_{\mathfrak{S}^{\alpha}} = \|a_n\|_{\ell_n^\alpha},
\end{equation*}
it follows that they are generalisations of \eqref{eq:OStri}. 
\end{remark}

\subsubsection{Orthonormal Strichartz estimates for the renormalised density}
\label{SEC:Renorm}
In this section, we introduce the renormalisation procedure we will use throughout the paper. We begin with the following definition.
\begin{definition}[Renormalised density function]
Let $A:L^2(\T)\to L^2(\T)$ be a trace-class operator. 
We define the renormalised density function $\vrh_A = \vrh(A)$ by
\begin{equation*}
	\vrh_A(x) := \rh_{A}(x) - \frac{1}{2\pi}\Tr A  \equiv \rh_A(x) - \frac{1}{2\pi}\int_{\T} d x' \rh_A(x').
\end{equation*}
\end{definition}
\begin{remark}
Let us note that we call $\vrh_A$ ``renormalised'' because the quantity $ \Tr A$ is formally infinite if $A$ is not trace-class. We also note that the renormalised density is real-valued for self-adjoint $A \in \FS^1$.
\end{remark}
We can now state our orthonormal Strichartz estimates for the renormalised density. First we have the following orthonormal version of \eqref{eq:L4}.
\begin{theorem}[$L^2$ estimate for the renormalised density]\label{thm:L2 bound renormalised}
	Let $d=1$. If $\al \le 2$, we have
	\begin{equation}\label{eq:L2 bound renormalised}
		\|\vrh(U(t)\ga_0 U^*(t))\|_{L^2_\tx(\T^2)} \ls \|\ga_0\|_{\FS^\al}
	\end{equation}
	for any $\ga_0 \in \FS^\al$.
	This is sharp in the sense that \eqref{eq:L2 bound renormalised} fails if $\al > 2$.
\end{theorem}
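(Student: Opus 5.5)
The plan is to compute $\vrh(U(t)\ga_0U^*(t))$ explicitly in Fourier variables and apply Parseval in both $t$ and $x$. The $L^2_{t,x}$ norm should reduce \emph{exactly} to the Hilbert--Schmidt norm of the off-diagonal part of the matrix of $\ga_0$. Both the bound and its sharpness then become statements about matrices.

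\textbf{Fourier computation for trace-class $\ga_0$.} Write $\ga_0$ in the basis $(e_n)$ with matrix entries $\wh\ga_0(n,m):=\lg e_n,\ga_0 e_m\rg$. For trace-class $\ga_0$, Lemma \ref{duality_lemma} and linearity give
\begin{equation*}
\rh(U(t)\ga_0U^*(t))(x)=\frac{1}{2\pi}\sum_{n,m\in\Z}\wh\ga_0(n,m)\,e^{-it(n^2-m^2)}e^{i(n-m)x}.
\end{equation*}
The $x$-frequency $k=0$ coefficient is $\frac1{2\pi}\sum_m\wh\ga_0(m,m)=\frac{1}{2\pi}\Tr\ga_0$, which is independent of $t$ since $U$ is unitary. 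Hence the renormalisation removes exactly the $k=0$ mode, and
\begin{equation*}
\vrh(U(t)\ga_0U^*(t))(x)=\frac{1}{2\pi}\sum_{k\neq0}e^{ikx}\sum_{m\in\Z}\wh\ga_0(m+k,m)\,e^{-itk(2m+k)}.
\end{equation*}

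\textbf{The key observation.} For fixed $k\ne0$, the time frequencies $k(2m+k)$ are pairwise distinct integers as $m$ ranges over $\Z$. Parseval on $\T_t\times\T_x$ therefore gives
\begin{equation*}
\|\vrh(U(t)\ga_0U^*(t))\|_{L^2_{t,x}(\T^2)}^2 = \sum_{k\neq0}\sum_{m}|\wh\ga_0(m+k,m)|^2=\sum_{n\ne m}|\wh\ga_0(n,m)|^2\le\|\ga_0\|_{\FS^2}^2 .
\end{equation*}
Without renormalisation this fails precisely because at $k=0$ all the frequencies collapse to $0$, which is why Proposition \ref{prop:L2 OStri 1D torus paraphrase} needs $\al=1$.

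\textbf{Extension and sharpness.} The inequality $\|\cdot\|_{\FS^2}\le\|\cdot\|_{\FS^\al}$ for $\al\le2$, together with density of finite-rank operators, yields \eqref{eq:L2 bound renormalised} for all $\al\le 2$. When $\ga_0\in\FS^\al\setminus\FS^1$ the left side is interpreted as the $L^2$-limit of the renormalised densities of the approximants. The isometry identity above shows this limit is well defined: renormalised densities of finite-rank truncations converge in $L^2_{t,x}$, consistent with Remark \ref{remark_non_trace}.

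For sharpness with $\al>2$, the identity shows it suffices to find operators whose off-diagonal Hilbert--Schmidt norm is large compared with their $\FS^\al$ norm. I will take the normalised discrete Fourier matrix on $\{-N,\dots,N\}$:
\begin{equation*}
\wh\ga_0(n,m)=(2N+1)^{-1/2}e^{2\pi i nm/(2N+1)}\quad\text{for }|n|,|m|\le N,
\end{equation*}
and $\wh\ga_0(n,m)=0$ otherwise. This operator is unitary on the range of $P_{\le N}$, so it has $2N+1$ singular values equal to $1$ and $\|\ga_0\|_{\FS^\al}\sim N^{1/\al}$. On the other hand, all $(2N+1)^2$ entries have modulus $(2N+1)^{-1/2}$, so the off-diagonal sum is $\sim N$. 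The left-hand side of \eqref{eq:L2 bound renormalised} is therefore $\sim N^{1/2}$, and $N^{1/2-1/\al}\to\infty$ when $\al>2$. If self-adjoint data are preferred, one can instead use $\ga_0+\ga_0^*$ or the Hermitian DFT-type matrix $\cos(2\pi nm/(2N+1))$ with the same norms up to constants.

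The only genuinely delicate point is the frequency-separation observation in the second step. Everything else is bookkeeping, plus the modest care needed to make sense of $\vrh$ for non-trace-class $\ga_0$.
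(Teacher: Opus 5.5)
Your proof is correct. It is the direct Fourier-side counterpart of the paper's duality argument, and it rests on the same key observation.

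\textbf{The paper's route.} The paper never computes the density directly. It tests $\vrh$ against mean-zero $V$ (Lemma \ref{duality_zero_mean_lemma}) and rewrites the pairing as $\Tr\bigl[\int_\T U^*(t)V(t)U(t)\,dt\,\ga_0\bigr]$. It then proves Lemma \ref{lem:L2 dual estimate}, namely $\|\int_\T U^*(t)V(t)U(t)\,dt\|_{\FS^2}\ls\|V\|_{L^2_{t,x}}$. That proof reads off the matrix entries $\wt V(m^2-n^2,n-m)$ and uses that $(n,m)\mapsto(m^2-n^2,n-m)$ is injective off the diagonal. On the diagonal everything collapses to $\wt V(0,0)$, which is killed by $\lg V\rg=0$. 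This is exactly your observation that $m\mapsto k(2m+k)$ is injective for $k\neq0$, with the $k=0$ collapse removed by the renormalisation.

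\textbf{What your route gains.} You get the exact identity $\|\vrh(U(t)\ga_0U^*(t))\|_{L^2_{t,x}}^2=\sum_{n\ne m}|\wh\ga_0(n,m)|^2$. This gives the bound with constant $1$ and makes the extension in Remark \ref{remark_non_trace} transparent. It also reduces sharpness to exhibiting large off-diagonal Hilbert--Schmidt mass. Your DFT matrix works. So does the paper's example $\sum_{n\le N}|\phi_n\rg\lg\phi_n|$ with $\phi_n=2^{-1/2}(e_n+e_{-n})$, whose off-diagonal mass sits on the anti-diagonal $\wh\ga_0(n,-n)=1/2$. That example has the advantage of being a positive operator, the relevant class for the NLS system.

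\textbf{What the paper's route gains.} The dual form is the one reused in Section \ref{SEC:UV:STRICHARTZ}. In Lemma \ref{lem:averaged Strichartz V} and Corollary \ref{cor:Stri V renormalised} the free propagator is replaced by $U_V$, which is not diagonal in the Fourier basis, so no Parseval identity is available. There one needs the operator bound on $\int U^*fU$ for mean-zero $f$, combined with the time-truncation results of \cite{BHS25}.
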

\begin{remark}
\label{remark_non_trace}
We note that in Theorem \ref{thm:L2 bound renormalised}, we state \eqref{eq:L2 bound renormalised} holds for all $\ga_0 \in \FS^2$. However, recall that we only defined $\vrh_A$ for $A \in \FS^1$. So $\vrh_A$ with $A \in \FS^2\setminus \FS^1$ is not currently defined. We thus interpret Theorem \ref{thm:L2 bound renormalised} in the following way. Consider the linear map $T:\FS^1 \to L^2_\tx(\T^2)$ defined by $T\ga_0 := \vrh(U(t)\ga_0 U^*(t))$. Since $\FS^1$ is dense in $\FS^2$, and since the inequality \eqref{eq:L2 bound renormalised} holds for all $\ga_0 \in \FS^1$, there is a unique bounded extension $\wt{T}:\FS^2 \to L^2_\tx(\T^2)$ of $T$. We define $\vrh(U(t)\ga_0 U^*(t)):= \wt{T}\ga_0$ for all $\ga_0 \in \FS^2$.
\end{remark}
We also have the following orthonormal analogue of \eqref{eq:L6}.

	\begin{theorem}[$L^3$ estimate for the renormalised density]\label{thm:L3 bound renormalised}
	Let $d=1$.
	If $\alpha \in [3/2,3]$ and $\sigma > 2/3 -1/\alpha$, we have
	    \begin{equation}\label{eq:L3 bound renormalised}
	       \|\vrh(U(t)P_{\le N}\ga_0 P_{\le N} U^*(t))\|_{L^3_\tx(\T^2)}
	        \ls_\si N^\si \|\ga_0\|_{\FS^\al}
        \end{equation}
holds for any $\ga_0 \in \FS^\al$.
	This is (almost) sharp in the sense that \eqref{eq:L3 bound renormalised} fails if $\si < 2/3 - 1/\al$. 
\end{theorem}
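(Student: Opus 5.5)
Write $\ga_0$ in Fourier variables, $\ga_0 = \sum_{k,l} \wh{\ga}_0(k,l) |e_k\rg\lg e_l|$, so that
\begin{equation*}
\rh(U(t)\ga_0U^*(t))(x) = \frac{1}{2\pi}\sum_{k,l} \wh{\ga}_0(k,l)\, e^{i(k-l)x} e^{-it(k^2-l^2)} .
\end{equation*}
Subtracting the mean removes exactly the diagonal $k=l$. Hence $\vrh$ is the sum over $k\neq l$, and its space-time Fourier coefficient at frequency $(\tau,\xi)=(k^2-l^2,k-l)$ is $\sum \wh\ga_0(k,l)$ over the solutions $(k,l)$. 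For $\xi = k-l \ne 0$, the pair $(\tau,\xi)$ determines $k+l = \tau/\xi$, so the map $(k,l)\mapsto (k^2-l^2,k-l)$ is injective on $\{k\neq l\}$.

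\textbf{$L^2$ bound.} By Plancherel on $\T^2$ and the injectivity,
\begin{equation*}
\|\vrh\|_{L^2_{t,x}}^2 \sim \sum_{k\ne l}|\wh\ga_0(k,l)|^2 \le \|\ga_0\|_{\FS^2}^2 .
\end{equation*}
This gives \eqref{eq:L2 bound renormalised} for $\al\le 2$, first on $\FS^1$ and then by density as in Remark \ref{remark_non_trace}. For sharpness with $\al>2$, take $\ga_0 = \sum_{|k|,|l|\le N} c_{kl}|e_k\rg\lg e_l|$ with $c_{kl}$ a unitary-type matrix normalised so that $\|\ga_0\|_{\FS^\infty}\sim1$. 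A concrete choice is $\ga_0 = N^{-1/2}F$, where $F$ is the discrete Fourier matrix on $\{-N,\dots,N\}$. Then $\|\ga_0\|_{\FS^\al}\sim N^{1/\al-1/2}$, while the left-hand side is $\sim (N^2\cdot N^{-1})^{1/2}=N^{1/2}$. The ratio $N^{1-1/\al}$ is compatible with the $\FS^2$ bound, so this example is not sharp and I would adjust it.

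The correct test is any operator whose Hilbert–Schmidt norm is carried off the diagonal. A projection of rank $M$ has $\FS^\al$ norm $M^{1/\al}$ and HS norm $M^{1/2}$. If most of its entries lie off the diagonal (a randomly rotated projection onto an $M$-dimensional subspace of $P_{\le N}L^2$ with $M\sim N/2$), the left-hand side is $\sim M^{1/2}$. The resulting ratio $M^{1/2-1/\al}\to\infty$ when $\al>2$.

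\textbf{$L^3$ bound.} I would interpolate. At the endpoint $\al=3/2$, $\si>0$, I want
\begin{equation*}
\|\vrh\|_{L^3}\ls N^{\si}\|\ga_0\|_{\FS^{3/2}} .
\end{equation*}
At the endpoint $\al=3$, $\si>1/3$, I want $\|\vrh\|_{L^3}\ls N^{1/3+}\|\ga_0\|_{\FS^3}$. Complex interpolation of the bilinear-in-frequency linear map $\ga_0\mapsto\vrh$ then gives the line $\si>2/3-1/\al$.

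The $\al=3$ endpoint follows from the $L^2$ bound combined with Bernstein/Sobolev in space-time. The function $\vrh$ has spatial frequencies $\lesssim N$ and temporal frequencies $\lesssim N^2$, so parabolic Bernstein gives $L^2\to L^3$ with a loss of $N^{(1+2)(1/2-1/3)}=N^{1/2}$, which is too much. Instead I would use the standard $L^3$ duality: pair $\vrh$ with $V\in L^{3/2}_{t,x}$ and write
\begin{equation*}
\int V\vrh=\Tr\big(\ga_0 \int U^*(t) \wt V(t)U(t)\,dt\big),
\end{equation*}
where $\wt V$ is $V$ with its $x$-mean removed at each time. This reduces the problem to $\FS^{\al'}$ bounds on $P_{\le N}\int U^*\wt V U\,dt\,P_{\le N}$. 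At $\al'=3/2$ I would bound the $\FS^2$ norm via the Fourier coefficient count: the number of $(k,l)$ with $k\neq l$ hitting a given $(\tau,\xi)$ is at most $1$. Combining this with the Hausdorff–Young-type estimate $\|\wh V\|_{\ell^3}\ls\|V\|_{L^{3/2}}$ and the restriction $|k|,|l|\le N$ should give the $\FS^3$ endpoint bound with a loss of $N^{1/3+}$ coming from Hölder over the $O(N^2)$ frequencies. The $\FS^{3/2}$ (equivalently $\al=3/2$) endpoint is where Nakamura's Proposition \ref{thm:Nakamura paraphrase} already gives loss $N^{\si}$ for $1/\al>1-\si$, i.e. any $\si>1/3$ at $\al=3/2$. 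That is not enough, so I would need a genuinely better bound there using the removal of the diagonal. I expect this $\al=3/2$ endpoint to be the main obstacle. My first attempt would be a Schur/divisor-counting bound for the $\FS^3$ norm of the dual operator via $\|B\|_{\FS^4}^4=\Tr(B^*B)^2$ expanded in Fourier, where the off-diagonal constraint kills the resonant terms and divisor bounds give the $N^\ep$ loss.

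For sharpness with $\si<2/3-1/\al$, I would test on the rank-$M$ projection example ($M\sim N$, generic off-diagonal structure). Its $\vrh$ is a square-root-cancellation sum of $N^2$ terms, so $\|\vrh\|_{L^3}\gtrsim N^{2/3}\cdot N^{-1/2}\cdot$(normalisation). This is to be compared with $N^{1/\al}$, which yields the threshold $2/3-1/\al$.
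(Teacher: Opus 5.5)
You have the same architecture as the paper: interpolate an $\al=3$ bound with loss $N^{1/3}$ against an $\al=3/2$ bound with loss $N^{\ep}$, which gives the line $\si=2/3-1/\al$. Your direct Plancherel proof of the $L^2$ ingredient, using injectivity of $(k,l)\mapsto(k^2-l^2,k-l)$ on $k\neq l$, is correct. The gap is that the $\al=3/2$, $N^\ep$ endpoint carries the whole content of the theorem, and you leave it as an intended attempt.

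Your suggested route also aims at the wrong quantity. Write $B_V:=\PN\int U^*(t)\wt V(t)U(t)\,dt\,\PN$. You need $\|B_V\|_{\FS^3}$ for $V\in L^{3/2}$, and an $\FS^4$ computation does not give that. Reaching $\FS^3$ by interpolating against the $\FS^2$--$L^2$ bound would require $\|B_V\|_{\FS^4}\ls N^\ep\|V\|_{L^{4/3}}$. That is dual to $\|\vrh\|_{L^4}\ls N^\ep\|\ga_0\|_{\FS^{4/3}}$, which already fails for rank-one data, since $L^8_{t,x}$ Strichartz on $\T$ loses $N^{1/8}$.

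The missing idea is the bilinear estimate
\begin{equation*}
\|\vrh(U(t)\PN\ga_0\PN U^*(t))\,U(t)\PN f\|_{L^2_{\tx}}\ls_\ep N^\ep\|\ga_0\|_{\FS^2}\|f\|_{L^2_x}.
\end{equation*}
A space-time frequency $(\om,k)$ is hit by triples $(m,n,l)$ with $m\neq n$, $k=m-n+l$ and $\om-k^2=2(m-n)(n-l)$.
\begin{itemize}
\item In the resonant case $\om=k^2$, the constraint $m\neq n$ forces $n=l$; this is exactly where renormalisation enters. Cauchy--Schwarz then bounds that part by $\|\ga_0\|_{\FS^2}^2\|f\|_{L^2}^2$.
\item Otherwise the divisor bound leaves $O(N^\ep)$ triples per frequency.
\end{itemize}
From this you get $\|T_N(\ga_0)T_N(\ga_1)T_N(\ga_2)\|_{L^1}\ls N^\ep\|\ga_0\|_{\FS^1}\|\ga_1\|_{\FS^2}\|\ga_2\|_{\FS^2}$. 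To see it, dualise the $\rh$-part of $T_N(\ga_0)$ against $\ga_0$, using the two $L^2_x\to L^2_{\tx}$ bounds for $T_N(\ga_j)U(t)\PN$, and handle the constant $\Tr$-part with the $L^2$ theorem. Together with its permutations, multilinear interpolation then puts $\FS^{3/2}$ in every slot.

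The other two ingredients are also wrong as written.

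For $\al=3$, your Hausdorff--Young plus H\"older count only controls $\|B_V\|_{\FS^2}\ls N^{1/3}\|V\|_{L^{3/2}}$. Pairing with $\ga_0\in\FS^3$ needs $\|B_V\|_{\FS^{3/2}}$, and passing from $\FS^2$ to $\FS^{3/2}$ on the $(2N+1)$-dimensional range of $\PN$ costs another $N^{1/6}$. You end with $N^{1/2}$, not $N^{1/3}$. The paper instead interpolates the $L^2$ bound with the trivial pointwise bound $\|\vrh(\PN A\PN)\|_{L^\infty}\ls N\|A\|_{\CB(L^2)}$. This follows from Lemma~\ref{lem:pointwise} together with $|\Tr(\PN A\PN)|\le(2N+1)\|A\|$.

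For sharpness, a generic rank-$N$ projection produces square-root cancellation. Then $\|\vrh\|_{L^3}\sim\|\vrh\|_{L^2}\sim N^{1/2}$, which only forces $\si\ge 1/2-1/\al$. To reach $2/3-1/\al$ you need $\vrh$ to concentrate. Take $\phi_n=2^{-1/2}(e_n+e_{-n})$ with $a_n=1$ for $1\le n\le N$. Then $\vrh=\frac{1}{2\pi}\sum_{n=1}^N\cos(2nx)=\frac{1}{4\pi}\big(\frac{\sin((2N+1)x)}{\sin x}-1\big)$ has size $N$ on $|x|\ls 1/N$. This gives $\|\vrh\|_{L^3}\gtrsim N^{2/3}$ while $\|\ga_0\|_{\FS^\al}=N^{1/\al}$.
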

\begin{remark}
	Let us briefly compare Theorem \ref{thm:L3 bound renormalised} with Proposition \ref{thm:Nakamura paraphrase}. When $0 < \si < 1/3$,  Theorem \ref{thm:L3 bound renormalised} allows $\al$ such that $1/\al > 2/3 - \si$, which is much better than $1/\al > 1-\si$ in Proposition \ref{thm:Nakamura paraphrase}.
\end{remark}

%

\subsubsection{Higher dimensional case}
In this paper, we mainly analyse the case of the one dimensional torus $X=\T$. However, in Section \ref{sec:higher dimensional case}, we introduce a renormalised density in $\T^d$ for $d\ge 2$ and consider the orthonormal Strichartz estimates for the renormalised density. We give a strong necessity condition for the estimate to hold. Moreover, we give an alternative proof of \cite[Theorem 1.4]{Nak20}. 

\subsection{Applications to NLS systems}
\label{SEC:NLSS:introduction}
We now return to the problem of systems of NLS equations on $\T$ presented in Section \ref{SEC:NLSS:1}. Recall from the introduction the cubic NLS system given by
\begin{equation}\label{eq:NLSS} \tag{NLSS}
	\begin{dcases}
		i\pl_t \ga = [-\De \pm \rh_\ga, \ga], \quad \ga:[0,T] \to \CB(L^2(\T)),&\\
		\ga(0)=\ga_0 \in \CB(L^2(\T)).
	\end{dcases}
\end{equation}
We note that in the particular case that $u$ solves the standard cubic NLS given by
\begin{equation*}
i \partial_t u + \De u = \pm |u|^2u,
\end{equation*}
$\ga(t)=|u(t)\rg \lg u(t)|$ solves \eqref{eq:NLSS}. So \eqref{eq:NLSS} can be seen as a natural generalisation of the cubic NLS to many-particle systems. Given this formulation, a natural question is what is the largest choice of $\alpha$ for which \eqref{eq:NLSS} has a solution. In the case that $\alpha = 1$, this is a simple application of local well-posedness of the (one-body) cubic NLS in \cite{Bou93} and the triangle inequality. We direct the reader for example to \cite{CU24} for such a problem. In recent years, the study of such a question for $\alpha > 1$ has attracted a lot of attention, especially in the context of the application of the orthonormal Strichartz estimates.  For example, see \cite{LS14, LS15, CHP17, CHP18, Had25, HH25, HY26, Hos24, Hos25AHP, Hos25, Hos25PAA, BHS25, PS21, NY25, NY25SIAM, CU24, Dong21}.

Our main application concerns the well-posedness of \eqref{eq:NLSS}. First, we have the following result for when we do not renormalise the density.
\begin{theorem}[Optimal well-posedness for the cubic NLS system]\label{thm:cubic optimal well posedness}
	We have that \eqref{eq:NLSS} is globally well-posed in $\FS^1$.
	If $\al>1$, \eqref{eq:NLSS} is ill-posed in $\FS^\al$ in the sense that the solution map is discontinuous.  
\end{theorem}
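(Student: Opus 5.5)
We split the statement into global well-posedness in $\FS^1$ and ill-posedness in $\FS^\al$ for $\al>1$.

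\textbf{Well-posedness in $\FS^1$.} We write $\ga_0=\sum_n a_n|\phi_n\rg\lg\ps_n|$ with $(a_n)\in\ell^1$, and reduce \eqref{eq:NLSS} to the system of cubic NLS equations \eqref{NLSS_introduction} coupled through $\rh=\sum_m a_m u_m\ov{v_m}$. For a general trace-class $\ga_0$ we evolve both $\phi_n$ and $\ps_n$ by the same linear flow $-\De\pm\rh$.

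We then run Bourgain's $X^{0,3/8}$ contraction argument from \cite{Bou93} in the space $\ell^1_a X^{0,b}$. The underlying trilinear bound is
\[
\|u_m\ov{v_m}w\|_{X^{0,-b'}}\ls\|u_m\|_{X^{0,b}}\|v_m\|_{X^{0,b}}\|w\|_{X^{0,b}},
\]
which follows from \eqref{eq:L4}. Summing against $a_m$ with the triangle inequality gives the estimate
\[
\|\rh w\|\ls\Big(\sum_m|a_m|\Big)\sup_m\|u_m\|\|v_m\|\,\|w\|,
\]
and this closes the contraction on a time interval depending only on $\|\ga_0\|_{\FS^1}$.

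To globalise we use conservation of the trace norm. Since $\ga(t)=W(t)\ga_0W(t)^*$ with $W$ the unitary propagator of $-\De\pm\rh$, the singular values of $\ga(t)$ are conserved, so $\|\ga(t)\|_{\FS^1}=\|\ga_0\|_{\FS^1}$. Iterating the local result therefore gives a global solution. Continuity of the data-to-solution map comes from the contraction.

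\textbf{Ill-posedness for $\al>1$.} The key observation is that in the equation the density enters only through $[\rh_\ga,\ga]$. The constant part of $\rh$ is $\frac1{2\pi}\Tr\ga$, which produces only a phase, and this phase blows up when $\Tr\ga_0$ is large while $\|\ga_0\|_{\FS^\al}$ is small. This mechanism is the one behind the sharpness in Proposition \ref{prop:L2 OStri 1D torus paraphrase}.

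We construct the data as follows. Fix a smooth target $\ga^*$ and let $P_K$ be the orthogonal projection onto the Fourier modes $K<|n|\le 2K$, scaled as $\ep_K P_K$. Taking $\ep_K=K^{-1/\al}\de_K$ gives $\|\ep_K P_K\|_{\FS^\al}\sim\de_K\to0$ while $\Tr(\ep_K P_K)=K^{1-1/\al}\de_K\to\I$. Because $\ep_KP_K$ is translation invariant, its density is the constant $\frac{\ep_K K}{\pi}$, and this stays true under the flow for the unperturbed part. Consider the data $\ga_0^K=\ga^*+\ep_KP_K$, which converge to $\ga^*$ in $\FS^\al$.

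The first step is to show that the solution is approximately
\[
\ga^K(t)\approx e^{\mp i c_K t}\,\wt\ga(t)\,e^{\pm ic_Kt}+\ep_KP_K,
\]
with $c_K=\frac{\ep_K K}{\pi}$. This cannot be right as written, since conjugation by a scalar phase cancels.

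The discontinuity must therefore come instead from the cross terms. The real mechanism is that the multiplier $\rh_{\ga^*}$ acting on the high-frequency block $\ep_KP_K$ is not seen by the constant; rather, the perturbation $\rh_{\ep_K P_K}$ stays constant only as long as $\ga^*$ does not mix with it. I would instead choose $\ga_0^K$ with a non-translation-invariant high-frequency block. Concretely, I would take $\ep_K\sum_{K<|n|\le2K}|e_n+\eta e_{n+1}\rg\lg e_n+\eta e_{n+1}|$ with $\eta$ small. Its density is then $c_K+\eta\,\ep_KK\cos x$ plus small terms, and this has a non-constant part of size $K^{1-1/\al}\de_K\to\I$. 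Under the evolution, this large potential instantly deforms the fixed low-frequency component $\ga^*$ at time of order $1$, while the data tend to $\ga^*$ in $\FS^\al$. This gives discontinuity of the solution map at $\ga^*$.

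\textbf{Main obstacle.} I expect the hardest step to be justifying this approximation for the nonlinear flow. The plan for it is a first-order Duhamel expansion on a short time $t\sim1$, with the error controlled in $\FS^1$-based norms from the well-posedness part. I also need to check that the free Schr\"odinger oscillation $e^{-it(2n+1)}$ does not average the large $\cos x$ potential away. I would handle this by restricting $n$ so that $2n+1$ is fixed modulo the period, or by evaluating at short times $t\sim K^{-1}$ and rescaling $\de_K$ accordingly.
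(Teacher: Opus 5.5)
Your well-posedness half is acceptable, though it takes a different route from the paper. You run Bourgain's $X^{0,b}$ contraction on the orbitals with an $\ell^1$-summed trilinear bound. The paper instead contracts on the density itself, $\Ga[\varrho]=\rh(U_{\pm\varrho}(t)\ga_0U_{\pm\varrho}^*(t))$, in a ball of $L^2_\tx([0,T]\times\T)$, via Corollary \ref{cor:Stri V} and Lemma \ref{lem:difference}. That map is linear in $\ga_0$, so the paper gets Lipschitz dependence in $\FS^1$ directly. Your claim that ``continuity comes from the contraction'' is too quick, because singular vectors do not depend continuously on $\ga_0$; you would need an estimate like Lemma \ref{lem:difference} anyway.

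The genuine gap is in the ill-posedness half. You try to make $\ga_0\mapsto\ga(t)$ discontinuous, and for $1<\al\le 2$ this cannot succeed. For trace-class data, $\Tr\ga(t)=\Tr\ga_0$ and $U_{V+c}(t)=e^{-ict}U_V(t)$. Hence the $\ga$-component of a solution of \eqref{eq:NLSS} is exactly the solution of \eqref{eq:NLSS'}. By Theorem \ref{thm:cubic optimal well posedness renormalised}, the latter is Lipschitz from $E^2_R$ into $C([0,T];\FS^2)$. Since $\|\cdot\|_{\FS^2}\le\|\cdot\|_{\FS^\al}$ for $\al\le2$, your data $\ga_0^K\to\ga^*$ in $\FS^\al$ force $\ga^K\to\ga$ in $C([0,T];\FS^2)$, so no $O(1)$ deformation of $\ga^*$ can occur.

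Your own example shows where the heuristic breaks. For real $\eta$ one has $|U(t)(e_n+\eta e_{n+1})|^2=\frac{1}{2\pi}(1+\eta^2+2\eta\cos(x-(2n+1)t))$. The time frequencies $2n+1$ are distinct, so the non-constant part of the density has $L^2_\tx$ norm $\sim\eta\,\ep_K K^{1/2}=\eta K^{1/2-1/\al}\de_K\to0$. This is exactly what Theorem \ref{thm:L2 bound renormalised} predicts, and this $L^2_\tx$ norm is what controls $U_V$ (Lemmas \ref{lem:Stri V} and \ref{lem:difference}). So your ``main obstacle'' is not technical. No selection of $n$ can beat Theorem \ref{thm:L2 bound renormalised}. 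On times $|t|\ls K^{-1}$, a potential of amplitude $K^{1-1/\al}\de_K$ has cumulative effect $\ls K^{-1/\al}\de_K\to0$, and $\de_K\to0$ is forced by $\ga_0^K\to\ga^*$.

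What you are missing is that a solution (Definition \ref{def:solution}) is the pair $(\ga,\rh_\ga)$. The precise ill-posedness claim, Proposition \ref{prop:cubic NLSS wellposed}(ii), is that the data-to-density map $\ga_0\mapsto\rh_\ga\in L^2_t([0,T];L^2_x(\T))$ has no continuous extension to $E^\al_R$. The phase cancellation you observed removes the constant from $\ga$ but not from $\rh_\ga$. So your discarded translation-invariant construction is already the proof, which is the paper's argument. Take $\ga_N=N^{-1/\al-\ep}\sum_{n=1}^N|e_n\rg\lg e_n|$, a stationary solution with constant density $\frac{1}{2\pi}N^{1-1/\al-\ep}$. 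Then $\|\ga_N\|_{\FS^\al}=N^{-\ep}\to0$, while $\|\rh_{\ga_N}\|_{L^2_\tx([0,T]\times\T)}\sim N^{1-1/\al-\ep}\to\I$. More generally, every solution has $\lg\rh_\ga(t)\rg_x=\frac{1}{2\pi}\Tr\ga_0$, and $\Tr\ga_0$ is unbounded on $\FS^\al$-small positive data when $\al>1$.
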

{\begin{remark}
Let us briefly remark that the global well-posedness of \eqref{NLSS_introduction} with a focusing potential was recently studied in \cite{Ath26}, which appeared shortly after the first version of this work. In \cite{Ath26}, the author proves that \eqref{NLSS_introduction} is globally well-posed in the space $H^1 \FS^1(\T)$, which is the space of operators whose weak derivate are trace-class. The proof proceeds via Fourier-Galerkin methods, and is rather different to the approach in this paper.
\end{remark}}
At least formally, \eqref{eq:NLSS} is equivalent to the following renormalised cubic NLS system
\begin{equation}\label{eq:NLSS'} \tag{RNLSS}
	\begin{dcases}
		i\pl_t \ga = [- \De \pm \vrh_\ga, \ga], \quad \ga:[0,T] \to \CB(L^2(\T)),&\\
		\ga(0)=\ga_0 \in \CB(L^2(\T))
	\end{dcases}
\end{equation}
because $[(2\pi)^{-1}\Tr \ga, \ga] = 0$ if $\ga \in \FS^1$.
However, from the viewpoint of the well-posedness problem, there is a big difference between \eqref{eq:NLSS} and \eqref{eq:NLSS'}. Indeed, we have the following well-posedness theory for the renormalised system.
\begin{theorem}[Optimal well-posedness for the renormalised cubic NLS system]\label{thm:cubic optimal well posedness renormalised}
	If $\alpha \in [1,2]$ then \eqref{eq:NLSS'} is globally well-posed in $\FS^\al$.
    If $\al>2$ then \eqref{eq:NLSS'} is ill-posed in $\FS^\al$ in the sense that the solution map is discontinuous.  
\end{theorem}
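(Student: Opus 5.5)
Both parts of Theorem \ref{thm:cubic optimal well posedness renormalised} rest on Theorem \ref{thm:L2 bound renormalised}. That estimate is the only smoothing-type input we have, so the plan is a contraction argument in $\FS^\al$ driven by it, together with conserved quantities for globality, and an explicit example for the ill-posedness.

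\textbf{Well-posedness, $\al\in[1,2]$.} Since $\FS^\al\subset\FS^2$, it suffices to work with the $\FS^2$ norm plus persistence of $\FS^\al$ regularity. I would pass to the interaction picture $\ga(t)=U(t)\wt\ga(t)U^*(t)$ and write the Duhamel formula
\begin{equation*}
\ga(t)=U(t)\ga_0U^*(t)\mp i\int_0^t U(t-s)[\vrh_{\ga(s)},\ga(s)]U^*(t-s)\,ds.
\end{equation*}
The contraction is run in the space $C([0,T];\FS^\al)$ intersected with $\{\vrh_\ga\in L^2_{t,x}\}$. The needed bounds are as follows.
\begin{enumerate}
\item The linear density estimate comes directly from Theorem \ref{thm:L2 bound renormalised}.
\item The Duhamel term is controlled using the dual (inhomogeneous) form of \eqref{eq:L2 bound renormalised}, obtained by a $TT^*$ and Christ--Kiselev type argument. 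This gives $\|\vrh(\int_0^t U(t-s)R(s)U^*(t-s)ds)\|_{L^2_{t,x}}\ls\int_0^T\|R(s)\|_{\FS^2}ds$.
\item The commutator is estimated by $\|[V,\ga]\|_{\FS^\al}\le 2\|V\|_{L^\I}\|\ga\|_{\FS^\al}$.
\end{enumerate}

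The $L^\I$ norm in the third bound is not available from $L^2_{t,x}$ control, and this is the main obstacle. To get around it, I would instead use the bound $\|V\ga\|_{\FS^2}\le\|V\|_{L^2_x}\|\ga\|_{\FS^2\to L^\I}$, which is unusable in general. A better route is to note that on $\T$ the constant mode has been removed. One can then use Hilbert--Schmidt duality, $\Tr(W[V,\ga])=\Tr([W,V]\ga)$, and estimate the Duhamel term's density via a bilinear version of Theorem \ref{thm:L2 bound renormalised} in Fourier variables. There $\wh{\vrh}(t,k)$ for $k\neq0$ is a sum over $n$ of $\wh\ga(n+k,n)e^{-it((n+k)^2-n^2)}$, and time integration gives orthogonality in $k(2n+k)$. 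I expect the estimate to close for short time $T$ depending on $\|\ga_0\|_{\FS^2}$.

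\textbf{Globality.} The flow is unitary conjugation $\ga(t)=W(t)\ga_0W^*(t)$, so every $\FS^\al$ norm is conserved. The local time depends only on $\|\ga_0\|_{\FS^\al}$, and iterating gives global well-posedness. For $\al=1$ the equivalence with \eqref{eq:NLSS} and Theorem \ref{thm:cubic optimal well posedness} are consistent with this.

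\textbf{Ill-posedness, $\al>2$.} I would take $\ga_0^{(N)}=\sum_{|n|\le N}a_n|e_n\rg\lg e_n|$ plus a small perturbation, with $(a_n)\in\ell^\al\setminus\ell^2$ truncated. Translation-invariant states are stationary, since $\vrh=0$ for them. Next I would perturb by $\ep(|e_{n+k}\rg\lg e_n|+\mathrm{h.c.})$-type terms and compute the first Picard iterate. The sharpness example behind Theorem \ref{thm:L2 bound renormalised} shows that the renormalised density of such data, with mass $\|a\|_{\ell^2}\to\I$, is of size $\sim\|a\|_{\ell^2}$ in $L^2_{t,x}$. The resulting second-order term then has norm $\gtrsim\ep^2\|a\|_{\ell^2}^{2}\to\I$, while the data converge in $\FS^\al$. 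A standard argument in the style of Bourgain and Kishimoto converts this Picard-iterate blowup into discontinuity of the solution map at the stationary state.
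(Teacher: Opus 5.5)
Your well-posedness argument has a genuine gap, and it is the one you flag yourself. A Duhamel contraction in $C([0,T];\mathfrak{S}^\alpha)$ needs control of $[V(s),\gamma(s)]$ when $V=\overline{\rho}_\gamma$ is only in $L^2_{t,x}$ and $\gamma(s)$ is only known to lie in $\mathfrak{S}^2$. No such bound exists: already $\|V\gamma\|_{\mathfrak{S}^2}^2=\int_{\mathbb{T}}|V|^2\rho(\gamma\gamma^*)\,dx$, with $\rho(\gamma\gamma^*)$ merely in $L^1$.

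Your repair does not supply the estimate either. For a multiplication operator $W$, the identity $\operatorname{Tr}(W[V,\gamma])=\operatorname{Tr}([W,V]\gamma)$ is vacuous, because $[W,V]=0$. The relevant test operator is $\int_s^T U^*(t-s)f(t)U(t-s)\,dt$. Bounding its pairing with $[V(s),\gamma(s)]$ for a generic $\gamma(s)\in\mathfrak{S}^2$ is exactly what ``I expect the estimate to close'' leaves open.

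The missing idea is never to treat $\gamma(s)$ as a generic $\mathfrak{S}^2$ operator. The paper defines a solution by $\gamma(t)=U_{\pm\varrho}(t)\gamma_0U_{\pm\varrho}^*(t)$, where $U_V$ is the unitary propagator of $i\partial_t+\Delta-V$ for real $V\in L^2_{t,x}$ (Appendix \ref{sec:appendix}). It then runs the contraction for the density alone, $\varrho\mapsto\overline{\rho}(U_{\pm\varrho}(t)\gamma_0U_{\pm\varrho}^*(t))$ in $L^2_{t,x}$. This reduces everything to three linear estimates for $U_V$:
\begin{enumerate}
\item The $L^4$ Strichartz bound (Lemma \ref{lem:Stri V}). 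It lets one write $V=V_0V_1$ with $|V_0|=|V_1|=|V|^{1/2}\in L^4_{t,x}$, and absorb each factor through an $L^2_x\to L^2_{t,x}$ bound for $V_jU(t)$ or $V_jU_V(t)$.
\item The renormalised density bound for $U_V$ (Lemma \ref{lem:averaged Strichartz V}). It is proved by expanding $U_V=U+D_V$, observing that the sum of the correction terms has zero mean, dualising against mean-zero $f$, and combining Lemma \ref{lem:L2 dual estimate} with the Christ--Kiselev-type Schatten bounds of \cite{BHS25}.
\item The Lipschitz estimate of Lemma \ref{lem:difference}, in which the trace parts cancel.
\end{enumerate}
In this formulation, $\gamma\in C([0,T];\mathfrak{S}^\alpha)$ and conservation of $\|\gamma(t)\|_{\mathfrak{S}^\alpha}$ come for free from unitarity. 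That is what makes your globality step legitimate. With a Duhamel definition you would also have to show that solutions have the conjugated form, a converse the paper explicitly leaves open.

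Your ill-posedness plan is also off target. Translation-invariant data have $\overline{\rho}=0$, so the sharpness example behind Theorem \ref{thm:L2 bound renormalised}, which uses $\phi_n=2^{-1/2}(e_n+e_{-n})$, says nothing about them. Upgrading growth of a single Picard iterate to discontinuity would also require control of all higher iterates, which you do not have.

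No second-order analysis is needed, because the obstruction is already linear. Take $\gamma_{0,N}=N^{-1/2}\sum_{n=1}^N|\phi_n\rangle\langle\phi_n|$. Then $\|\gamma_{0,N}\|_{\mathfrak{S}^\alpha}=N^{1/\alpha-1/2}\to0$ for $\alpha>2$, $\|\gamma_{0,N}\|_{\mathfrak{S}^2}=1$, and $\|\overline{\rho}(U(t)\gamma_{0,N}U^*(t))\|_{L^2_{t,x}}\sim1$. A continuous extension of the data-to-density map would force $\varrho_N\to0$ in $L^2_{t,x}$. Meanwhile Lemma \ref{lem:difference} bounds the distance between $\varrho_N=\overline{\rho}(U_{\pm\varrho_N}(t)\gamma_{0,N}U_{\pm\varrho_N}^*(t))$ and the free renormalised density by $\Phi(T^\theta\|\varrho_N\|_{L^2_{t,x}})\,T^{2\theta}\|\varrho_N\|_{L^2_{t,x}}\|\gamma_{0,N}\|_{\mathfrak{S}^2}\to0$. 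Together these give $1\lesssim0$. This argument also rests on the $U_V$ estimates missing from your well-posedness part.
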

\begin{remark}
For the precise statements of Theorems \ref{thm:cubic optimal well posedness} and \ref{thm:cubic optimal well posedness renormalised}, we direct the reader to Propositions \ref{prop:cubic NLSS wellposed} and \ref{prop:cubic NLSS renormalised wellposed}.  We also direct the reader to Definitions \ref{def:solution} and \ref{def:solution renormalised} for precise definitions of solutions to the NLS system.
\end{remark}

\begin{remark}
A related question is the problem of the well-posedness of \eqref{eq:NLSS} and \eqref{eq:NLSS'} when the nonlinearity is nonlocal. In other words, when the right hand side of the PDE is of the form $[-\De + w*\rho_\gamma,\gamma]$, where $w$ is a real-valued even interaction potential. We note that formally, \eqref{eq:NLSS} corresponds to taking $w =  \pm \delta$.
We expect that in this setting, one can obtain the well-posedness portion of Theorems \ref{thm:cubic optimal well posedness} and \ref{thm:cubic optimal well posedness renormalised} whenever $\widehat{w} \in \ell^\infty$. The ill-posedness is less clear, and it would be interesting to see if the nonlocal interaction can improve the well-posedness theory of \eqref{eq:NLSS} and \eqref{eq:NLSS'}. This is beyond the scope of this paper, and we leave it to future research.
\end{remark}
\begin{remark}
Another possible application for Theorem \ref{thm:L3 bound renormalised} is to study the quintic NLS system, given by
\begin{equation}
	\begin{dcases}
		i\pl_t u_n + \De u_n = \pm \rho^2 u_n, \quad u_n :[0,T]\times\T \to \C & \\
        \rh = \sum_{m\in\BN} \lm_m |u_m|^2, \\
		u_n(0)=\phi_n \in H^s(\T)&.
	\end{dcases}
\end{equation}
Note that in the quintic case, we should take our initial data to be in $H^s(\T)$ for $s >0$. This is in light of the work of Kishimoto \cite{Kis14}, which shows that the solution map of the (one-body) quintic NLS on $\T$ cannot be smooth when acting on $L^2(\T)$. The (local) solution map is smooth on $H^s(\T)$ for $s > 0$, see \cite{Bou93}. As in the cubic case, we can restate the problem in terms of density operators by writing
\begin{equation}\label{eq:qNLSS} \tag{qNLSS}
	\begin{dcases}
		i\pl_t \ga = [-\De_x \pm (\rh_\ga)^2, \ga], \quad \ga:[0,T] \to \CB(H^s(\T)),&\\
		\ga(0)=\ga_0 \in \CB(H^s(\T)).
	\end{dcases}
\end{equation}
The difference in this setting compared to the cubic system is that instead of the equation with the renormalised density giving the same NLS system for trace-class operators, one instead obtains a cubic-quintic NLS system. Following the principles of local well-posedness for the one-body equation, as for example in \cite{Zha06,LYYZ25,ABR26}, one expects the system's local well-posedness theory to be determined by the higher-order nonlinearity. Thus, a possible approach to this problem is to subtract the extra cubic term which arises from the commutator, and to apply a perturbation-like argument. We do not comment further on the quintic problem in this paper.
\end{remark}
\subsection{Method of proof}
Let us briefly comment on the method of proof. The main advantage of considering the renormalised density is that it allows us to reduce the set of functions that we test against when using duality arguments. Indeed, we note that for a function $f \in L^2(\T)$ satisfying
\begin{equation*}
\wh{f}(0) =  \frac{1}{\sqrt{2\pi}} \int dx \, f(x) = 0,
\end{equation*}
one has by Parseval's theorem that
\begin{equation*}
\|f\|_{L^2} = \sup_{\substack{g \in L^2 \\ \|g\|_{L^2} = 1 \\\langle g \rangle_x = 0}} \int_{\T} dx \, \overline{g(x)}f(x).
\end{equation*}
This rather simple observation is crucial to a number of the arguments when proving orthonormal Strichartz estimates, and is the primary mechanism which allows us to improve the estimates compared to when one does not implement renormalisation. {It is central to the proof of Lemma \ref{lem:L2 dual estimate}, a key step in the proof of Theorem \ref{thm:L2 bound renormalised}. The proof of the $L^3$ orthonormal Strichartz estimate in Theorem \ref{thm:L3 bound renormalised} is based on a counting estimate and interpolation. }
 
For the proof of the well-posedness of cubic NLS systems, we reduce the argument to estimates of the (renormalised) density. The original density operator $\ga(t)$ can be recovered from the (renormalised) density.  This idea has been used in the context of the Hartree equation for infinitely many particles. See, for example, \cite{LS14,CHP18,Had25,HH25,BHS25}.
This requires us to prove Strichartz estimates for a linear Schr\"odinger equation with potential. Ill-posedness is shown by finding a sequence of initial conditions which contradict the existence of a continuous flow map for a positive time $T > 0$.
\subsection{Outline of the paper}
In Section \ref{SEC:Prelims}, we recall some useful results from operator theory that will be used throughout the paper. Section \ref{sec:OStri} is dedicated to the proof of the orthonormal Strichartz estimates. Section \ref{SEC:UV:STRICHARTZ} develops Strichartz estimates for the propagator $U_V$ and in Section \ref{SEC:WP:proof} we address the problem of the solution theory for the two different NLS systems. We briefly comment on the case of the renormalised density on higher dimensional toruses in Section \ref{sec:higher dimensional case}. Finally, in Appendix \ref{SEC:uv:construction}, we give a construction of the propagator $U_V$ for $V \in L^2_{t,x}$. 
\section{Preliminary results}
\label{SEC:Prelims}
In this section we recall some results from operator theory that we will use throughout the paper. Firstly, we have the following version of H\"older's inequality for Schatten spaces, see \cite{Sim05}.
\begin{lemma}
\label{Holder_inequality}
Let $p,p_1,p_2 \in [1,\infty]$ be such that $\frac{1}{p} = \frac{1}{p_1} + \frac{1}{p_2}$. Suppose that $A_1 \in \FS^{p_1}$ and $A_2 \in \FS^{p_2}$. Then
\begin{equation*}
\|A_1A_2\|_{\FS^p} \leq \|A_1\|_{\FS^{p_1}}\|A_2\|_{\FS^{p_2}}.
\end{equation*}
\end{lemma}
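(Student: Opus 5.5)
This is Hölder's inequality for Schatten classes (Lemma \ref{Holder_inequality}). The plan is the standard one via singular values and majorisation. The case $p=\infty$ forces $p_1=p_2=\infty$, so it is just $\|A_1A_2\|_{\CB}\le\|A_1\|_{\CB}\|A_2\|_{\CB}$. If one of $p_1,p_2$ is $\infty$, say $p_2=\infty$, then $p=p_1$, and the claim follows from $s_k(A_1A_2)\le s_k(A_1)\|A_2\|$. That bound comes from the min--max characterisation $s_k(A)=\min_{\mathrm{rank}\,F<k}\|A-F\|$ applied to $A_1A_2 - FA_2$. So the main case is $1\le p<\infty$ with $p_1,p_2<\infty$, where all operators are compact.

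\emph{Step 1.} Prove Horn's inequality for singular values,
\[
\prod_{k=1}^n s_k(A_1A_2)\le \prod_{k=1}^n s_k(A_1)s_k(A_2)\qquad\text{for all } n.
\]
I would use antisymmetric tensor powers. The norm $\|\Lambda^n A\|$ equals $\prod_{k\le n}s_k(A)$, which one checks from the singular value decomposition \eqref{eq:spectral decomposition}, since $\Lambda^n A$ is diagonalised by wedge products of the singular vectors. Functoriality gives $\Lambda^n(A_1A_2)=\Lambda^nA_1\,\Lambda^nA_2$, and submultiplicativity of the operator norm then yields the inequality. Alternatively one can reduce to finite-rank truncations and use $|\det|$ of compressions.

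\emph{Step 2.} Pass from multiplicative to additive majorisation. Weyl's lemma says that if $x_k,y_k$ are nonincreasing, positive, and $\prod_{k\le n}x_k\le\prod_{k\le n}y_k$ for all $n$, then $\sum_{k\le n}\phi(\log x_k)\le \sum_{k\le n}\phi(\log y_k)$ for every convex increasing $\phi$. Taking $\phi(t)=e^{pt}$ gives
\[
\sum_k s_k(A_1A_2)^p\le \sum_k s_k(A_1)^p s_k(A_2)^p .
\]
The proof of Weyl's lemma is the usual Karamata/Abel-summation argument on partial sums; zero singular values are handled by a limiting argument.

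\emph{Step 3.} Apply the scalar Hölder inequality with exponents $p_1/p$ and $p_2/p$, which are conjugate since $p/p_1+p/p_2=1$. This gives
\[
\sum_k s_k(A_1)^p s_k(A_2)^p\le \Big(\sum_k s_k(A_1)^{p_1}\Big)^{p/p_1}\Big(\sum_k s_k(A_2)^{p_2}\Big)^{p/p_2}.
\]
Taking $p$-th roots gives the claim.

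The main obstacle is Step 1, specifically the identification $\|\Lambda^nA\|=\prod_{k\le n}s_k(A)$ in infinite dimensions. I would sidestep this by approximating $A_1,A_2$ by finite-rank operators in the relevant Schatten norms and proving the finite-dimensional statement, where $\Lambda^n$ is elementary. The continuity of $s_k$ under norm perturbation, $|s_k(A)-s_k(B)|\le\|A-B\|$, then lets one pass to the limit. Since this lemma is classical, in the paper itself one may simply cite \cite{Sim05}.
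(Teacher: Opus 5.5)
Your proof is correct. The paper does not prove this lemma itself but cites \cite{Sim05}, and your route is the standard argument given there: Horn's inequality via antisymmetric tensor powers, Weyl's majorisation lemma with $\phi(t)=e^{pt}$, and scalar H\"older with the conjugate exponents $p_1/p$ and $p_2/p$, together with the approximation-number bound $s_k(A_1A_2)\le s_k(A_1)\|A_2\|$ for the endpoint cases.
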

\begin{remark}
Where the context is clear, we will slightly abuse convention and describe both the standard version of H\"older's inequality and Lemma \ref{Holder_inequality} by the name H\"older's inequality.
\end{remark}
We recall the following property of operators that are Hilbert-Schmidt.
\begin{lemma}\label{lem:HS}
Let $\Omega$ be a $\sigma$-finite measure space.  
    For any $A \in \FS^2(L^2(\Om))$, we have
    \begin{equation}
        \|A\|_{\FS^2(L^2(\Om))} = \|A(x,x')\|_{L^2(\Om\times\Om)},
    \end{equation}
    where $A(x,x')$ is the integral kernel of $A$.
\end{lemma}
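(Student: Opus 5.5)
The plan is to compute both sides from the singular value decomposition \eqref{eq:spectral decomposition} and to identify the integral kernel explicitly. Since $A\in\FS^2$ is compact, we write $A=\sum_{n} a_n|\phi_n\rg\lg\ps_n|$, where $(\phi_n)$ and $(\ps_n)$ are orthonormal systems in $L^2(\Om)$ and $\|A\|_{\FS^2}^2=\Tr(|A|^2)=\sum_n a_n^2<\I$. The candidate kernel is
\begin{equation*}
K(x,x') := \sum_{n} a_n \phi_n(x)\ov{\ps_n(x')}.
\end{equation*}

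The first step is to show that the functions $\Phi_n(x,x'):=\phi_n(x)\ov{\ps_n(x')}$ form an orthonormal system in $L^2(\Om\times\Om)$. The product measure is well defined because $\Om$ is $\si$-finite, so Fubini--Tonelli applies and gives
\begin{equation*}
\lg \Phi_n,\Phi_m\rg_{L^2(\Om\times\Om)}=\lg\phi_n,\phi_m\rg\,\ov{\lg\ps_n,\ps_m\rg}=\de_{nm}.
\end{equation*}
Since $(a_n)\in\ell^2$, the series defining $K$ then converges in $L^2(\Om\times\Om)$. Pythagoras gives $\|K\|_{L^2}^2=\sum_n a_n^2=\|A\|_{\FS^2}^2$.

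The second step, which is the only real point to check, is that $K$ is indeed the integral kernel of $A$. Any $k\in L^2(\Om\times\Om)$ defines an operator $T_k f(x)=\int k(x,x')f(x')\,dx'$, and Cauchy--Schwarz with Fubini gives $\|T_k\|_{\CB}\le\|k\|_{L^2}$. For the truncations $A_N=\sum_{n\le N}a_n|\phi_n\rg\lg\ps_n|$, the kernel is exactly the partial sum $K_N$. We then have $A_N\to A$ in operator norm, and also $T_{K_N}\to T_K$ in operator norm because $\|K_N-K\|_{L^2}\to0$. Hence $A=T_K$.

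Finally, the kernel is unique almost everywhere: if $T_k=0$, then testing against products $\ov{g(x)}f(x')$, whose span is dense in $L^2(\Om\times\Om)$, gives $k=0$ a.e. Therefore $A(x,x')=K(x,x')$, which proves $\|A\|_{\FS^2}=\|A(x,x')\|_{L^2(\Om\times\Om)}$.
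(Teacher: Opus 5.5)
The paper does not prove Lemma \ref{lem:HS}. It recalls it as a standard fact of operator theory, so there is no argument of the paper's to compare yours against. Your proof is correct and complete for the statement as given.

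The steps hold up as follows:
\begin{itemize}
\item The singular value decomposition gives $\|A\|_{\FS^2}^2=\sum_n a_n^2$.
\item The family $\Phi_n(x,x')=\phi_n(x)\ov{\ps_n(x')}$ is orthonormal in $L^2(\Om\times\Om)$ by Fubini, which is where $\si$-finiteness is used. The norm identity for $K=\sum_n a_n\Phi_n$ is then just Pythagoras.
\item The bound $\|T_k\|_{\CB(L^2)}\le\|k\|_{L^2(\Om\times\Om)}$ lets you pass from the finite-rank truncations to $A=T_K$.
\item Finite sums of products $g(x)f(x')$ are dense in $L^2(\Om\times\Om)$. This is standard for $\si$-finite spaces, e.g.\ by a $\pi$--$\lambda$ argument on rectangles of finite measure. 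It shows that any $L^2$ kernel of $A$ agrees with $K$ almost everywhere.
\end{itemize}
The other common textbook route expands $\sum_{i,j}|\lg e_i,Ae_j\rg|^2$ in an orthonormal basis and invokes Parseval for the product basis $e_i(x)\ov{e_j(x')}$. By contrast, your route uses density of products only to identify the kernel, not to obtain the norm identity itself.

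The same ingredients also give the converse. This is the form one needs when starting from an explicit kernel without knowing a priori that the operator is Hilbert--Schmidt:
\begin{itemize}
\item For $k\in L^2(\Om\times\Om)$, $T_k$ is the operator-norm limit of the finite-rank operators $T_{k_j}$, where $k_j\to k$ in $L^2$ are finite sums of products. Hence $T_k$ is compact.
\item Writing $T_k=\sum_n a_n|\phi_n\rg\lg\ps_n|$, one has $a_n=\lg\phi_n,T_k\ps_n\rg=\lg\Phi_n,k\rg_{L^2(\Om\times\Om)}$.
\item Bessel's inequality then gives $\sum_n a_n^2\le\|k\|_{L^2}^2$, i.e.\ $T_k\in\FS^2$, and your argument upgrades this to equality.
\end{itemize}
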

We have the following duality result. For a proof, see \cite[Section 2.3]{FLLS24}.
\begin{lemma}
\label{duality_lemma}
Suppose $A \in \FS^1$ and $V \in L^\I_x(\T)$. Then
\begin{equation*}
\label{eq:dual}
	\int_\T dx \, \rh_A(x)V(x) = \Tr(A V) \equiv \Tr_{L^2} (A M_V),
\end{equation*}
where $M_V$ denotes the operator which acts as multiplication by $V$.
\end{lemma}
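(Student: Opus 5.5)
The plan is to expand both sides through the singular value decomposition \eqref{eq:spectral decomposition} of $A$ and compare them term by term. The only analytic input needed is absolute summability of the singular values, $(a_n)\in\ell^1$, which lets us interchange sums with integrals and traces.

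\emph{Step 1: the density is a genuine $L^1$ function.} Write $A=\sum_{n} a_n|\phi_n\rangle\langle\psi_n|$ with $a_n\ge 0$ and $\sum_n a_n=\|A\|_{\FS^1}<\infty$. By Cauchy--Schwarz, $\|\phi_n\overline{\psi_n}\|_{L^1(\T)}\le 1$ for every $n$. Hence the series \eqref{eq:definition of density} converges absolutely in $L^1(\T)$, with $\|\rho_A\|_{L^1}\le \|A\|_{\FS^1}$. The same bound gives
\begin{equation*}
\sum_n a_n\int_\T dx\,|V||\phi_n||\psi_n|\le \|V\|_{L^\infty}\|A\|_{\FS^1}<\infty .
\end{equation*}
Fubini/dominated convergence then lets us exchange sum and integral:
\begin{equation*}
\int_\T dx\,\rho_A(x)V(x)=\sum_n a_n\int_\T dx\,\overline{\psi_n(x)}\,V(x)\phi_n(x).
\end{equation*}

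\emph{Step 2: the operator side.} Since $\|M_V\|_{\CB(L^2)}=\|V\|_{L^\infty}$, the product $AM_V$ is trace class by Lemma \ref{Holder_inequality} with $p_2=\infty$. Let $A_K$ denote the rank-$K$ truncation of the decomposition. For each rank-one piece, computing the trace in any orthonormal basis gives
\begin{equation*}
\Tr\big(|\phi\rangle\langle\psi| M_V\big)=\langle \psi, M_V\phi\rangle=\int_\T \overline{\psi}\,V\phi .
\end{equation*}
Summing over $n\le K$ yields $\Tr(A_KM_V)$. Since $\|A-A_K\|_{\FS^1}=\sum_{n>K}a_n\to 0$, and $|\Tr(BM_V)|\le\|B\|_{\FS^1}\|V\|_{L^\infty}$, we may pass to the limit $K\to\infty$. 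Comparing with Step 1 gives the identity.

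\emph{Step 3: consequences recorded in the Remark.} The right-hand side $\Tr(AM_V)$ does not depend on the chosen decomposition and is linear in $A$. Since $L^\infty(\T)$ separates points of $L^1(\T)$, the density $\rho_A$ is therefore independent of the singular value decomposition, and $A\mapsto\rho_A$ is linear.

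There is no real obstacle in this argument. The only point requiring care is justifying the two limit interchanges (sum–integral and sum–trace), and both follow directly from $(a_n)\in\ell^1$ together with the normalisation $\|\phi_n\|_{L^2}=\|\psi_n\|_{L^2}=1$.
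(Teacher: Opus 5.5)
Your proof is correct: the $\ell^1$ summability of the singular values, together with $\|\phi_n\overline{\psi_n}\|_{L^1}\le 1$ and $|\Tr(BM_V)|\le \|B\|_{\FS^1}\|V\|_{L^\infty}$, justifies both the sum--integral and the sum--trace interchanges. The rank-one computation $\Tr(|\phi\rangle\langle\psi|M_V)=\int_\T \overline{\psi}\,V\phi$ is also right. The paper gives no argument of its own and simply cites \cite[Section 2.3]{FLLS24}; your singular-value-decomposition argument is the standard self-contained proof of this fact, and your Step 3 also proves the decomposition-independence and linearity of $\rho_A$ that the paper's next remark attributes to this lemma.
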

Finally, we have the following key duality result, which is central to our analysis of the renormalised density.
\begin{lemma}
\label{duality_zero_mean_lemma}
Let $g \in L^2_{t,x}([0,T] \times\T)$ be such that $\langle g \rangle = 0$. Then
\begin{equation*}
\|g\|_{L^2_\tx([0,T] \times \T)}
		 =  \sup_{\substack{\|V\|_{L^2_\tx([0,T] \times \T)} \le 1 \\ \lg V \rg = 0}} \left|{\int_{[0,T]}\int_{\T} dx \, dt \, g(t,x)V(t,x)}\right|.
\end{equation*}
\end{lemma}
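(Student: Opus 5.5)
The plan is to prove the two inequalities separately: a Cauchy--Schwarz bound for ``$\le$'' and an explicit extremiser for ``$\ge$''. The space $[0,T]\times\T$ has finite measure $2\pi T$, so $L^2_{t,x}([0,T]\times\T)\subset L^1$. In particular, averages $\lg V\rg$ make sense for every $V$ in the unit ball, and the constraint $\lg V\rg=0$ is a closed linear condition.

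For the upper bound, take any admissible $V$ with $\|V\|_{L^2_\tx}\le 1$ and $\lg V\rg=0$. By Cauchy--Schwarz,
\begin{equation*}
\left|\int_0^T\int_\T dx\,dt\, g V\right| \le \|g\|_{L^2_\tx}\|V\|_{L^2_\tx}\le \|g\|_{L^2_\tx}.
\end{equation*}
Taking the supremum over admissible $V$ gives ``$\ge$'' in the stated identity, i.e.\ the supremum is at most $\|g\|_{L^2_\tx}$. This direction does not use $\lg g\rg=0$.

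For the lower bound, first dispose of the case $g=0$, where both sides vanish. Otherwise set
\begin{equation*}
V:=\frac{\ov{g}}{\|g\|_{L^2_\tx}}.
\end{equation*}
Then $\|V\|_{L^2_\tx}=1$. Its average is $\lg V\rg=\ov{\lg g\rg}/\|g\|_{L^2_\tx}=0$, because conjugation commutes with integration and $\lg g\rg=0$ by hypothesis. So $V$ is admissible, and
\begin{equation*}
\int_0^T\int_\T dx\,dt\, gV=\|g\|_{L^2_\tx}.
\end{equation*}
Hence the supremum is attained and equals $\|g\|_{L^2_\tx}$.

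There is no genuine obstacle. The one point to check is that the natural extremiser $\ov g$ satisfies the mean-zero constraint, and this is exactly where the hypothesis $\lg g\rg=0$ enters. Without it, the supremum would instead equal $\|g-\lg g\rg\|_{L^2_\tx}$, the norm of the orthogonal projection of $g$ onto mean-zero functions. One could equally phrase the whole argument as this projection statement: mean-zero functions form a closed subspace of the Hilbert space $L^2_{t,x}$, and $g$ already lies in it.
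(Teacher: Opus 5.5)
Your proof is correct and is essentially the paper's argument. The paper simply invokes the orthogonal decomposition $L^2([0,T]\times\T)=L^2_0([0,T]\times\T)\oplus^\perp\C$. Your Cauchy--Schwarz bound, together with the admissible extremiser $V=\overline{g}/\|g\|_{L^2_{t,x}}$, is just the explicit form of that observation. You were right to conjugate $g$, since the pairing in the lemma is bilinear.
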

\begin{proof}
The result is a consequence of the fact that $L^2([0,T] \times \T) = L^2_0([0,T] \times \T) \oplus^\perp \C$. Here $L^2_0([0,T] \times \T)$ is the space of mean-zero functions in $L^2([0,T] \times \T)$.
\end{proof}
\section{Orthonormal Strichartz estimates}\label{sec:OStri}
In this section, we give proofs of the orthonormal Strichartz estimates stated in Propositions \ref{prop:L2 OStri 1D torus paraphrase} and \ref{thm:Nakamura paraphrase} and Theorems \ref{thm:L2 bound renormalised} and \ref{thm:L3 bound renormalised}.
\subsection{Proofs of Propositions \ref{prop:L2 OStri 1D torus paraphrase} and \ref{thm:Nakamura paraphrase}}
We first prove Proposition \ref{prop:L2 OStri 1D torus paraphrase}.
\begin{proof}[Proof of Proposition \ref{prop:L2 OStri 1D torus paraphrase}]
The case for $\alpha = 1$ follows by applying the triangle inequality and applying \eqref{eq:L4}. To show that the estimate fails for $\alpha > 1$, we recall the basis functions $e_n$ defined in \eqref{en_defintion}. It follows that $U(t) e_n = e^{-in^2 t} e_n$. So if $a_n > 0$, one has
\begin{equation*}
    		\bigg\|\sum_{n=1}^\I a_n |U(t) e_n|^2\bigg\|_{L^2_\tx(\T^2)} = \|a_n\|_{\ell_n^1}.
\end{equation*}
It follows that we cannot improve \eqref{eq:L2 OStri 1D torus paraphrase} to any $\alpha > 1$.
\end{proof}
We now give a proof of the sufficiency portion of Proposition \ref{thm:Nakamura paraphrase}. As mentioned previously, this is a special case of a result previously proved by Nakamura in \cite{Nak20}. The necessity portion of the result can be proved as in the original argument in \cite{Nak20}. First we prove the following lemma.
\begin{lemma}[Pointwise estimate]\label{lem:pointwise}
	We have
	\begin{equation}\label{eq:pointwise}
		\|\rh(P_{\le N}AP_{\le N})\|_{L^\I_x(\T)} \ls N\|A\|_{\CB(L^2(\T))}.
	\end{equation}
\end{lemma}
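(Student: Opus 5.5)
We will prove \eqref{eq:pointwise} by writing the density of $P_{\le N}AP_{\le N}$ explicitly in Fourier coefficients and then bounding it pointwise by duality with the operator norm. A priori $A$ is only bounded, not trace class. However, $P_{\le N}AP_{\le N}$ has finite rank at most $2N+1$, so it is trace class and Definition~\ref{density_function_definition} applies.

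\textbf{Step 1: the kernel.} The integral kernel of $P_{\le N}AP_{\le N}$ is the smooth trigonometric polynomial
\begin{equation*}
K(x,x') = \sum_{|n|,|m|\le N} \lg e_n, A e_m\rg \, e_n(x)\ov{e_m(x')}.
\end{equation*}
We check that $\rh(P_{\le N}AP_{\le N})(x) = K(x,x)$. To do this, write the singular value decomposition of the finite rank operator as $\sum_k a_k|\phi_k\rg\lg\psi_k|$ with $\phi_k,\psi_k \in \mathrm{Ran}\,P_{\le N}$. Each $\phi_k,\psi_k$ is then a trigonometric polynomial. Expanding them in the basis $(e_n)$ shows that $\sum_k a_k\phi_k(x)\ov{\psi_k(x)}$ coincides pointwise with $K(x,x)$. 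Alternatively, one can argue through Lemma~\ref{duality_lemma}: for every $V\in L^\I$,
\begin{equation*}
\int \rh V = \Tr(P_{\le N}AP_{\le N}V) = \int K(x,x)V(x)\,dx .
\end{equation*}

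\textbf{Step 2: the pointwise bound.} Fix $x$ and set $f_x := \sum_{|n|\le N}\ov{e_n(x)}\,e_n = P_{\le N}\delta_x$, the Dirichlet kernel centred at $x$. Then
\begin{equation*}
K(x,x) = \lg f_x, A f_x\rg_{L^2},
\end{equation*}
with the inner product taken antilinear in the first slot; this follows by expanding both sides. Cauchy--Schwarz then gives
\begin{equation*}
|K(x,x)| \le \|A\|_{\CB(L^2)}\,\|f_x\|_{L^2}^2 .
\end{equation*}
By Parseval,
\begin{equation*}
\|f_x\|_{L^2}^2 = \sum_{|n|\le N}|e_n(x)|^2 = \frac{2N+1}{2\pi} \ls N
\end{equation*}
for $N\ge1$. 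Taking the supremum over $x$ yields \eqref{eq:pointwise}.

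The only point needing any care is the identification in Step 1, namely that the abstract density coincides with the diagonal of the smooth kernel. Since all objects are finite trigonometric sums, this is a direct expansion, and there is no genuine obstacle.
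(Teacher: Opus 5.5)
Your proof is correct, and it takes a more direct route than the paper.

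\textbf{Your route.} You evaluate the density pointwise. You identify $\rh(\PN A\PN)(x)$ with the diagonal $K(x,x)$ of the smooth kernel. You then write it as the quadratic form $\lg f_x, Af_x\rg$ with $f_x=\PN\delta_x$. Cauchy--Schwarz and Parseval give the bound $\frac{2N+1}{2\pi}\|A\|_{\CB(L^2)}$ at every $x$.

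\textbf{The paper's route.} The paper argues by duality. It writes $\int V\rh(\PN A\PN)\,dx=\Tr(\PN V\PN A)$ via Lemma~\ref{duality_lemma} and bounds this by $\|\PN V\PN\|_{\FS^1}\|A\|_{\CB(L^2)}$. For $V\ge 0$ it computes $\|\PN V\PN\|_{\FS^1}=\Tr(\PN V)=\frac{2N+1}{2\pi}\|V\|_{L^1}$. It then recovers the $L^\infty$ bound from $L^1$--$L^\infty$ duality.

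\textbf{Comparison.} The two arguments are the same computation seen from opposite sides. Formally putting $V=\delta_x$ in the paper's argument gives $\PN V\PN=|f_x\rg\lg f_x|$, whose trace norm is exactly your $\|f_x\|_{L^2}^2$. Your version is more elementary: there is no duality or approximation step and no reduction to nonnegative $V$. However, it relies on having an explicit kernel. The paper's version fits the Schatten-duality framework used elsewhere in the paper, where $\rh$ is tested against potentials and operators such as $\PN V\PN$ or $\int U^*(t)V(t)U(t)\,dt$ are estimated in Schatten norms. That framework does not need a kernel formula.

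\textbf{Minor point.} In Step 1, the claim $\psi_k\in\mathrm{Ran}\,\PN$ deserves a word of justification. For $a_k>0$, these are eigenvectors of $\PN A^*\PN A\PN$ with nonzero eigenvalue. Your alternative argument via Lemma~\ref{duality_lemma} already suffices, though, since equality almost everywhere is all the $L^\infty$ norm sees.
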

\begin{proof}
	Suppose without loss of generality that $V \in C^\I(\T)$ is such that $V(x)\ge 0$ for all $x \in \T$. Using Lemma \ref{duality_lemma}, cyclicity of the trace, and H\"older's inequality, we have
	\begin{align}
		\abs{\int_{\T} dx \, V(x) \rh(P_{\le N} A P_{\le N})(x)}
		= \abs{\Tr (P_{\le N} V P_{\le N} A)} \le \|P_{\le N} V P_{\le N}\|_{\FS^1} \|A\|_{\CB(L^2(\T))}.
	\end{align}
	The integral kernel of $P_{\le N}$ is given by $P_{\le N}(x,x')=(2\pi)^{-1} \sum_{n=-N}^N e^{in(x-x')}$.
	Therefore
	\begin{align}
		\|P_{\le N} V(x) P_{\le N}\|_{\FS^1}
		= \Tr(P_{\le N} V P_{\le N}) = \Tr(P_{\le N} |V|) 
		&= \frac{1}{2\pi}\sum_{n=-N}^N \int_{\T} dx \, |V(x)| e^{in(x-x)} \\&
        \sim N \|V\|_{L^1_x(\T)},
	\end{align}
    where the second equality is a consequence of the positivity of $V$. It follows that
	\begin{equation}
		\left|\int_{\T} dx \, V(x) \rho(P_{\le N} A P_{\le N})(x) \right|\ls N \|V\|_{L^1_x(\T)} \|A\|_{\CB(L^2(\T))}.
	\end{equation}
	By a duality argument, we obtain \eqref{eq:pointwise}.
\end{proof}
\begin{proof}[Proof of Proposition \ref{thm:Nakamura paraphrase}]
Recall we only prove the sufficiency portion of Proposition \ref{thm:Nakamura paraphrase} here. Applying Proposition \ref{prop:L2 OStri 1D torus paraphrase}, we have
\begin{equation}\label{eq:triangle}
	\|\rh(U(t)P_{\le N}\ga_0 P_{\le N}U^*(t))\|_{L^2_\tx(\T^2)}\ls \|P_{\le N}\ga_0P_{\le N}\|_{\FS^1} \le \|\ga_0\|_{\FS^1}
\end{equation}
Moreover, it follows from Lemma \ref{lem:pointwise} that
\begin{equation}\label{eq:pointwise 2}
	\|\rh(U(t)P_{\le N}\ga_0 P_{\le N}U^*(t))\|_{L^\I_\tx(\T^2)} \ls N\|\ga_0\|_{\CB(L^2(\T))}.
\end{equation}
Interpolating between \eqref{eq:triangle} and \eqref{eq:pointwise 2}, we obtain
\begin{equation}\label{eq:L3 1/3 3/2}
	\|\rh(U(t)P_{\le N}\ga_0 P_{\le N}U^*(t))\|_{L^3_\tx(\T^2)} \ls N^{1/3}\|\ga_0\|_{\FS^{3/2}}.
\end{equation}
Applying the triangle inequality and \eqref{eq:L6}, we have
\begin{equation}\label{eq:L3 epsilon 1}
	\|\rh(U(t)P_{\le N}\ga_0 P_{\le N}U^*(t))\|_{L^3_\tx(\T^2)} \ls_\ep N^\ep\|\ga_0\|_{\FS^1}.
\end{equation}
Interpolating between \eqref{eq:L3 1/3 3/2} and \eqref{eq:L3 epsilon 1} completes the proof of Proposition \ref{thm:Nakamura paraphrase}.
\end{proof}

\subsection{Proof of Theorem \ref{thm:L2 bound renormalised}}
In this section, we prove Theorem \ref{thm:L2 bound renormalised}.
We need the following lemma.
\begin{lemma}\label{lem:L2 dual estimate}
	For any $V\in L^2_\tx(\T^2)$ such that $\lg V \rg = 0$, we have
	\begin{equation}\label{eq:L2 dual estimate}
		\No{\int_\T dt \, U^*(t)V(t)U(t)}_{\FS^2} \ls \|V\|_{L^2_\tx(\T^2)}.
	\end{equation}
\end{lemma}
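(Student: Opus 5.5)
We compute the Hilbert--Schmidt norm of the operator $K := \int_\T dt\, U^*(t)V(t)U(t)$ directly in the Fourier basis $(e_n)_{n\in\Z}$. By Lemma \ref{lem:HS}, or simply by Parseval for the matrix entries, we have
\begin{equation*}
\|K\|_{\FS^2}^2 = \sum_{n,m\in\Z} |\lg e_n, K e_m\rg|^2 .
\end{equation*}
Expanding $V(t,x) = \sum_{k\in\Z}\sum_{\tau\in\Z} c(\tau,k)\, e^{-i\tau t} e^{ikx}$ with $c(\tau,k)$ proportional to the space-time Fourier coefficients, and using $U(t)e_m = e^{-itm^2}e_m$, we get
\begin{equation*}
\lg e_n, U^*(t)V(t)U(t) e_m\rg = e^{it(n^2-m^2)} \lg e_n, V(t) e_m\rg \sim e^{it(n^2-m^2)}\sum_\tau c(\tau,n-m)e^{-i\tau t}.
\end{equation*}
Integrating over $t\in\T$ then kills every temporal mode except $\tau = n^2-m^2$. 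Hence
\begin{equation*}
\lg e_n, K e_m\rg \sim \wh V(n^2-m^2,\, n-m),
\end{equation*}
up to a harmless normalisation constant, where $\wh V(\tau,k)$ denotes the space-time Fourier coefficient.

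The estimate therefore reduces to the counting bound
\begin{equation*}
\sum_{n,m} |\wh V(n^2-m^2,n-m)|^2 \ls \sum_{(\tau,k)} |\wh V(\tau,k)|^2 = C\|V\|_{L^2_\tx}^2 .
\end{equation*}
This holds as soon as the map $(n,m)\mapsto(\tau,k)=(n^2-m^2,n-m)$ is at most boundedly-many-to-one on the support of $\wh V$. If $k=n-m\neq 0$, then $\tau = k(n+m)$ determines $n+m=\tau/k$, and together with $n-m=k$ this fixes $(n,m)$ uniquely. So the map is injective off the set $\{k=0\}$.

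The main obstacle, and the only place the hypothesis enters, is the diagonal $n=m$. There $(\tau,k)=(0,0)$ for every $n$, so the single coefficient $\wh V(0,0)$ would be counted infinitely often. This is exactly the mechanism behind the failure of Proposition \ref{prop:L2 OStri 1D torus paraphrase} for $\al>1$. The assumption $\lg V\rg=0$ says precisely that $\wh V(0,0)=0$, so all diagonal entries of $K$ vanish. The remaining off-diagonal terms are controlled by injectivity and Parseval on $\T^2$, which gives \eqref{eq:L2 dual estimate}.

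Since $K$ is then given by a square-summable matrix, it is indeed Hilbert--Schmidt. For a general $V\in L^2_\tx$ one first argues for trigonometric polynomials and then passes to the limit by density. This lemma is the dual form of Theorem \ref{thm:L2 bound renormalised}. Combined with Lemma \ref{duality_zero_mean_lemma}, Lemma \ref{duality_lemma} and Hölder in $\FS^2$, it will yield the $\al=2$ estimate, noting that $\vrh$ has zero mean.
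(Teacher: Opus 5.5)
Your proposal is correct and is essentially the paper's proof. Both arguments compute $\|K\|_{\FS^2}$ as the $\ell^2_{n,m}$ norm of the Fourier matrix entries and identify these entries with the space-time coefficients $\wt V(\pm(n^2-m^2),n-m)$. Both then use that $(n,m)\mapsto(n^2-m^2,n-m)$ is injective off the diagonal, while the diagonal only sees $\wt V(0,0)=0$. The paper phrases this counting via the substitution $k=n-m$ and the injectivity of $m\mapsto mk$ for $k\neq 0$, which is the same observation as yours.
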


\begin{proof}[Proof of Lemma \ref{lem:L2 dual estimate}]
	For an operator $A:\ell_m^2 \to \ell_n^2$ with integral kernel $A(n,m)$, one has $\|A\|_{\FS^2(\ell^2_n)} = \|A(n,m)\|_{\ell^2_{n,m}}$. Therefore 
	\begin{align}
		\No{\int_\T dt \, U^*(t) V(t) U(t)}_{\FS^2}
		&=\No{\int_\T dt \, e^{-it(m^2 - n^2)} \wh{V}(t,n-m) }_{\ell_{n,m}^2} \\
		&\sim \|\wt{V}(m^2-n^2, n-m)\|_{\ell^2_{n,m}},
	\end{align}
	where $\wt{V}(n,m):= (2\pi)^{-1}\iint_{\T^2}  dx \, dt \, e^{-int-imx} V(t,x)$ is the space-time Fourier coefficient of $V$. Then
\begin{align}
		&\|\wt{V}(m^2-n^2, n-m)\|_{\ell^2_{n,m}}^2 = \|\wt{V}(-(n+2m)n, n)\|_{\ell^2_m \ell_n^2}^2
		 = \sum_{n,m\in\Z} |\wt{V}(-(n+2m)n, n)|^2 \\
		&\quad \le \sum_{n,m\in\Z}|\wt{V}(mn, n)|^2
		\le  \sum_{n,m\in\Z}|\wt{V}(m, n)|^2 = \|\wt{V}(m, n)\|_{\ell^2_{n,m}}^2 = \|V\|_{L^2_\tx(\T^2)}^2,
	\end{align}
	where we use $\wt{V}(0,0) = \langle V \rangle_{t,x} =0$ to get the second inequality.
	Indeed if $|\wt{V}(0,0)|>0$, then
	$$\sum_{n\in\Z} \sum_{m\in\Z}|\wt{V}(mn, n)|^2 \ge \sum_{m\in\Z}|\wt{V}(0,0)|^2 = \I.$$
\end{proof}
We can now prove Theorem \ref{thm:L2 bound renormalised}.
\begin{proof}[Proof of Theorem \ref{thm:L2 bound renormalised}]
We first prove the sufficiency portion of the result. Fix the notation
\begin{equation*}
\ga_0 := \sum_{n=1}^\I a_n |\phi_n \rg \lg\phi_n|, \quad \varrho := \vrh(U(t)\ga_0 U^*(t)) = \sum_{n=1}^\I a_n |U(t)\phi_n|^2 - \frac{1}{2\pi} \sum_{n=1}^\I a_n.
\end{equation*}
		Since $\lg\varrho\rg = 0$ we have
		\begin{align}
			\|\varrho\|_{L^2_\tx(\T^2)}
			&= \sup \abs{\iint_{\T^2}  dx \, dt \,  V(t,x)\varrho(t,x)}
			= \sup \abs{\iint_{\T^2} dx \, dt \, V(t,x) \rh(U(t)\ga_0 U^*(t))}\\
			&= \sup \abs{\Tr\Dk{\int_{\T} dt \, U^*(t) V(t) U(t) \ga_0}}
			\le \|\ga_0\|_{\FS^2} \sup \No{\int_\T dt \, U^*(t) V(t) U(t)}_{\FS^2},
		\end{align}
		where we have used Lemma \ref{duality_zero_mean_lemma}, Lemma \ref{eq:dual}, and H\"older's inequality. We have also used the cyclicity of the trace. Here the suprema are over the set
        \begin{equation*}
            \{V \in L^{2}_{t,x} :\|V\|_{L^2_\tx(\T^2)} \le 1 \text{ and } \lg V \rg = 0\}.
        \end{equation*}
		Applying Lemma \ref{lem:L2 dual estimate} we obtain
		\begin{equation}
			\|\varrho\|_{L^2_\tx(\T^2)} \ls \|\ga_0\|_{\FS^2}.
		\end{equation}

To prove necessity, we again consider the basis functions $e_n$ defined in \eqref{en_defintion}. Set $\phi_n(x)=2^{-1/2} (e_{n}(x) + e_{-n}(x))$. We note that $\phi_n$ forms an orthonormal system. One can compute that
\begin{equation*}
U(t)\phi_n(x) = \frac{e^{-in^2t}}{\sqrt{2}}(e_{n}(x) + e_{-n}(x)).
\end{equation*}
It follows that
\begin{equation}
	|U(t)\phi_n(x)|^2 = \frac{1}{2\pi}(1+\cos(2nx)).
\end{equation}
Let us fix $N \in \BN$. Then consider the sequence that is $a_n := 1$ if $1\le n \le N$ and is $a_n := 0$ if $n > N$. We compute that $\|a_n\|_{\ell_n^\al}=N^{1/\al}$. We also have that
	\begin{equation}
		\varrho = \sum_{n=1}^N |U(t) \phi_n |^2 - \frac{N}{2\pi} = \frac{1}{2\pi}\sum_{n=1}^N \cos(2nx).
	\end{equation}
Since $(\cos(2nx))_{n=1}^N$ is an orthogonal system in $L^2_\tx(\T^2)$, we have
\begin{equation}
	\|\varrho\|_{L^2_\tx(\T^2)} = \frac{1}{2\pi}\K{\sum_{n=1}^N \|\cos(2nx)\|_{L^2_\tx(\T^2)}^2 }^{1/2}\sim N^{1/2}.
\end{equation} 
So if \eqref{eq:L2 bound renormalised} were to hold for $\al>2$, we would have
\begin{equation}
	N^{1/2}\sim \|\varrho\|_{L^2_\tx(\T^2)} \ls N^{1/\al},
\end{equation}
which cannot hold as $N \to \I$.
\end{proof}

\subsection{Proof of Theorem \ref{thm:L3 bound renormalised}}
In this section we will prove Theorem \ref{thm:L3 bound renormalised}.
We first prove the following key lemma.
\begin{lemma}\label{lem:L2x to L2tx}
	Let $\ep>0$. Then, for any $\ga_0 \in\FS^2$ and $f\in L^2_x(\T)$, we have
	\begin{equation}
		\|\vrh(U(t)\PN \ga_0 \PN U^*(t)) U(t)\PN f\|_{L^2_\tx} \ls_\ep N^\ep \|\ga_0\|_{\FS^2} \|f\|_{L^2_x(\T)}.
	\end{equation}
\end{lemma}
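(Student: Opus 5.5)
The plan is to expand everything in Fourier series and reduce the estimate to a lattice-point count. The exceptional resonant set left over by the count is handled by operator boundedness of $\ga_0$; the renormalisation is exactly what removes the other, otherwise fatal, resonant set.

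\textbf{Step 1: Fourier expansion.} Write $\ga_0$ in the basis $(e_k)$, with matrix entries $\wh{\ga}(k,l) := \lg e_k, \ga_0 e_l\rg$, so that $\|\ga_0\|_{\FS^2} = \|\wh{\ga}(k,l)\|_{\ell^2_{k,l}}$ (Lemma \ref{lem:HS}). By linearity and density it suffices to take $\ga_0\in\FS^1$. Since $U(t)e_k = e^{-itk^2}e_k$, we get
\begin{equation*}
\rh(U(t)\PN\ga_0\PN U^*(t))(t,x) = \frac{1}{2\pi}\sum_{|k|,|l|\le N}\wh{\ga}(k,l)\,e^{-it(k^2-l^2)}e^{i(k-l)x}.
\end{equation*}
The diagonal terms $k=l$ sum to exactly $(2\pi)^{-1}\Tr(\PN\ga_0\PN)$, which the renormalisation subtracts. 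Hence
\begin{equation*}
\vrh(U(t)\PN\ga_0\PN U^*(t)) = \frac{1}{2\pi}\sum_{k\neq l}\wh{\ga}(k,l)\,e^{-it(k^2-l^2)}e^{i(k-l)x}.
\end{equation*}
Multiplying by $U(t)\PN f = (2\pi)^{-1/2}\sum_{|j|\le N}\wh{f}(j)e^{-itj^2}e^{ijx}$ and applying Parseval on $\T^2$, the square of the left-hand side is, up to constants,
\begin{equation*}
\sum_{m,\tau\in\Z}\Big|\sum_{(k,l,j)\in S(m,\tau)}\wh{\ga}(k,l)\wh{f}(j)\Big|^2 .
\end{equation*}
Here $S(m,\tau)$ is the set of triples with $|k|,|l|,|j|\le N$, $k\ne l$, $k-l+j=m$ and $k^2-l^2+j^2=\tau$.

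\textbf{Step 2: factorisation of the resonance.} With $m=k-l+j$ one has the standard identity
\begin{equation*}
k^2-l^2+j^2-m^2 = 2(l-k)(l-j).
\end{equation*}
Thus on $S(m,\tau)$ we have $(l-k)(l-j) = (\tau-m^2)/2$. Split the sum according to whether $\tau\ne m^2$ or $\tau=m^2$.

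\textbf{Step 3: the case $\tau\neq m^2$.} Here the product $(l-k)(l-j)$ is a fixed nonzero integer of size $O(N^2)$. By the divisor bound there are $\ls_\ep N^\ep$ choices of the pair $(l-k,l-j)$. Together with $m$, such a pair determines $(k,l,j)$ uniquely. Cauchy--Schwarz in the inner sum then gives
\begin{equation*}
\sum_{m,\tau}\Big|\sum_{S(m,\tau)}\cdots\Big|^2 \ls_\ep N^{\ep}\sum_{m,\tau}\sum_{S(m,\tau)}|\wh{\ga}(k,l)|^2|\wh{f}(j)|^2 .
\end{equation*}
Each triple $(k,l,j)$ lies in exactly one set $S(m,\tau)$, so the right-hand side is at most $N^\ep\|\ga_0\|_{\FS^2}^2\|f\|_{L^2}^2$.

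\textbf{Step 4: the case $\tau=m^2$ (the main obstacle).} This is the degenerate resonance, and where the renormalisation is essential. When $\tau=m^2$, either $l=k$ or $l=j$. The branch $l=k$ is precisely the family removed by the renormalisation; without it, this branch would produce $\Tr\ga_0$-type contributions that are unbounded in $\FS^2$. So only $l=j$, $k=m$ remains, and for each $m$ the contribution is
\begin{equation*}
\sum_{l\neq m}\wh{\ga}(m,l)\wh{f}(l) = (\ga_0\PN f)^\wedge(m) - \wh{\ga}(m,m)\wh{f}(m).
\end{equation*}
Summing the squares over $m$ gives at most
\begin{equation*}
2\|\ga_0\|_{\CB}^2\|f\|_{L^2}^2 + 2\sup_m|\wh{\ga}(m,m)|^2\|f\|_{L^2}^2 \ls \|\ga_0\|_{\FS^2}^2\|f\|_{L^2}^2 ,
\end{equation*}
using $\|\cdot\|_{\CB}\le\|\cdot\|_{\FS^2}$. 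Combining Steps 3 and 4 gives the stated bound.

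The only care needed is bookkeeping. The restriction $k\neq l$ must be carried correctly into the degenerate set, and the divisor bound must be applied with an $N$-dependent range, which is exactly what produces the $N^\ep$ loss.
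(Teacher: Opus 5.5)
Your proposal is correct and follows the paper's proof essentially step for step. Both arguments remove the diagonal $k=l$ via the renormalisation, factorise $\tau-m^2$, use the divisor bound with Cauchy--Schwarz off the resonance, and reduce the resonant set to the branch $l=j$, $k=m$. Note that the factorisation should read $2(k-l)(l-j)$, not $2(l-k)(l-j)$; this sign slip is harmless.

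The only difference is cosmetic. On the resonance you bound $\sum_{|m|\le N}\big|\sum_{l\ne m}\wh{\ga}(m,l)\wh{f}(l)\big|^2$ via $\|\ga_0\|_{\CB}\le\|\ga_0\|_{\FS^2}$ together with a diagonal correction. The paper instead applies Cauchy--Schwarz in $l$ directly and obtains $\|\ga_0\|_{\FS^2}^2\|f\|_{L^2}^2$.
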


\begin{proof}	
	Let $\varrho(t,x) = \vrh(U(t)\PN \ga_0 \PN U^*(t))$ and $f \in L^2_x(\T)$.
	Then we can write
	$$
	\varrho(t,x) = \frac{1}{2\pi}\sum_{\substack{|m|,|n|\leq N\\m\neq n}} c_{m,n} e^{i(m-n)x-it(m^2-n^2)}, 
	\quad 
	U(t) \PN f(x) =\frac{1}{\sqrt{2\pi}} \sum_{|l|\le N} a_l e^{ilx-itl^2},
	$$
	where $c_{m,n}$ are the Fourier coefficients of $\gamma_0$. Therefore
	\begin{equation}
		\Ps(t,x):= \varrho(t,x)U(t)\PN f(x)
		= \frac{1}{(2\pi)^{3/2}}
		\sum_{\substack{m\neq n\\ |m|,|n|,|l|\leq N}}
		c_{m,n}a_l
		e^{i(m-n+l)x-it(m^2-n^2+l^2)}.
	\end{equation}
	and
	\begin{equation}\label{eq:3.5}
		\|\Ps\|_{L^2_\tx(\T^2)}^2 = \sum_{(\om,k)\in\Z^2} |\wh{\Ps}(\om,k)|^2, \quad \wh{\Ps}(\om,k)=\sum_{\substack{m\neq n\\
				\om=m^2-n^2+l^2\\
				k=m-n+l}}
		c_{m,n}a_l.
	\end{equation}
	We estimate these coefficients by separating into two cases.
	Set $q:=\omega-k^2$.
	If a triple $(m,n,l)$ contributes to \eqref{eq:3.5}, then
	$$\omega=m^2-n^2+l^2, \quad
	k=m-n+l,$$
	Therefore
	\begin{align}\label{eq:3.6}
		q=m^2-n^2+l^2-(m-n+l)^2=2(m-n)(n-l).
	\end{align}
	
	\noindent \textbf{Resonant case $q=0$.}
	Since $m\neq n$, \eqref{eq:3.6}
	implies $n=l$.
	Then  $\om=m^2-n^2+l^2=m^2$ and $k=m-n+l=m$.
	Applying Cauchy--Schwarz, the contribution of the resonant part is
	\begin{equation}
		\sum_{\substack{(\om,k)\in\Z^2 \\ \omega - k^2 = 0}}|\wh{\Ps}(\om,k)|^2
		= \sum_{m\in \Z} \abs{\sum_n c_{m,n}a_n}^2
		\le \|c\|_{\ell_{n,m}^2}^2 \|a\|_{\ell_n^2}^2 = \|\ga_0\|_{\FS^2}^2 \|f\|_{L^2_x}^2.
	\end{equation}
	
	\noindent \textbf{Non-resonant case $q\ne0$.}
	Let $p:=m-n \ne 0$. By \eqref{eq:3.6}, $q=2p(n-l)$, and one has that $p\mid q/2$. Moreover, once $p$ is fixed, the whole triple $(m,n,l)$ is determined by $(\om,k)\in\Z^2$.
	Indeed, since $l=k-p$ and $n-l=q/(2p)$, we have
	$$n=k-p+\frac{q}{2p},\quad m=n+p.$$
	Therefore for fixed $(\omega, k)$, the number of triples $(m,n,l)$ contributing to \eqref{eq:3.5} is bounded by the number of
	divisors of $q/2$.
	Since $|m|,|n|,|l|\le N$, we know that
	$$|q|=|2(m-n)(n-l)| \ls N^2$$
	and that
	\begin{equation}\tag{eq:3.8}
		\#\Ck{(m,n,l): (m,n,l)\text{ contributes to }\wh{\Ps}(\om,k)}
		\ls d(N^2) \ls_\ep N^\ep,
	\end{equation}
	where $d$ is the divisor function.
	Using Cauchy--Schwarz in each $(\om,k)$ and then summing over all $(\om,k)$, we obtain
	\begin{align}\label{eq:3.9}
		\sum_{(\om,k)\in\Z^2, \; q\ne 0}
		|\wh{\Ps}(\om,k)|^2
		&= 
		\sum_{(\om,k)\in\Z^2, \; q\ne 0}
		\bigg|
		\sum_{\substack{m\neq n,\\
				k=m-n+l\\
				\om=m^2-n^2+l^2}}
		c_{m,n}a_l
		\bigg|^2 \\
		&\ls_\ep
		N^\ep
		\sum_{(\om,k)\in\Z^2, \; q\ne 0}
		\sum_{\substack{m\neq n,\\
				k=m-n+l\\
				\om=m^2-n^2+l^2}}
		|c_{m,n}|^2|a_l|^2                                  \\
		&\le
		N^\ep
		\sum_{m,n,\ell}|c_{m,n}|^2|a_l|^2  
		= N^\ep \|\ga_0\|_{\FS^2}^2 \|f\|_{L^2_x}^2.
	\end{align}
\end{proof}
By Lemma \ref{lem:L2x to L2tx}, we obtain the endpoint estimate. 
\begin{lemma}\label{lem:L3 endpoint}
	Let $\ep>0$. Then we have
	\begin{equation}\label{eq:L3 endpoint}
		\|\vrh(U(t)P_{\le N} \ga_0 P_{\le N} U^*(t))\|_{L^3_\tx(\T^2)} \ls_\ep N^\ep \|\ga_0\|_{\FS^{3/2}}.
	\end{equation}
\end{lemma}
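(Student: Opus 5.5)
The plan is to derive \eqref{eq:L3 endpoint} from Lemma \ref{lem:L2x to L2tx} by a duality/trace argument followed by multilinear complex interpolation. By linearity of $\ga_0 \mapsto \vrh(\cdot)$ and splitting into real/imaginary and positive/negative parts, it suffices to treat self-adjoint, and then nonnegative, finite rank $\ga_0$ (all objects are finite-dimensional after $\PN$). For $j=0,1,2$ write $\varrho_j := \vrh(U(t)\PN\ga_j\PN U^*(t))$.

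\textbf{Step 1: a $\FS^2\times\FS^2\times\FS^1$ bound.} Take $\ga_1 = \sum_k b_k|\psi_k\rg\lg\psi_k|\ge 0$. Summing the square of Lemma \ref{lem:L2x to L2tx} over $f=\psi_k$ with weights $b_k$ gives
\begin{equation*}
\iint_{\T^2} dx\,dt\, |\varrho_0|^2\, \rh(U(t)\PN\ga_1\PN U^*(t)) \ls_\ep N^{2\ep}\|\ga_0\|_{\FS^2}^2\|\ga_1\|_{\FS^1}.
\end{equation*}
Since $\ga_1 \ge 0$, the density is nonnegative, so $|\varrho_1| \le \rh(U\PN\ga_1\PN U^*) + (2\pi)^{-1}\Tr\ga_1$. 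The constant piece is controlled by Theorem \ref{thm:L2 bound renormalised}: $\iint|\varrho_0|^2 \ls \|\ga_0\|_{\FS^2}^2$. Hence
\begin{equation*}
\iint |\varrho_0|^2|\varrho_1| \ls_\ep N^{2\ep}\|\ga_0\|_{\FS^2}^2\|\ga_1\|_{\FS^1}.
\end{equation*}
By polarisation and Cauchy--Schwarz, the symmetric trilinear form $\Lambda(\ga_0,\ga_1,\ga_2) := \iint \varrho_0\varrho_1\varrho_2$ then satisfies $|\Lambda| \ls N^{2\ep}\|\ga_0\|_{\FS^2}\|\ga_1\|_{\FS^2}\|\ga_2\|_{\FS^1}$, and by symmetry the same bound holds with the $\FS^1$ slot in any position.

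\textbf{Step 2: interpolation.} Apply multilinear complex interpolation to $\Lambda$ between the three bounds $\FS^1\times\FS^2\times\FS^2$, $\FS^2\times\FS^1\times\FS^2$ and $\FS^2\times\FS^2\times\FS^1$, which are bounds on the Schatten scale. Averaging the reciprocal exponents gives $\tfrac13(1+\tfrac12+\tfrac12)=\tfrac23$ in each slot, so
\begin{equation*}
|\Lambda(\ga_0,\ga_1,\ga_2)| \ls N^{2\ep}\prod_j \|\ga_j\|_{\FS^{3/2}}.
\end{equation*}

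\textbf{Step 3: the main obstacle.} The remaining difficulty is to pass from this trilinear bound to $\|\varrho\|_{L^3}^3 = \iint \varrho^2|\varrho|$: the absolute value is not of the form $\varrho_{\ga_2}$, so one cannot simply take $\ga_0=\ga_1=\ga_2$. I would try two routes.

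\emph{Route (a).} Interpolate the sublinear quantity directly. Fix $\ga_0\ge0$ and use the Step 1 inequality $\iint \varrho_0^2(\rh_1 + c_1) \ls N^{2\ep}\|\ga_0\|_{\FS^2}^2\|\ga_1\|_{\FS^1}$, together with the companion bound $\iint |\varrho_0|\,|\varrho_1|^2$ obtained by exchanging roles. These are bounds for the positive trilinear quantity $(\ga_0,\ga_1,\ga_2) \mapsto \iint \varrho_0\varrho_1(\rh_2+c_2)$. Complex interpolation with the analytic families $\ga_j^{z}$, as in \cite{FLLS24}, should still apply because $\ga_2\ge0$ enters linearly and positively.

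\emph{Route (b), fallback.} Use duality $\|\varrho\|_{L^3} = \sup_{\|W\|_{L^{3/2}}\le1}\iint W\varrho$, with $W=|\varrho|\varrho/\|\varrho\|_3^2$. Write $\iint W\varrho = \Tr(\ga_0\int U^*\PN (W-\lg W\rg)\PN U\,dt)$, using $\lg\varrho\rg=0$. It then suffices to show
\begin{equation*}
\Big\|\int U^*\PN(W-\lg W\rg)\PN U\,dt\Big\|_{\FS^3} \ls N^\ep\|W\|_{L^{3/2}}.
\end{equation*}
To obtain this, compute $\|\cdot\|_{\FS^3}^3$ as a trace of a triple product and reduce it, via the same divisor-counting as in Lemma \ref{lem:L2x to L2tx}, to the already-established $\FS^2$-type estimates. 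I expect Route (a) to be the one that closes, and this step to be where the real care is needed.
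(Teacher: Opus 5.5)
Your Steps 1 and 2 are correct and follow the paper's structure: the paper also proves the endpoint bounds $\FS^1\times\FS^2\times\FS^2$ (and permutations) and interpolates to $(\FS^{3/2})^3$. You derive the endpoint differently. You sum the square of Lemma \ref{lem:L2x to L2tx} over the eigenvectors of a nonnegative $\ga_1$ and then apply Cauchy--Schwarz with weight $|\varrho_2|$. The paper instead writes the unrenormalised part as $\Tr[\PN\int U^*(t)(T_N(\ga_1)e^{i\theta}T_N(\ga_2))U(t)\,dt\,\PN\ga_0]$ and bounds it by $\|\ga_0\|_{\FS^1}$ times the $L^2_x\to L^2_\tx$ norms of $T_N(\ga_j)U(t)\PN$; the constant term goes through Theorem \ref{thm:L2 bound renormalised}. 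The two are equivalent. Interpolation needs the endpoint on all of $\FS^1$, so split a general $\ga_1$ into four nonnegative pieces inside Step 1.

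The gap is Step 3, which you leave open, and the obstacle there is self-inflicted. Your Cauchy--Schwarz in Step 1 bounds $\iint|\varrho_0||\varrho_1||\varrho_2|$, not just $|\Lambda|$. That is exactly the paper's estimate for $\|T_N(\ga_0)T_N(\ga_1)T_N(\ga_2)\|_{L^1_\tx}$, and it is what should be interpolated.

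Concretely, fix $h$ with $\|h\|_{L^\infty}\le 1$ and set $\Lambda_h(\ga_0,\ga_1,\ga_2):=\iint h\,\varrho_0\varrho_1\varrho_2$. This form is trilinear and satisfies all three endpoint bounds with constants independent of $h$. Interpolation therefore gives $|\Lambda_h|\ls_\ep N^{2\ep}\prod_j\|\ga_j\|_{\FS^{3/2}}$ uniformly in $h$. Now take $\ga_0=\ga_1=\ga_2=\ga$, and let $h:=|\varrho|^3/\varrho^3$ on $\{\varrho\neq 0\}$ and $h:=0$ elsewhere. Then $\Lambda_h(\ga,\ga,\ga)=\|\varrho\|_{L^3_\tx}^3$, which is the lemma. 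That $h$ depends on $\ga$ does not matter, because the bound is uniform in $h$.

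Neither of your proposed routes would do this job.

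In Route (a), replacing $|\varrho_2|$ by $\rh_2+c_2=\varrho_2+2c_2$ brings back the unrenormalised constant $c_2=(2\pi)^{-1}\Tr(\PN\ga_2\PN)$. This constant is only bounded by $N^{1/2}\|\ga_2\|_{\FS^2}$, with equality for $\ga_2=\PN$. So the form $\iint\varrho_0\varrho_1(\rh_2+c_2)$ loses $N^{1/2}$, rather than $N^\ep$, at the two endpoints where $\ga_2\in\FS^2$. That is precisely the loss the renormalisation is meant to remove.

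Route (b) is just the dual restatement of the lemma: an $\FS^3$ bound for $\PN\int U^*(t)WU(t)\,dt\,\PN$ with mean-zero $W\in L^{3/2}$. The proposed ``reduction via divisor counting'' is the entire content still to be proved.
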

\begin{proof}
	Let
	\begin{equation}
		T_N(\ga_0) := \vrh(U(t)P_{\le N} \ga_0 P_{\le N} U^*(t)).
	\end{equation}
	It suffices to prove the trilinear estimate
	\begin{equation}
		\|T_N(\ga_0) T_N(\ga_1) T_N(\ga_2)\|_{L^1_\tx(\T^2)} \ls_\ep N^\ep \|\ga_0\|_{\FS^{3/2}} \|\ga_1\|_{\FS^{3/2}} \|\ga_2\|_{\FS^{3/2}}.
	\end{equation}
	We can write
	\begin{align}
		\|T_N(\ga_0) T_N(\ga_1) T_N(\ga_2)\|_{L^1_\tx(\T^2)}
		&\le \|\rh(U(t)P_{\le N}\ga_0 P_{\le N} U^*(t)) T_N(\ga_1) T_N(\ga_2)\|_{L^1_\tx(\T^2)} \\
		&\quad + \frac{1}{2\pi} |\Tr(P_{\le N}\ga_0 P_{\le N})| \| T_N(\ga_1) T_N(\ga_2)\|_{L^1_\tx(\T^2)} =: \SA + \SB.
	\end{align}
	For $\SA$, Lemma \ref{lem:L2x to L2tx} implies that
	\begin{align}
		\SA &= \iint_{\T^2} \rh(U(t)P_{\le N} \ga_0 P_{\le N} U^*(t)) T_N(\ga_1)(t,x) e^{i\theta(t,x)} T_N(\ga_2)(t,x) dxdt \\
		&= \Tr\Dk{P_{\le N}\int_\T U^*(t) \K{T_N(\ga_1)(t,x) e^{i \theta(t,x)} T_N(\ga_2)(t,x)} U(t)dt P_{\le N} \ga_0}  \\
		&\le \prod_{j=1,2} \No{T_N(\ga_j)(t,x)U(t)P_{\le N}}_{L^2_x \to L^2_\tx} \|\ga_0\|_{\FS^1}
		\ls_\ep N^\ep   \|\ga_0\|_{\FS^1} \|\ga_1\|_{\FS^2} \|\ga_2\|_{\FS^2}, 
	\end{align}
	where $\th(t,x)$ is a proper real-valued function.
	For $\SB$, it follows from Theorem \ref{thm:L2 bound renormalised} that 
	\begin{equation}
		\SB \ls \|\ga_0\|_{\FS^1} \|T_N(\ga_1)\|_{L^2_\tx(\T^2)} \|T_N(\ga_2)\|_{L^2_\tx(\T^2)} \ls \|\ga_0\|_{\FS^1} \|\ga_1\|_{\FS^2} \|\ga_2\|_{\FS^2}.
	\end{equation}
	Therefore
	\begin{equation}\label{eq:trilinear 1}
		\|T_N(\ga_0)T_N(\ga_1)T_N(\ga_2)\|_{L^1_\tx(\T^2)} \ls_\ep N^\ep \|\ga_0\|_{\FS^1} \|\ga_1\|_{\FS^2} \|\ga_2\|_{\FS^2}.
	\end{equation}
	Arguing similarly, we obtain
	\begin{align}
		&\|T_N(\ga_0)T_N(\ga_1)T_N(\ga_2)\|_{L^1_\tx(\T^2)} \ls_\ep N^\ep \|\ga_0\|_{\FS^2} \|\ga_1\|_{\FS^1} \|\ga_2\|_{\FS^2}, \label{eq:trilinear 2}\\
		&\|T_N(\ga_0)T_N(\ga_1)T_N(\ga_2)\|_{L^1_\tx(\T^2)} \ls_\ep N^\ep \|\ga_0\|_{\FS^2} \|\ga_1\|_{\FS^2} \|\ga_2\|_{\FS^1}. \label{eq:trilinear 3}
	\end{align}
	By interpolating between \eqref{eq:trilinear 1}, \eqref{eq:trilinear 2}, and \eqref{eq:trilinear 3}, we obtain \eqref{eq:L3 endpoint}.
\end{proof}
We now prove Theorem \ref{thm:L3 bound renormalised}.
\begin{proof}[Proof of Theorem \ref{thm:L3 bound renormalised}]
We recall the notation
\begin{equation*}
\ga_0 := \sum_{n=1}^\I a_n |\phi_n \rg \lg\phi_n|, \quad \varrho := \vrh(U(t)P_{\le N}\ga_0 P_{\le N}U^*(t)).
\end{equation*}
We first prove the sufficiency portion of the theorem. We interpolate between \eqref{eq:pointwise} and \eqref{eq:L2 bound renormalised} to get
\begin{equation}\label{eq:L3 1/3 3}
	\|\varrho\|_{L^3_\tx(\T^2)} \ls N^{1/3} \|\ga_0\|_{\FS^3}.
\end{equation}
Interpolating between \eqref{eq:L3 1/3 3} and \eqref{eq:L3 endpoint} yields \eqref{eq:L3 bound renormalised}.

We now prove the necessity portion of the theorem. The proof is similar to the necessity portion of Theorem \ref{thm:L2 bound renormalised}, but for clarity, we include the details. Recall the basis functions $e_n$ defined in \eqref{en_defintion}. Recall the orthonormal system defined by $\phi_n(x)=2^{-1/2} (e_{n}(x) + e_{-n}(x))$. We consider the sequence $a_n := 1$ if $1\le n \le N$ and $a_n := 0$ otherwise. Then for $1\le n \le N$, we have
\begin{equation}
    U(t)P_{\le N}\phi_n(x) = U(t)\phi_n(x)  = \frac{e^{-in^2t}}{\sqrt{2}}(e_{n}(x) + e_{-n}(x)).
\end{equation}
It follows that
\begin{align}
	|U(t)P_{\le N}\phi_n(x)|^2 = \frac{1}{2\pi}(1+\cos(2nx)) \quad \text{ and } \quad \|P_{\le N}\phi_n(x)\|_{L^2_x(\T)}^2 = 1,
\end{align}
which imply
\begin{equation}
\varrho = \frac{1}{2\pi}\sum_{n=1}^N \cos(2nx)= \frac{1}{4\pi}\K{\frac{{\sin( (2N+1)x)}}{\sin(x)} - 1}.
\end{equation}
Therefore, we obtain
\begin{align}
	\|\varrho\|_{L^3_\tx(\T^2)}
	&\gs \No{\frac{\sin((2N+1)x)}{\sin(x)}}_{L^3_\tx(\T^2)} - 1 \gs N \No{ { \frac{\sin((2N+1)x)}{(2N+1)x}} \frac{x}{\sin(x)}}_{L^3_x([0,(10N)^{-1}])}  - 1\\
	&\gs N \|1\|_{L^3_x([0,(10N)^{-1}])}  - 1 \sim N^{2/3} -1.
\end{align}
Therefore if \eqref{eq:L3 bound renormalised} holds, we have
\begin{equation}
	N^{2/3} - 1 \ls N^{\si+1/\al},
\end{equation} 
which implies $2/3 \le \si + 1/\al$.
\end{proof}

\section{Optimal well-posedness of the NLS system}
\label{SEC:WP:NLSS}
In this section, we prove Theorems \ref{thm:cubic optimal well posedness} and \ref{thm:cubic optimal well posedness renormalised}.
More precisely, we prove Propositions \ref{prop:cubic NLSS wellposed} and \ref{prop:cubic NLSS renormalised wellposed}. Note that Propositions \ref{prop:cubic NLSS wellposed} and \ref{prop:cubic NLSS renormalised wellposed} are concerned with local well-posedness. 
However, global well-posedness immediately follows from the blowup alternatives in these propositions and the unitarity of the evolution $\|\ga(t)\|_{\FS^\al} = \|\ga_0\|_{\FS^\al}$ (see Definitions \ref{def:solution} and \ref{def:solution renormalised}).
Let $E_R^\al:=\{\ga_0:\|\ga_0\|_{\FS^\al}\le R\}$.
\begin{proposition}[Precise statement of Theorem \ref{thm:cubic optimal well posedness}]\label{prop:cubic NLSS wellposed}
\leavevmode
\begin{itemize}
	\item[$(i)$] (Well-posedness.) For any $\ga_0 \in \FS^1$, there exists $T=T(\|\ga_0\|_{\FS^1})>0$ and a unique solution $\ga(t)\in C([0,T];\FS^1)$ to \eqref{eq:NLSS} such that $\rh_\ga \in L^2_t([0,T];L^2_x(\T))$.
	Moreover, the data-to-solution map
	\begin{equation}
		\mathcal{S}: E_R^1 \ni \ga_0 \mapsto (\ga,\rh_\ga) \in C([0,T];\FS^1)\times L^2_t([0,T];L^2_x(\T))
	\end{equation}
	is Lipschitz continuous for any $R>0$ and $T=T(R)>0$. Finally, we have the blowup alternative; we have
	\begin{equation}
		T_{\mathrm{max}} < \I \implies \lim_{t \nearrow T_{\mathrm{max}}} \|\ga(t)\|_{\FS^1} = \I,
	\end{equation}
	where $T_{\text{max}}>0$ is the maximal time that the solution exists.

    \item[$(ii)$] (Ill-posedness.) The data-to-density map constructed above
    \begin{equation*}
    \mathcal{S}_2 :E_R^1 \ni \ga_0 \mapsto  \rho_\ga \in L^2_t([0,T]; L^2_x(\T))  
    \end{equation*}
	cannot be extended to a continuous map $E_R^\al \to L^2_t([0,T]; L^2_x(\T))$ for $\al>1$.

	\end{itemize}
\end{proposition}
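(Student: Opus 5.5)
For part $(i)$, the plan is to run a fixed point argument in the density $\rh$ alone, as in \cite{LS14,CHP18}, rather than in the pair $(\ga,\rh_\ga)$. Given $V\in L^2_t([0,T];L^2_x(\T))$ real-valued, let $U_V(t)$ be the propagator of $i\pl_t u=(-\De\pm V)u$, constructed in Appendix \ref{SEC:uv:construction}. Set $\ga_V(t):=U_V(t)\ga_0U_V^*(t)$; this is unitary conjugation, so $\|\ga_V(t)\|_{\FS^1}=\|\ga_0\|_{\FS^1}$. The solution is then $\ga=\ga_V$ for a fixed point $V=\Phi(V):=\rh(\ga_V)$.

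To show $\Phi$ maps a ball of $L^2_{t,x}([0,T]\times\T)$ to itself, I would write $\ga_0=\sum a_n|\phi_n\rg\lg\phi_n|$ and use the triangle inequality:
\[
\|\rh(\ga_V)\|_{L^2_{t,x}}\le\sum|a_n|\,\|U_V(t)\phi_n\|_{L^4_{t,x}}^2 .
\]
This reduces everything to a one-body estimate $\|U_V(t)\phi\|_{L^4_{t,x}([0,T]\times\T)}\ls\|\phi\|_{L^2}$, valid when $\|V\|_{L^2_{t,x}}$ is small or $T$ is small. I would prove it from the Duhamel formula and \eqref{eq:L4}. By duality, \eqref{eq:L4} gives $\|\int_0^tU(t-s)F(s)ds\|_{L^4_{t,x}}\ls\|F\|_{L^{4/3}_{t,x}}$. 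The Christ--Kiselev lemma is not available at the $L^4$ endpoint pair, so I would check this bound directly or instead use $X^{s,b}$ spaces. Combining with Hölder,
\[
\|Vu\|_{L^{4/3}}\le\|V\|_{L^2}\|u\|_{L^4}.
\]

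Here lies the obstacle: $\|V\|_{L^2_{t,x}([0,T])}$ does not become small as $T\to0$ unless $V$ is a fixed function. So I would work on a ball $\{\|V\|_{L^2_{t,x}([0,T])}\le\de\}$. For this I need $\|\rh(U(t)\ga_0U^*(t))\|_{L^2_{t,x}([0,T])}$ to be small for small $T$. That holds by Proposition \ref{prop:L2 OStri 1D torus paraphrase} together with absolute continuity of the integral, so $T$ depends on the profile of $\ga_0$. To get $T=T(R)$ uniformly, I would split $\ga_0$ into a finite-rank smooth part, which is $L^\infty$-bounded, plus an $\FS^1$-small remainder. The contraction estimate follows the same way, using differences $U_V-U_W$ via Duhamel. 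Lipschitz continuity in $\ga_0$ is immediate from linearity of $\ga_0\mapsto\rh(U_V\ga_0U_V^*)$. Uniqueness holds in the class $\rh_\ga\in L^2_{t,x}$. The blowup alternative comes from iterating with $T$ depending only on $\|\ga(t)\|_{\FS^1}$, which is conserved; this in fact gives globality.

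For part $(ii)$, I would use the data $\ga_0^N=N^{-1/\al}\sum_{n=1}^N|e_n\rg\lg e_n|$, which is bounded in $\FS^\al$ and tends to $0$ in $\FS^\al$ when $\al>1$. Its density is the constant $\rh=N^{1-1/\al}/(2\pi)$. The nonlinearity $\rh_\ga$ is then constant, so $[\rh_\ga,\ga]=0$ and $\ga(t)=U(t)\ga_0^NU^*(t)$, with $\rh_\ga\equiv N^{1-1/\al}/(2\pi)$ exactly. Its $L^2_{t,x}$ norm diverges, while $\mathcal S_2(0)=0$, which contradicts continuity. The precise setting of $E_R^\al$ requires bounded data: I would rescale to $c\,\ga_0^N$ with norm at most $R$, which still gives $\|\rh\|\to\infty$. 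If a statement with norms tending to zero is needed, I would use $\ga_0^N$ scaled by $N^{-(1-1/\al)}$ times a fixed $\FS^1$ perturbation to show the map is not continuous at $0$.
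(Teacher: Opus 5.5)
\textbf{Part $(i)$.} The gap is exactly at the obstacle you flag. With only the global bound \eqref{eq:L4} and a small ball $\{\|V\|_{L^2_{t,x}([0,T]\times\T)}\le\de\}$, the fixed point closes only for a time $T$ that depends on the \emph{profile} of $\ga_0$. For large data you in fact need $\sum_n|a_n|\,\|U(t)\phi_n\|^2_{L^4_{t,x}([0,T]\times\T)}\to0$, which follows from dominated convergence, to absorb the terms containing $D_V$. Smallness of $\|\rh(U(t)\ga_0U^*(t))\|_{L^2_{t,x}}$ alone is not enough there.

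Your splitting does not remove this profile dependence. The $L^\infty$ bound on the finite-rank smooth part depends on $\ga_0$, not on $R$, and $E_R^1$ is not compact. For example, let $\ga_0=|\phi\rg\lg\phi|$ with $\phi$ the $L^2$-normalised Dirichlet kernel of order $K$. Then $\rh_{\ga_0}$ carries a fixed fraction of its mass on $\{|x|\le 1/K\}$. So any $\ga^{(1)}$ with $\|\ga_0-\ga^{(1)}\|_{\FS^1}\le\eta$ small has $\|\rh_{\ga^{(1)}}\|_{L^\infty}\gs K$.

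Consequently your scheme does not give three of the claims. It does not give $T=T(\|\ga_0\|_{\FS^1})$. It does not give Lipschitz continuity on all of $E_R^1$ with a common $T(R)$. It does not give the blowup alternative, which you derive by iterating with $T$ depending only on the conserved $\|\ga(t)\|_{\FS^1}$.

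The missing idea is the time-localised estimate of Lemma \ref{lem:Stri inhomogeneous}: $\|U(t)\phi\|_{L^4_{t,x}([0,T]\times\T)}\ls T^\th\|\phi\|_{L^2}$ uniformly in $\phi$, with the same gain for the Duhamel term. It comes from $\|u\|_{L^4_{t,x}}\ls\|u\|_{X^{0,3/8}_T}\ls T^\th\|u\|_{X^{0,7/16}_T}$. This gives $\|\rh(U_V(t)\ga_0U_V^*(t))\|_{L^2_{t,x}([0,T]\times\T)}\le\Ph(T^\th\|V\|_{L^2_{t,x}})T^{2\th}\|\ga_0\|_{\FS^1}$ (Corollary \ref{cor:Stri V}). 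It also gives a difference bound with a factor $T^{2\th}$ (Lemma \ref{lem:difference}). The paper can therefore contract on a ball of \emph{arbitrary} radius $R$, with $T$ depending only on $R$ and $\|\ga_0\|_{\FS^1}$.

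Incidentally, the Christ--Kiselev lemma does apply to the retarded $L^{4/3}_{t,x}\to L^4_{t,x}$ estimate, since the time exponents $4/3<4$ differ; the paper uses it.

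\textbf{Part $(ii)$.} Your normalisation is off. For every $N$,
\[
\Big\|N^{-1/\al}\sum_{n=1}^N|e_n\rg\lg e_n|\Big\|_{\FS^\al}=1,
\]
so these data do not tend to $0$. Showing that $\mathcal S_2$ is unbounded on a bounded set does not by itself contradict continuity. Your fallback is the correct fix: with prefactor $N^{-1}$ the data tend to $0$ in $\FS^\al$, while the density stays identically $1/(2\pi)\neq0=\mathcal S_2(0)$. The paper instead uses the prefactor $N^{-1/\al-\ep}$. This gives $\FS^\al$ norm $N^{-\ep}\to0$ and $\|\rh_{\ga_N}\|_{L^2_{t,x}}\sim N^{1-1/\al-\ep}\to\infty$.
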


\begin{proposition}[Precise statement of Theorem \ref{thm:cubic optimal well posedness renormalised}]\label{prop:cubic NLSS renormalised wellposed}
\leavevmode
	\begin{itemize}
		\item[$(i)$] (Well-posedness.) 	For any $\ga_0 \in \FS^2$, there exists $T=T(\|\ga_0\|_{\FS^2})>0$ and a unique solution $\ga(t)\in C([0,T];\FS^2)$ to \eqref{eq:NLSS'} such that $\vrh_\ga \in L^2_t([0,T];L^2_x(\T))$.
		Moreover, the data-to-solution map
		\begin{equation}
			\mathcal{S}: E_R^2 \ni \ga_0 \mapsto (\ga,\vrh_\ga) \in C([0,T];\FS^2)\times L^2_t([0,T];L^2_x(\T))
		\end{equation}
		is Lipschitz continuous for any $R>0$ and $T=T(R)>0$. Finally, we have the blowup alternative; we have
		\begin{equation}
			T_{\mathrm{max}} < \I \implies \lim_{t \nearrow T_{\mathrm{max}}} \|\ga(t)\|_{\FS^2} = \I,
		\end{equation}
		where $T_{\text{max}}>0$ is the maximal time that the solution exists.
    \item[$(ii)$] (Ill-posedness.) The data-to-density map constructed above
    \begin{equation*}
    \mathcal{S}_2 :E_R^2 \ni \ga_0 \mapsto  \overline{\rho}_\ga \in L^2_t([0,T]; L^2_x(\T))  
    \end{equation*}
	cannot be extended to a continuous map $E_R^\al \to L^2_t([0,T]; L^2_x(\T))$ for $\al>2$.	
	\end{itemize}
\end{proposition}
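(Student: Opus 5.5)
Both parts rest on the same two ingredients: the dual estimate of Lemma \ref{lem:L2 dual estimate} for well-posedness, and the explicit cosine example from the proof of Theorem \ref{thm:L2 bound renormalised} for ill-posedness.

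\textbf{Well-posedness: fixed point in the renormalised density.} I would set up a contraction for $V:=\vrh_\ga$ alone, in the space of mean-zero $V\in L^2_\tx([0,T]\times\T)$, rather than for $\ga$. Given a real-valued $V$, let $U_V(t)$ be the propagator of $i\pl_t u=(-\De\pm V)u$, constructed in Appendix \ref{SEC:uv:construction}. Set $\ga(t)=U_V(t)\ga_0U_V^*(t)$ and $\Phi(V):=\vrh(U_V(t)\ga_0U_V^*(t))$. Since $[c,\ga]=0$, a fixed point $V=\Phi(V)$ gives a solution of \eqref{eq:NLSS'}. Unitarity of $U_V$ gives $\ga\in C([0,T];\FS^2)$ with $\|\ga(t)\|_{\FS^2}=\|\ga_0\|_{\FS^2}$.

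To estimate $\Phi$, I would expand by Duhamel,
\begin{equation*}
U_V(t)=U(t)\mp i\int_0^t U(t-s)V(s)U_V(s)\,ds.
\end{equation*}
Then $\Phi(V)-\vrh(U(t)\ga_0U^*(t))$ is a sum of terms of first and higher order in $V$. I would bound every term by duality: test against a mean-zero $W\in L^2_\tx$ via Lemma \ref{duality_zero_mean_lemma}, rewrite the pairing with Lemma \ref{duality_lemma} and cyclicity of the trace, and apply Lemma \ref{lem:L2 dual estimate}. The zero-order term is handled by Theorem \ref{thm:L2 bound renormalised}. The first-order term should be controlled by an operator estimate of the form
\begin{equation*}
\Big\|\int_0^T U^*(t)W(t)U(t)\int_0^t U^*(s)V(s)U(s)\,ds\,dt\Big\|_{\FS^2}\ls T^{\theta}\,\|W\|_{L^2_\tx}\|V\|_{L^2_\tx}\quad\text{for some }\theta>0.
\end{equation*}
To prove it, I would factor out the time truncation and use the $\FS^2$ bound of Lemma \ref{lem:L2 dual estimate} on the outer integral, together with an $\FS^\I$ bound on the inner integral that follows from \eqref{eq:L4} and its dual. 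In the mean-zero setting, Strichartz estimates for $U_V$ (Section \ref{SEC:UV:STRICHARTZ}) are needed so that the full series closes with a small power of $T$.

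The contraction estimate then comes from the same multilinear bounds applied to $\Phi(V_1)-\Phi(V_2)$, and also gives Lipschitz continuity of $\ga_0\mapsto(\ga,\vrh_\ga)$. Uniqueness holds in the class $\vrh_\ga\in L^2_{t,x}$. The blowup alternative follows because $T$ depends only on $\|\ga_0\|_{\FS^2}$, and that norm is conserved, so the solution is global.

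\textbf{Ill-posedness for $\al>2$.} I would use the data from the necessity part of Theorem \ref{thm:L2 bound renormalised}: $\ga_0^N=N^{-1/\al}\sum_{n\le N}|\phi_n\rg\lg\phi_n|$ with $\phi_n=2^{-1/2}(e_n+e_{-n})$. These satisfy $\|\ga_0^N\|_{\FS^\al}=1$, and $\ga_0^N\to0$ in $\FS^{\al'}$ for $\al'>\al$. The key observation is that these are stationary solutions. Each $|U(t)\phi_n|^2$ is time-independent, and the resulting $\vrh=(2\pi N^{1/\al})^{-1}\sum_{n\le N}\cos(2nx)$ is even, so the operator $-\De\pm\vrh$ preserves the even sector. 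I would look for exact solutions of the form $\ga(t)=e^{-it(-\De\pm\vrh)}\ga_0^N e^{it(-\De\pm\vrh)}$; exact stationarity may fail, so the fallback is a perturbative expansion $\vrh_\ga=\vrh_{\mathrm{lin}}+O(\text{higher order})$ on a short time interval.

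For the scaling, multiply the data by a small amplitude $\lambda$. The linear density is then $\lambda N^{1/2-1/\al}$ in $L^2_{t,x}$, which blows up as $N\to\infty$ for $\al>2$. Hence along a sequence $\lambda=\lambda_N\to0$ chosen so that $\lambda_N N^{1/2-1/\al}\sim1$, the data tend to $0$ in $E_R^\al$ while $\vrh_\ga$ stays of size $\sim1$. This contradicts continuity at $0$, whose image is $\vrh=0$. The main obstacle is showing that the nonlinear corrections do not cancel the linear density at order one. I would control them with the well-posedness bounds, which apply because the data are small in $\FS^2$ only after adjusting $\lambda_N$ suitably, or by exploiting the near-stationarity noted above.
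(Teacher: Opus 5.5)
Your well-posedness argument follows the paper's route: a contraction for the renormalised density $\varrho\in L^2_\tx$ with $\Ga[\varrho]=\vrh(U_{\pm\varrho}(t)\ga_0U_{\pm\varrho}^*(t))$. The self-map property is Lemma \ref{lem:averaged Strichartz V} and the contraction is Lemma \ref{lem:difference}. Your first-order bound ($\FS^2$ on the outer integral via Lemma \ref{lem:L2 dual estimate}, $\FS^\I$ on the inner one via \eqref{eq:L4}) is essentially how the paper treats $\varrho_1$. The quadratic term $\rh(D_V\ga_0D_V^*)$ needs the further time-ordering argument given in that lemma.

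One caution on removing the time truncation. A Christ--Kiselev decomposition restricts $W$ to time subintervals, and this destroys $\lg W\rg=0$. Test only against $W$ with $\lg W(t)\rg_x=0$ for a.e.\ $t$. This is legitimate because $\rh(U_V\ga_0U_V^*)(t)-\rh(U\ga_0U^*)(t)$ has zero spatial mean at every $t$.

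\textbf{The gap in part (ii).} The genuine gap is at the step you yourself call the main obstacle, and neither proposed way around it works.

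First, $\ga_0^N$ is not stationary. It is a multiple of the projection onto $\mathrm{span}\{\cos(mx):1\le m\le N\}$. Multiplication by $\vrh_{\ga_0^N}\propto\sum_{n\le N}\cos(2nx)$ does not preserve this span: $\cos x\cos(2Nx)$ already produces $\cos((2N+1)x)$. Hence $[-\De\pm\vrh_{\ga_0^N},\ga_0^N]\neq0$. Invariance of the infinite-dimensional even sector is irrelevant, and ``near-stationarity'' is never quantified uniformly in $N$. The stationary example $\sum_n|e_n\rg\lg e_n|$ that works for \eqref{eq:NLSS} has $\vrh\equiv0$ here, which is why cosines are needed.

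Second, the data cannot be made small in $\FS^2$. With $\lambda_NN^{1/2-1/\al}=1$ your data are exactly $\ga_{0,N}:=N^{-1/2}\sum_{n\le N}|\phi_n\rg\lg\phi_n|$, with $\FS^2$-norm $1$. By Theorem \ref{thm:L2 bound renormalised}, any rescaling that makes the data small in $\FS^2$ also makes the linear density small, and the contradiction disappears.

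\textbf{The missing idea.} The smallness must come from the potential, not from the data. Argue by contradiction, as the paper does. If a continuous extension to $E_R^\al$ existed, then $\varrho_N:=\CS_2(\ga_{0,N})\to\CS_2(0)=0$ in $L^2_\tx([0,T]\times\T)$. Since $\ga_{0,N}\in E_R^2$ for $R\ge1$, the fixed-point identity $\varrho_N=\vrh(U_{\pm\varrho_N}(t)\ga_{0,N}U_{\pm\varrho_N}^*(t))$ holds.

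Apply Lemma \ref{lem:difference} with $V=\pm\varrho_N$ and $W=0$. It bounds the distance from $\vrh(U(t)\ga_{0,N}U^*(t))$ by $\Ph(T^\theta\|\varrho_N\|)T^{2\theta}\|\varrho_N\|\,\|\ga_{0,N}\|_{\FS^2}$. This tends to $0$, because the estimate is linear in $\|V-W\|_{L^2_\tx}$ and only needs $\|\ga_{0,N}\|_{\FS^2}=1$ to stay bounded.

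The linear renormalised density is time-independent with $L^2_x$-norm $(4\pi)^{-1/2}$. So its $L^2_\tx([0,T]\times\T)$-norm is $\sim1$, and you obtain $1\ls o(1)$.

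A direct version, closer to your instinct, also works. On $[0,T']$, the uniform bound on $\|\varrho_N\|_{L^2_\tx}$ from the contraction, together with Lemma \ref{lem:difference}, makes the correction $O(T'^{2\theta})\|\varrho_N\|_{L^2_\tx([0,T']\times\T)}$. The linear term has size $\sim T'^{1/2}$ uniformly in $N$. So for small $T'$, a bootstrap gives $\|\varrho_N\|\gs T'^{1/2}$ while $\ga_{0,N}\to0$ in $\FS^\al$.
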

\begin{remark}
For the precise definitions of solutions, we direct the reader to Definitions \ref{def:solution} and \ref{def:solution renormalised}.
\end{remark}
\subsection{Strichartz estimates for \texorpdfstring{$U_V(t,s)$}{UV(t,s)}}
\label{SEC:UV:STRICHARTZ}
Before proceeding with the proofs of Propositions \ref{prop:cubic NLSS wellposed} and \ref{prop:cubic NLSS renormalised wellposed}, we first need to prove Strichartz estimates for the propagator of the linear Schr\"odinger equation with time-dependent potential $V:[0,T]\times\T \to \R$ given by
\begin{equation}
\label{eq:Schro_pot}
	i\pl_t u + \De u - V(t,x)u = 0, \quad u(s)=\phi.
\end{equation}
Let $u(t)=U_V(t,s)\phi$ denote the solution to the linear equation \eqref{eq:Schro_pot}. Moreover, we will write $U_V(t):= U_V(t,0)$. We note that we have the unique existence of $U_V(t,s)$ for any $V \in L^2_\tx([0,T]\times\T)$ and $t,s\in [0,T]$, see Appendix \ref{sec:appendix}. Before proving Strichartz estimates corresponding to $U_V$, we need the following generalisation of \eqref{eq:L4}.
\begin{lemma}\label{lem:Stri inhomogeneous}
For some $\theta > 0$, we have
	\begin{align}
		&\|U(t)\phi\|_{L^4_\tx([0,T]\times\T)} \ls T^{\theta} \|\phi\|_{L^2_x(\T)}, \label{eq:L4 refine}\\
		&\No{\int_0^t d\ta \, U(t-\ta)u(\ta)}_{C_t([0,T];L^2_x(\T)) \cap L^4_\tx([0,T]\times\T)} \ls T^{\theta} \|u\|_{L^{4/3}_\tx([0,T]\times\T)}.\label{eq:Stri inhomogeneous}
	\end{align}
\end{lemma}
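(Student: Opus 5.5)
The plan is to obtain the gain $T^\theta$ from Hölder in time combined with a slightly better Strichartz estimate than \eqref{eq:L4}. The standard improvement is Bourgain's $L^4$ bound in $X^{s,b}$ spaces: $\|u\|_{L^4_{t,x}(\T\times\T)}\ls \|u\|_{X^{0,3/8}}$. Since $3/8<1/2$, localising a free solution to a time interval of length $T$ and measuring it in $X^{0,b}$ with $3/8<b<1/2$ produces a positive power of $T$. Concretely, for $0<T\le 1$ we fix a smooth cutoff $\eta$ and set $\eta_T(t)=\eta(t/T)$. The standard estimate $\|\eta_T(t)U(t)\phi\|_{X^{0,b}}\ls T^{1/2-b}\|\phi\|_{L^2}$ then holds, because $\|\eta_T\|_{H^b_t}\ls T^{1/2-b}$.

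\emph{Step 1 (homogeneous estimate \eqref{eq:L4 refine}).} Combining Bourgain's bound $\|u\|_{L^4_{t,x}}\ls\|u\|_{X^{0,3/8}}$ with the cutoff estimate gives
\begin{equation*}
\|U(t)\phi\|_{L^4_\tx([0,T]\times\T)}\le\|\eta_T U(t)\phi\|_{L^4}\ls T^{1/8}\|\phi\|_{L^2}.
\end{equation*}
If one prefers to avoid $X^{s,b}$ spaces, an alternative is available but worse. One can bound the $L^4$ norm on $[0,T]$ by Hölder between $L^\infty_tL^2_x$ and \eqref{eq:L4} on the full period, which at first gives no gain. Interpolating instead with the trivial estimate $L^\infty_tL^2_x\to L^2_tL^\infty_x$ fails, since that estimate is false. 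So the $X^{s,b}$ route is the one to use.

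\emph{Step 2 (inhomogeneous estimate \eqref{eq:Stri inhomogeneous}).} By duality the $X^{0,3/8}\to L^4$ bound becomes $\|F\|_{X^{0,-3/8}}\ls\|F\|_{L^{4/3}_{t,x}}$, applied to $F=\II_{[0,T]}u$. Next we use the standard Duhamel estimate
\begin{equation*}
\Big\|\eta_T\int_0^t U(t-\tau)F(\tau)\,d\tau\Big\|_{X^{0,b}}\ls T^{1-b+b'}\|F\|_{X^{0,b'}},\qquad -1/2<b'\le0\le b\le b'+1,
\end{equation*}
with $b\in(1/2,1)$ so that $X^{0,b}\hookrightarrow C_tL^2_x$, and $b'=-3/8$. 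This gives a factor $T^{1-b-3/8}$, which is positive when $b<5/8$. The same $X^{0,b}$ norm controls the $L^4_{t,x}$ norm through $X^{0,b}\hookrightarrow X^{0,3/8}$, and it controls $C([0,T];L^2)$. Taking $\theta$ to be the smaller of $1/8$ and $5/8-b$ finishes the proof.

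\emph{Main obstacle.} The hard step is justifying Bourgain's $L^4$ estimate with exponent $b=3/8<1/2$. I would quote it from \cite{Bou93}. If a self-contained argument is wanted, one proves it by a dyadic decomposition in the modulation $|\tau+n^2|\sim M$ and counting the solutions of $n_1+n_2=n$, $n_1^2+n_2^2=\omega$ within modulation windows; this is the only genuinely nontrivial input. Everything else consists of routine time-localisation and duality steps, with the usual restriction-norm bookkeeping for the sharp truncation $\II_{[0,T]}$.
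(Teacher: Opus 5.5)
Your proof is correct. For \eqref{eq:L4 refine} it is essentially the paper's argument. Both use Bourgain's bound $\|u\|_{L^4_{t,x}}\ls\|u\|_{X^{0,3/8}}$ and extract the power of $T$ from time localisation at a regularity $b<1/2$. You compute $\|\eta_T\|_{H^{3/8}}\ls T^{1/8}$ by hand, while the paper quotes the bounds $\|u\|_{X^{0,3/8}_T}\ls T^{\theta}\|u\|_{X^{0,7/16}_T}$ and $\|U(t)\phi\|_{X^{0,7/16}_T}\ls\|\phi\|_{L^2}$ from \cite{ET16}.

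The routes diverge for \eqref{eq:Stri inhomogeneous}. The paper derives it from \eqref{eq:L4 refine} alone, by duality ($TT^*$) and the Christ--Kiselev lemma. You instead stay inside Bourgain spaces. You use the dual embedding $L^{4/3}_{t,x}\hookrightarrow X^{0,-3/8}$, applied to $\II_{[0,T]}u$; the sharp cutoff costs nothing because it is only taken in $L^{4/3}$. You combine this with the Ginibre--Tsutsumi--Velo Duhamel estimate, which gains $T^{5/8-b}$ for $1/2<b<5/8$.

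The paper's route is more black-box: only the homogeneous estimate is needed, and the argument transfers to any exponent pair covered by Christ--Kiselev. Yours avoids Christ--Kiselev and gets the $C_tL^2_x$ bound, including continuity, in one stroke from $X^{0,b}\hookrightarrow C_tL^2_x$. It also proves a stronger statement: the Duhamel term lies in $X^{0,1/2+}$ with norm $\ls T^{\theta}\|u\|_{L^{4/3}_{t,x}}$. That is the form the paper actually needs in the construction of $U_V$ in the appendix, where the Duhamel term is estimated in $X^{0,b}_I$ with $b>1/2$.

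Some minor points:
\begin{itemize}
\item Bourgain's $L^4$ bound should be stated on $\R_t\times\T_x$ for the non-periodic $X^{0,b}$ space.
\item The range $1\le T\le 2\pi$, where the GTV lemma is not stated, follows from the untruncated estimates since $T^\theta\ge 1$.
\item Your aside about avoiding $X^{s,b}$ spaces announces an alternative but then lists only approaches that fail, so it should be removed or replaced.
\end{itemize}
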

\begin{proof}
	The estimates \eqref{eq:L4 refine} and \eqref{eq:Stri inhomogeneous} are well known, but we briefly outline the proof.
	It suffices to prove \eqref{eq:L4 refine}
	because we can prove \eqref{eq:Stri inhomogeneous} using \eqref{eq:L4 refine} and the standard duality argument with the Christ--Kiselev lemma.
	By \cite[Lemmas 3.10, 3.11, and Theorem 3.25]{ET16}, we have
	\begin{equation}
		\|U(t)\phi\|_{L^4_\tx([0,T]\times\T)}
		\ls \No{U(t)\phi}_{X^{0,3/8}_T} \ls T^{\theta} \|U(t)\phi\|_{X^{0,7/16}_T} \ls T^{\theta}\|\phi\|_{L^2_x(\T)}.
	\end{equation}
	See \cite{ET16} for the definition of $X^{s,b}_T$. We note that \cite[Lemmas 3.10 and 3.11]{ET16} are stated for the KdV equation, however they also hold for the Schr\"{o}dinger equation (see \cite[page 91]{ET16}).
\end{proof}

Now we can proceed with the Strichartz estimate of $U_V$. Throughout the remainder of the paper we take $V$ to be real-valued.
\begin{lemma}\label{lem:Stri V}
	There is a monotone increasing function $\Ph:[0,\I) \to [0,\I)$ and $\theta > 0$ such that the following is true. 
	For any $0\le T \le 2\pi$ and $V \in L^2_\tx([0,T]\times\T)$, we have
	\begin{align}
		\|U_V(t)\phi\|_{L^4_\tx([0,T]\times\T)} &\ls \Ph(T^{\theta}\|V\|_{L^2_\tx([0,T]\times\T)}) T^{\theta}\|\phi\|_{L^2_x(\T)}, \label{eq:Stri V}\\
		\label{eq:Stri V dual}
		\No{\int_0^t d\ta \, U_V(t,\ta)u(\ta)}_{C_t([0,T];L^2(\T)) \cap L^4_\tx([0,T]\times \T)} &\ls \Ph(T^{\theta}\|V\|_{L^2_\tx([0,T]\times\T)}) T^{\theta} \|u\|_{L^{4/3}_\tx([0,T]\times\T)}. 	
	\end{align}
\end{lemma}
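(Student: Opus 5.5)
We will prove Lemma \ref{lem:Stri V} by treating $V$ as a perturbation of the free flow via Duhamel's formula, first on short time intervals where $\|V\|_{L^2_\tx}$ is small, and then gluing these intervals together. The resulting gluing constant is the function $\Ph$. On an interval $I=[a,b]\subset[0,T]$, the solution $u(t)=U_V(t,a)\phi$ satisfies
\begin{equation*}
u(t) = U(t-a)\phi - i\int_a^t d\ta\, U(t-\ta)\big(V(\ta)u(\ta)\big).
\end{equation*}
By H\"older's inequality, $\|Vu\|_{L^{4/3}_\tx(I\times\T)} \le \|V\|_{L^2_\tx(I\times\T)}\|u\|_{L^4_\tx(I\times\T)}$. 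Combining this with \eqref{eq:L4 refine} and \eqref{eq:Stri inhomogeneous}, which are translation invariant in time and so hold on $I$ with constant $|I|^\theta\le T^\theta$, we obtain
\begin{equation*}
\|u\|_{C_t(I;L^2_x)\cap L^4_\tx(I\times\T)} \le C T^\theta\|\phi\|_{L^2_x} + C T^{\theta}\|V\|_{L^2_\tx(I\times\T)}\|u\|_{L^4_\tx(I\times\T)}.
\end{equation*}
If $CT^\theta\|V\|_{L^2_\tx(I\times\T)}\le 1/2$, the last term can be absorbed into the left-hand side. This gives $\|u\|_{C_tL^2\cap L^4(I)}\le 2CT^\theta\|\phi\|_{L^2}$. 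The absorption requires $\|u\|_{L^4(I)}<\I$ a priori. We get this either from the construction in the appendix or by running the argument for smooth truncations $V_k\to V$ and passing to the limit.

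Next we partition $[0,T]$ into consecutive intervals $I_1,\dots,I_J$ with $CT^\theta\|V\|_{L^2(I_j\times\T)}\le 1/2$. Since $t\mapsto\|V\|_{L^2([0,t]\times\T)}^2$ is continuous and additive, we can choose $J\le 1+8C^2T^{2\theta}\|V\|_{L^2_\tx([0,T]\times\T)}^2$. By unitarity of $U_V$ on $L^2$ (here we use that $V$ is real), the data at the start of each $I_j$ has norm $\|\phi\|_{L^2}$, so each piece contributes at most $2CT^\theta\|\phi\|_{L^2}$ to the $L^4$ norm. Summing over pieces yields \eqref{eq:Stri V} with $\Ph(x)\sim (1+x^2)^{1/4}$ or $1+x^2$; either choice is monotone. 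If unitarity is not yet available, we instead propagate the $C_tL^2$ bound from one piece to the next. That gives the cruder $\Ph(x)=\exp(Cx^2)$, which is still acceptable.

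For \eqref{eq:Stri V dual}, we write $w(t)=\int_0^t U_V(t,\ta)u(\ta)d\ta$. This is the solution of $i\pl_tw+\De w-Vw=iu$ with $w(0)=0$, so
\begin{equation*}
w(t)=\int_0^tU(t-\ta)\big(u(\ta)-iV(\ta)w(\ta)\big)d\ta,
\end{equation*}
up to constants. On each small interval $I_j$ we have the same smallness condition, and the datum at the left endpoint is $w(a_j)$. Applying \eqref{eq:L4 refine} to that datum and \eqref{eq:Stri inhomogeneous} to the forcing gives
\begin{equation*}
\|w\|_{C_tL^2\cap L^4(I_j)}\le 2C\big(T^\theta\|w(a_j)\|_{L^2}+T^\theta\|u\|_{L^{4/3}(I_j)}\big).
\end{equation*}
Iterating over $j$ bounds $\|w(a_j)\|_{L^2}$ recursively. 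The resulting geometric growth in $J$ is controlled by an increasing function of $T^\theta\|V\|_{L^2}$, since $J$ is. Alternatively, we can use $w(t)=\sum_j\int_{I_j\cap[0,t]}U_V(t,a_j)U_V(a_j,\ta)u\,d\ta$ together with unitarity to avoid the exponential loss.

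The main obstacle is making the short-time argument rigorous for merely $L^2_\tx$ potentials. Specifically, we must justify the Duhamel representation and the a priori finiteness of $\|u\|_{L^4}$ needed for absorption. We will handle this by approximating $V$ by smooth $V_k$, proving the estimates uniformly in $k$, and then using the stability of $U_{V_k}\to U_V$ from the appendix construction to pass to the limit.
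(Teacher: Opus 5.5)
Your proof is correct. For \eqref{eq:Stri V} it is the paper's argument: free Duhamel expansion, Lemma \ref{lem:Stri inhomogeneous} with H\"older, absorption when $T^\theta\|V\|_{L^2_{t,x}}$ is small, and a partition of $[0,T]$ into $J\lesssim 1+T^{2\theta}\|V\|_{L^2_{t,x}}^2$ intervals glued together by unitarity of $U_V$. Adding fourth powers of the $L^4$ norms gives $J^{1/4}$ rather than the paper's $J$, a harmless improvement. You also rightly flag the a priori finiteness of $\|u\|_{L^4}$ needed to absorb, which the paper leaves to the appendix construction.

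The real difference is in \eqref{eq:Stri V dual}.
\begin{itemize}
\item The paper deduces it from \eqref{eq:Stri V} by the standard duality argument. With $K\phi:=U_V(\cdot)\phi$ and $U_V(t,\tau)=U_V(t)U_V(\tau)^*$, the untruncated operator $u\mapsto\int_0^T U_V(t,\tau)u(\tau)\,d\tau$ is $KK^*$. It is therefore bounded $L^{4/3}_{t,x}\to L^4_{t,x}$ with constant $\Phi^2T^{2\theta}$. Christ--Kiselev (available since $4/3<4$) handles the restriction $\tau<t$, and the $C_tL^2$ bound is the bound for $K^*$ applied to $u$ cut off at time $t$, plus unitarity.
\item You instead rerun the perturbation on the forced equation. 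You either iterate $w=\int_0^tU(t-\tau)(u-iVw)\,d\tau$ across the intervals, or split $U_V(t,\tau)=U_V(t,a_j)U_V(a_j,\tau)$ and use unitarity.
\end{itemize}
The paper's route is shorter and abstract: it uses nothing about $U_V$ beyond \eqref{eq:Stri V} and the group property. Yours stays at the level of the equation and avoids Christ--Kiselev for the perturbed flow. The cost is justifying the Duhamel identity for $w$ (it follows from the Duhamel formula for $U_V(t,\tau)$ and Fubini) and the same a priori finiteness for absorption. The iterative version yields an exponential $\Phi$ and the unitarity version a polynomial one; either is acceptable for the lemma.

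One small slip: in your short-interval displays you put $T^\theta$ in front of $\|\phi\|_{L^2}$ (respectively $\|w(a_j)\|_{L^2}$) also in the $C_tL^2$ component. The free flow is unitary, so the homogeneous term contributes exactly $\|\phi\|_{L^2}$ there. Nothing breaks:
\begin{itemize}
\item \eqref{eq:Stri V} only uses the $L^4$ component.
\item In the recursion for $w$, the growth factor per interval becomes $1+CT^\theta$ rather than $2CT^\theta$, still bounded by an absolute constant since $T\le 2\pi$.
\item The overall factor $T^\theta$ in \eqref{eq:Stri V dual} comes from $w(0)=0$ and the forcing.
\end{itemize}
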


\begin{remark}
	In this paper, we always denote a monotone increasing function from $[0,\I)$ to $[0,\I)$ by $\Ph$.  Like a constant $C$, we use $\Ph$ to represent a function that may change line-to-line. The reader can also take $\theta = 1/16$ if they prefer a numerical value.
\end{remark}

\begin{proof}
	It suffices to prove \eqref{eq:Stri V} because we can obtain \eqref{eq:Stri V dual} from \eqref{eq:Stri V} by the standard duality argument.
	Writing $u(t):=U_V(t)\phi$, we have
	\begin{equation}
		u(t)=U(t)\phi - i\int_0^t d\ta \,  U(t-\ta)V(\ta)u(\ta).
	\end{equation}
	Therefore Lemma \ref{lem:Stri inhomogeneous} implies
	\begin{align}
		\|u\|_{L^4_\tx([0,T]\times\T)}
		&\le CT^{\theta}\|\phi\|_{L^2_x(\T)} + CT^{\theta}\|V(t)u(t)\|_{L^{4/3}_\tx([0,T]\times\T)} \\
		&\le CT^{\theta}\|\phi\|_{L^2_x(\T)} + CT^{\theta}\|V\|_{L^2_\tx([0,T]\times\T)}\|u\|_{L^4_\tx([0,T]\times\T)}.
	\end{align}
	If $T^{\theta}\|V\|_{L^2_\tx([0,T]\times\T)} \le \de:=1/(2C)$, then we have
	\begin{equation}
		\|u\|_{L^4_\tx([0,T]\times\T)} \le 2CT^{\theta}\|\phi\|_{L^2_x(\T)}.
	\end{equation}
	When $T^{\theta}\|V\|_{L^2_\tx([0,T]\times\T)}$ is not sufficiently small, we  decompose $[0,T]$ as
	\begin{equation}
		0=T_0 < T_1 < \cdots < T_N <T_{N+1}=T
	\end{equation}
	so that $\de/2\le T^{\theta}\|V\|_{L^2([T_k,T_{k+1}])} \le \de$ for all $k=0,\dots,N-1$ and $T^{\theta}\|V\|_{L^2([T_N,T_{N+1}])} \le \de$.
	Writing $V_k(t):=V(t)\II_{[T_k,T_{k+1}]}(t)$, we have $U_V(t,T_k)=U_{V_k}(t,T_k)$ for all $t \in [T_k,T_{k+1}]$. So for all $t\in[T_k,T_{k+1}]$, we have
	\begin{equation}
		U_V(t)=U_V(t,T_k)U_V(T_k)=U_{V_k}(t,T_k)U_V(T_k) = U_{V_k}(t)U_{V_k}^*(T_k)U_V(T_k).
	\end{equation}
	Therefore
	\begin{align}
		\|U_V(t)\phi\|_{L^4_\tx([0,T]\times\T)} &\le \sum_{k=0}^N \|U_V(t)\phi\|_{L^4_\tx([T_k,T_{k+1}]\times \T)} \\
		&=\sum_{k=0}^N \|U_{V_k}(t)U_{V_k}^*(T_k)U_V(T_k)\phi\|_{L^4_\tx([T_k,T_{k+1}]\times \T)} \\
		&\le 2CN T^{\theta} \|\phi\|_{L^2_x(\T)}.
	\end{align}
	The final inequality uses that $T^{\theta}\|V\|_{L^2([T_k,T_{k+1}])} \le \de$. Finally, since $N \ls \de^{-2}(1+T^{2\theta} \|V\|_{L^2_{t,x}([0,T]\times\T)}^2 )$, we have
	\begin{align}
		\|U_V(t)\phi\|_{L^4_\tx([0,T]\times\T)} &\ls (1+T^{2\theta}\|V\|_{L^2_\tx([0,T]\times\T)}^2) T^{\theta} \|\phi\|_{L^2_x(\T)}.
	\end{align}
\end{proof}
Using Lemma \ref{lem:Stri V} and the triangle inequality, we get the following corollary.
\begin{corollary}\label{cor:Stri V}
	There is a monotone increasing function $\Ph:[0,\I) \to [0,\I)$ such that the following holds.
	For any $0\le T \le 2\pi$ and $V\in L^2_\tx([0,T]\times\T)$, we have
	\begin{align}
		&\|\rh(U_V(t)\ga_0 U_V^*(t))\|_{L^2_\tx([0,T]\times\T)} \le \Ph(T^{\theta}\|V\|_{L^2_\tx([0,T]\times\T)}) T^{2\theta}\|\ga_0\|_{\FS^1}.
	\end{align}
\end{corollary}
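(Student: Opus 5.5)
The corollary follows from Lemma \ref{lem:Stri V} by writing the density as a sum over an orthonormal decomposition of $\ga_0$ and applying the triangle inequality in $L^2_\tx$. Since $A\mapsto\rh_A$ is linear on $\FS^1$, it suffices to treat a trace-class $\ga_0$ through its singular value decomposition
\[
\ga_0=\sum_{n} a_n|\phi_n\rg\lg\psi_n|, \qquad \|\ga_0\|_{\FS^1}=\sum_n a_n .
\]
Because $U_V(t)$ is unitary on $L^2(\T)$ (a consequence of $V$ being real-valued, see Appendix \ref{sec:appendix}), we have
\[
U_V(t)\ga_0U_V^*(t)=\sum_n a_n|U_V(t)\phi_n\rg\lg U_V(t)\psi_n|.
\]
This is again a decomposition into rank-one pieces with orthonormal vectors at each fixed time, so by Definition \ref{density_function_definition} and linearity,
\[
\rh(U_V(t)\ga_0U_V^*(t))(x)=\sum_n a_n\,U_V(t)\phi_n(x)\,\ov{U_V(t)\psi_n(x)} .
\]

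Next, apply the triangle inequality in $L^2_\tx([0,T]\times\T)$, and then Cauchy--Schwarz (H\"older with exponents $4,4$) to each term:
\[
\|U_V\phi_n\,\ov{U_V\psi_n}\|_{L^2_\tx}\le \|U_V\phi_n\|_{L^4_\tx}\|U_V\psi_n\|_{L^4_\tx}.
\]
By \eqref{eq:Stri V}, each factor is bounded by $C\,\Ph(T^\theta\|V\|_{L^2_\tx})\,T^\theta$, since $\|\phi_n\|_{L^2}=\|\psi_n\|_{L^2}=1$. Summing over $n$ gives
\[
\|\rh(U_V(t)\ga_0U_V^*(t))\|_{L^2_\tx}\le C\,\Ph(T^\theta\|V\|_{L^2_\tx})^2\,T^{2\theta}\sum_n a_n .
\]
Renaming $C\Ph^2$ as the new monotone function $\Ph$ yields the claim.

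The only point needing care is the convergence and justification of the series. The partial sums converge in $L^2_\tx$ absolutely, because $\sum_n a_n<\I$ and each term is uniformly bounded as above. The identification of the limit with $\rh(U_V(t)\ga_0U_V^*(t))$ at almost every $t$ follows from the $L^1_x$ convergence of the defining series in Definition \ref{density_function_definition}, which holds since $U_V(t)\ga_0U_V^*(t)$ is trace class. With these two facts in place, the estimate is the routine computation above; I expect no real obstacle.
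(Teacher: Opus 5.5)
Your proposal is correct and is essentially the paper's argument. The paper obtains the corollary directly from Lemma \ref{lem:Stri V} and the triangle inequality, which is exactly your route: the singular value decomposition of $\ga_0$, H\"older with $L^4_\tx\times L^4_\tx$ on each term $U_V(t)\phi_n\,\overline{U_V(t)\psi_n}$, and \eqref{eq:Stri V} applied to both factors, giving the $T^{2\theta}$. Your remarks on the convergence of the series and the identification of its limit with $\rh(U_V(t)\ga_0U_V^*(t))$ make explicit what the paper leaves implicit.
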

We now prove an analogous result for the renormalised density. We obtain the following improved version of Corollary \ref{cor:Stri V}.
\begin{lemma}\label{lem:averaged Strichartz V}
	There is a monotone increasing function $\Ph:[0,\I) \to [0,\I)$ such that for any $0 \le T \le 2\pi$, $\ga_0 \in \FS^2$, and $V \in L^2_\tx([0,T]\times\T)$, 
	we have
	\begin{equation}
		\No{\vrh(U_V(t)\ga_0 U_V^*(t))}_{L^2_\tx([0,T]\times\T)} \ls \Ph(T^{\theta}\|V\|_{L^2_\tx([0,T]\times\T)}) \|\ga_0\|_{\FS^2}.
	\end{equation}
\end{lemma}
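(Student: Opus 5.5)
The plan is to reduce the estimate, by duality, to a Hilbert--Schmidt bound on a dual operator built from $U_V$, and then to treat $U_V$ as a perturbation of $U$ through Duhamel's formula. Write $\ga(t)=U_V(t)\ga_0U_V^*(t)$ and $\varrho:=\vrh(\ga(t))$. Since $U_V$ is unitary, $\Tr\ga(t)=\Tr\ga_0$ for $\ga_0\in\FS^1$, so $\lg\varrho(t)\rg_x=0$ for each $t$. Exactly as in the proof of Theorem \ref{thm:L2 bound renormalised}, Lemma \ref{duality_zero_mean_lemma}, Lemma \ref{duality_lemma}, cyclicity and H\"older then give
\begin{equation*}
\|\varrho\|_{L^2_\tx([0,T]\times\T)}\le \|\ga_0\|_{\FS^2}\sup_{W}\No{Q_V(W)}_{\FS^2},\qquad Q_V(W):=\int_0^T dt\, U_V^*(t)W(t)U_V(t),
\end{equation*}
with the supremum over real $W$ with $\|W\|_{L^2_\tx}\le1$ and $\lg W\rg=0$. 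I would argue first for $\ga_0\in\FS^1$ and extend by density as in Remark \ref{remark_non_trace}. So the goal becomes
\begin{equation*}
\|Q_V(W)\|_{\FS^2}\le\Ph(T^\theta\|V\|_{L^2_\tx})\,\|W\|_{L^2_\tx}.
\end{equation*}

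Next, write $U_V(t)=U(t)+D(t)$ with $D(t):=-i\int_0^t d\ta\,U(t-\ta)V(\ta)U_V(\ta)$, and expand
\begin{equation*}
Q_V(W)=\int U^*WU+\int D^*WU+\int U^*WD+\int D^*WD .
\end{equation*}
For the first term, extend $W$ by zero to $\T_t$; it stays mean zero, so Lemma \ref{lem:L2 dual estimate} bounds it by $\|W\|_{L^2_\tx}$. By Lemma \ref{lem:Stri V}, the Duhamel part satisfies $\|D\phi\|_{L^4_\tx}\le\Ph\cdot T^\theta\|V\|_{L^2_\tx}\|\phi\|_{L^2}$. The idea for the remaining three terms is that they carry at least one factor of $V$, and so can be controlled without using $\lg W\rg=0$. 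As a sanity check, $W\equiv c$ gives $\int D^*cU+\dots=c\int(U_V^*U_V-U^*U)=0$, so constants cause no trouble in these pieces.

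The main obstacle is upgrading these correction terms from operator-norm bounds to Hilbert--Schmidt bounds. Factoring $\int U^*WD=\mathcal U^*\circ M_W\circ\mathcal D$, with $\mathcal U,\mathcal D:L^2_x\to L^4_\tx$, only gives boundedness on $L^2$. I would try to gain Schatten summability as follows. First, unfold $\mathcal D=-i\,\mathcal I\circ M_V\circ\mathcal U_V$, where $\mathcal I$ is the inhomogeneous Duhamel operator of Lemma \ref{lem:Stri inhomogeneous}, and write $M_W=M_{|W|^{1/2}}M_{\operatorname{sgn}W\,|W|^{1/2}}$. Then apply H\"older in Schatten classes (Lemma \ref{Holder_inequality}) with $\FS^2=\FS^4\cdot\FS^4$. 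Each factor of the form $M_{|W|^{1/2}}\,\mathcal U$, with $|W|^{1/2}\in L^4_\tx$, should be controlled in $\FS^4$ through the identity $\|M_f\mathcal U\|_{\FS^4}^2=\|\mathcal U^*|f|^2\mathcal U\|_{\FS^2}$. That is a dual (orthonormal-Strichartz) quantity of Lemma \ref{lem:L2 dual estimate} type, but the weight $|f|^2$ is not mean zero, so its mean has to be split off. After splitting it off, the correction terms are exactly where the commutator structure must absorb that mean. If this route fails, I would fall back on a small-time argument. On short intervals where $T^\theta\|V\|_{L^2}\le\de$, I would run a Neumann series for $Q_V(W)$ in $\FS^2$, using that every term beyond the first carries a factor $T^\theta\|V\|$. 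I would then glue the intervals as in the proof of Lemma \ref{lem:Stri V}, via $U_V(t)=U_{V_k}(t,T_k)U_V(T_k)$ and unitary invariance of $\|\cdot\|_{\FS^2}$. This yields the monotone function $\Ph$ with $\Ph\ls1+T^{2\theta}\|V\|^2_{L^2_\tx}$ type growth.
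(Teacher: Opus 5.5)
\textbf{There is a genuine gap in the correction terms.} Your duality reduction to $\|Q_V(W)\|_{\FS^2}$ agrees with the paper, which records this dual form afterwards as Corollary \ref{cor:Stri V renormalised}. So do the split $U_V=U+D$ and the treatment of $\int U^*WU$ by Lemma \ref{lem:L2 dual estimate}. The problem is your premise that the three correction terms carry a factor of $V$ and therefore need no mean-zero condition on $W$. This is false. Take $V\equiv c\neq 0$. Then $D(t)=(e^{-ict}-1)U(t)$, so
\begin{equation*}
\int_0^T U^*(t)W(t)D(t)\,dt=\int_0^T (e^{-ict}-1)\,U^*(t)W(t)U(t)\,dt .
\end{equation*}
Every diagonal entry of this operator in the basis $(e_n)$ equals $\int_0^T(e^{-ict}-1)\langle W(t)\rangle_x\,dt$. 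That quantity is nonzero for $W\equiv 1$, and generically also for a space-time mean-zero $W=w(t)$ with $\int_0^T w=0$. Hence the term is not even compact; only the sum of the three corrections cancels.

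The factors of $V$ can only give operator-norm (Strichartz) bounds, so the $\FS^2$ summability must come from $W$. This also rules out your $\FS^4\cdot\FS^4$ route. The operator $(M_f\mathcal U)^*M_f\mathcal U=\int_0^T U^*|f|^2U\,dt$ has every diagonal entry equal to $\frac{1}{2\pi}\|f\|_{L^2_{t,x}}^2$, so $M_f\mathcal U$ is not compact for $f\neq 0$. ``Splitting off the mean'' is therefore the whole difficulty, not a detail. The Neumann-series fallback fails the same way, since its individual terms are not in $\FS^2$. The gluing step fails too, because the restriction of $W$ to $[T_k,T_{k+1}]$ is no longer mean zero.

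\textbf{The missing idea is to take the Hilbert--Schmidt norm from $W$ and only boundedness from $V$}, in the form $\FS^2\cdot\CB$. Write $V=V_0V_1$ with $V_0=|V|^{1/2}$. Then
\begin{equation*}
\int_0^T U^*WD\,dt=-i\int_0^T dt\,U^*(t)W(t)U(t)\int_0^t d\tau\,U^*(\tau)V_0(\tau)\,V_1(\tau)U_V(\tau),
\end{equation*}
where $V_1U_V:L^2_x\to L^2_{\tau,x}$ is bounded by Lemma \ref{lem:Stri V}. Without the constraint $\tau<t$, the remaining operator factors as $\bigl(\int_0^T U^*WU\,dt\bigr)\circ\bigl(\int_0^T U^*(\tau)V_0(\tau)\,d\tau\bigr)$. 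The first factor is Hilbert--Schmidt by Lemma \ref{lem:L2 dual estimate}, and the second is bounded by the dual of Lemma \ref{lem:Stri inhomogeneous}. The paper then removes the time ordering with the Schatten-class Christ--Kiselev lemma \cite[Corollary 4.9]{BHS25}, and treats $D^*WD$ similarly using \cite[Lemma 4.1]{BHS25}.

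\textbf{One caution if you implement this.} Truncating in time destroys space-time mean zero. For $W=w(t)$ with $\int_0^T w=0$, the truncated operator is $\int_0^T F(\tau)U^*(\tau)V_0(\tau)\,d\tau$ with $F(\tau)=\int_\tau^T w$, and this is not Hilbert--Schmidt. You should therefore use the stronger fact you noted at the start, namely $\langle\varrho(t)\rangle_x=0$ for every $t$, to restrict the dual class to $W$ with $\langle W(t)\rangle_x=0$ for a.e.\ $t$. In operator form this is justified because $\int_0^T U_V^*(t)c(t)U_V(t)\,dt=(\int_0^T c)\,\mathrm{Id}$. So $Q_V(W)$ is unchanged when $W$ is replaced by $W-\langle W(t)\rangle_x$, which has no larger $L^2$ norm. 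With this restriction, every time truncation of $W$ is mean zero and Lemma \ref{lem:L2 dual estimate} applies piece by piece. The paper takes the supremum only over $\langle f\rangle=0$, and its estimate \eqref{eq:partially orthogonal} needs this same restriction to be valid.
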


\begin{proof}
Since $U_V(t)$ satisfies
	\begin{equation}
		U_V(t) = U(t) - i\int_0^t d\ta \, U(t-\ta)V(\ta)U_V(\ta) =: U(t) + D_V(t),
	\end{equation}
	we have
	\begin{multline}
    \label{decomposition1}
		\vrh(U_V(t)\ga_0 U_V^*(t))= \vrh(U(t)\ga_0 U^*(t)) + \rho(D_V(t)\ga_0 U^*(t)) \\
		\quad  + \rho(U(t)\ga_0 D_V^*(t)) + \rho(D_V(t)\ga_0 D_V^*(t))
		=: \varrho_0 + \varrho_1 + \varrho^1 + \varrho_2.
	\end{multline}
	Here we note that \eqref{decomposition1} contains only one renormalisation since both the left and right hand side of \eqref{decomposition1} require us to subtract $c\mathrm{Tr}(\gamma_0)$ once. For $\varrho_0$, Theorem \ref{thm:L2 bound renormalised} implies
	\begin{equation}
		\|\vrh(U(t)\ga_0 U^*(t))\|_{L^2_\tx([0,T]\times\T)} \ls \|\ga_0\|_{\FS^2}.
	\end{equation}
	For the other terms, we can rearrange \eqref{decomposition1} to write
	\begin{equation}
		\varrho_1 + \varrho^1 + \varrho_2 = \vrh(U_V(t)\ga_0 U_V^*(t)) - \vrh(U(t)\ga_0 U^*(t)),
	\end{equation}
	which implies $\lg \varrho_1 + \varrho^1 + \varrho_2 \rg = 0$. Using Lemma \ref{duality_zero_mean_lemma}, we have
\begin{multline*}
	  \|\varrho_1 + \varrho^1 + \varrho_2\|_{L^2_\tx([0,T]\times\T)} \\
      = \sup \Ck{ \int_0^T\int_\T dx \, dt \, f(t,x) (\varrho_1 + \varrho^1 + \varrho_2)(t,x) }  \le \sup \Ck{ \int_0^T\int_\T dx \, dt \, f(t,x) \varrho_1(t,x)} \\     
 \quad + \sup \Ck{ \int_0^T\int_\T dx \, dt \,  f(t,x) \varrho^1(t,x)}
 + \sup \Ck{ \int_0^T\int_\T dx \, dt \, f(t,x) \varrho_2(t,x) }
        =:\SA_1 + \SA^1 + \SB,
\end{multline*}
	where the suprema are taken over
    \begin{equation*}
        \{f \in L^2_\tx([0,T]\times\T): \|f\|_{L^2_\tx([0,T]\times\T)}\le 1 \text{ and } \lg f \rg = 0\}.
    \end{equation*}
    We first bound $\SA_1$ and $\SA^1$. Since $\SA^1$ can be dealt with in the same way, we only include the details for $\SA_1$. Let $f \in L^2_\tx([0,T]\times\T)$ be such that $\lg f \rg = 0$ and define $V_0 := |V|^{1/2}$ and $V = V_0 V_1$. Applying H\"older's inequality and Lemma \ref{lem:Stri V}, we have
	\begin{multline}
		\abs{\int_0^T \int_\T dx \, dt \,  f(t,x)\varrho_1(t,x)} = \abs{\Tr\Dk{\int_0^T dt \, U^*(t) f(t) U(t) \int_0^t d\ta \, U^*(\ta) V(\ta) U_V(\ta) \ga_0}} \\
		\le \No{\int_0^T dt \, U^*(t) f(t) U(t) \int_0^t d\ta \, U^*(\ta) V(\ta) U_V(\ta)}_{\FS^2} \|\ga_0\|_{\FS^2} \\
		 \le \Ph(T^{\theta}\|V\|_{L^2_\tx([0,T]\times\T)}) T^{\theta}\|V\|_{L^2_\tx([0,T]\times\T)}^{1/2} \\
		\times \No{\int_0^T dt \, U^*(t) f(t) U(t) \int_0^t d\ta \, U^*(\ta) V_0(\ta)}_{\FS^2(L^2_\tx \to L^2_x)} \|\ga_0\|_{\FS^2}.
	\end{multline}
	We note that Lemmas \ref{lem:L2 dual estimate} and \ref{lem:Stri inhomogeneous} imply that
	\begin{equation}
		\No{\int_0^T dt \, U^*(t) f(t) U(t) \int_0^T d\ta \,  U^*(\ta) V_0(\ta)}_{\FS^2(L^2_\tx \to L^2_x)}
		\ls T^{\theta} \|f\|_{L^2_\tx ([0,T]\times\T)} \|V_0\|_{L^4_\tx([0,T]\times\T)}.
	\end{equation}
	Hence it follows from \cite[Corollary 4.9]{BHS25} that
	\begin{equation}\label{eq:partially orthogonal}
		\No{\int_0^T dt \, U^*(t) f(t) U(t) \int_0^t d\ta \, U^*(\ta) V_0(\ta)}_{\FS^2(L^2_\tx \to L^2_x)}
		\ls T^{\theta}\|f\|_{L^2_\tx ([0,T]\times\T)} \|V_0\|_{L^4_\tx([0,T]\times\T)},
	\end{equation}
	where we note that the second integral is now taken over the interval $[0,t]$. Collecting the above estimates and recalling that $\|V_0\|_{L^4_\tx([0,T]\times\T)} = \|V\|_{L^2_\tx([0,T]\times\T)}^{1/2}$, we conclude that
	\begin{equation}
		\|\varrho_1\|_{L^2_\tx([0,T]\times\T)} \le \Ph(T^{\theta} \|V\|_{L^2_\tx([0,T]\times\T)}) \|\ga_0\|_{\FS^2}.
	\end{equation}
	Here we note that we have absorbed the  $T^\th \|V\|_{L^2_\tx([0,T]\times\T)}$ into the function $\Phi$.
    
	We now estimate $\SB$. Let $f \in L^2_\tx([0,T]\times\T)$ such that $\lg f \rg = 0$. Moreover, define $V_0 := |V|^{1/2}$ and $V = V_0 V_1$. By H\"older's inequality and Lemma \ref{lem:Stri V}, we have
	\begin{multline}
		\abs{\int_0^T \int_\T dx \, dt \, f(t,x)\varrho_2(t,x)}\\
		= \abs{\Tr\Dk{\int_0^T d\ta' \, U_V^*(\ta')V(\ta')U(\ta')\int_{\ta'}^T dt \, U^*(t) f(t) U(t) \int_0^t d\ta \, U^*(\ta) V(\ta) U_V(\ta) \ga_0}} \\
		\le \Ph(T^{\theta} \|V\|_{L^2_\tx([0,T]\times\T)}) T^{2\theta} \|V\|_{L^2_\tx([0,T]\times\T)}\|\ga_0\|_{\FS^2}\\
		\times \No{V_1(\ta')U(\ta') \int_{\ta'}^T dt \, U^*(t) f(t) U(t) \int_0^t d\ta \, U^*(\ta)V_0(\ta)}_{\FS^2(L^2_{\ta,x} \to L^2_{\ta',x})}. 
	\end{multline}
	By \eqref{eq:partially orthogonal}, we have
	\begin{multline}
	\label{bound_1}	\No{V_1(\ta')U(\ta') \int_0^T dt \, U^*(t) f(t) U(t) \int_0^t d\ta \, U^*(\ta)V_0(\ta)}_{\FS^2(L^2_{\ta,x} \to L^2_{\ta',x})} \\
		\le \|V_1(t)U(t)\|_{L^2_x \to L^2_\tx} \No{ \int_0^T dt \, U^*(t) f(t) U(t) \int_0^t d\ta \, U^*(\ta)V_0(\ta)}_{\FS^2(L^2_{\ta',x} \to L^2_x)} \\
		\ls T^{2\theta}\|V\|_{L^2_\tx([0,T]\times\T)} \|f\|_{L^2_\tx([0,T]\times\T)}.
	\end{multline}
	By \eqref{eq:partially orthogonal} and the identity $\|A\|_{\FS^\al(H \to K)} = \|A^*\|_{\FS^\al(K \to H)}$, we have
	\begin{multline}
		\No{V_1(\ta')U(\ta') \int_{\ta'}^T dt \, U^*(t) f(t) U(t) \int_0^T d\ta \, U^*(\ta)V_0(\ta)}_{\FS^2(L^2_{\ta,x} \to L^2_{\ta',x})} \\
		 \le \No{V_1(\ta')U(\ta') \int_{\ta'}^T dt \, U^*(t) f(t) U(t)}_{\FS^2(L^2_x \to L^2_{\ta',x})} \No{\int_0^T d\ta \, U^*(\ta)V_0(\ta)}_{\FS^2(L^2_{\ta,x} \to L^2_{x})}  \\
		 \ls T^{\theta}\|V\|_{L^2_\tx([0,T]\times\T)}^{1/2}
		     \No{\int_0^T dt \, U^*(t) f(t)U(t) \int_0^t d\tau' \, U(\ta')V_1(\ta')}_{\FS^2(L^2_{\ta',x} \to L^2_x)} \\
		 \ls T^{2\theta}\|V\|_{L^2_\tx([0,T]\times\T)} \|f\|_{L^2_\tx([0,T]\times\T)},
	\end{multline}
    where we have used Lemma \ref{lem:Stri inhomogeneous}. Writing
	\begin{equation}
		\int_0^T d\ta \int_\ta^{\ta'}dt = \int_0^T dt \int_0^t d\ta - \int_{\ta'}^T dt \int_0^T d\ta,
	\end{equation}
	we have 
	\begin{multline}
	   \No{V_1(\ta')U(\ta')\int_0^T d\ta \K{\int_{\ta}^{\ta'} U^*(t)f(t)U(t)dt} U(\ta)V_0(\ta)}_{\FS^2(L^2_{\ta,x} \to L^2_{\ta',x})} \\
		\ls T^{2\theta}\|V\|_{L^2_\tx([0,T]\times\T)} \|f\|_{L^2_\tx([0,T]\times\T)}.
	\end{multline}
	Using \cite[Lemma 4.1]{BHS25}, we conclude
	\begin{multline}
		\No{V_1(\ta')U(\ta')\int_0^{\ta'}dt \, U^*(t)f(t)U(t)\int_0^t d\ta \, U(\ta)V_0(\ta)}_{\FS^2(L^2_{\ta,x} \to L^2_{\ta',x})} \\
		=\No{V_1(\ta')U(\ta')\int_0^{\ta'}d\ta \K{\int_{\ta}^{\ta'} U^*(t)f(t)U(t)dt} U(\ta)V_0(\ta)}_{\FS^2(L^2_{\ta,x} \to L^2_{\ta',x})} \\
		 \ls T^{2\theta}\|V\|_{L^2_\tx([0,T]\times\T)} \|f\|_{L^2_\tx([0,T]\times\T)}.
	\end{multline}
    Recalling \eqref{bound_1}, it follows that

	\begin{multline}
		\No{V_1(\ta')U(\ta')\int_{\ta'}^T dt \, U^*(t)f(t)U(t)\int_0^t d\ta \, U(\ta)V_0(\ta)}_{\FS^2(L^2_{\ta,x} \to L^2_{\ta',x})} \\
		 \ls T^{2\theta}\|V\|_{L^2_\tx([0,T]\times\T)} \|f\|_{L^2_\tx([0,T]\times\T)}.
	\end{multline}
	Therefore
	\begin{equation}
		\abs{\int_0^T \int_\T dx\, dt \,f(t,x)\varrho_2(t,x)}
		\le \Ph(T^{\theta} \|V\|_{L^2_\tx([0,T]\times\T)}) \|\ga_0\|_{\FS^2},
	\end{equation}
	and duality yields
	\begin{equation}
		\|\varrho_2\|_{L^2_\tx([0,T]\times\T)} \le \Ph(T^{\theta} \|V\|_{L^2_\tx([0,T]\times\T)}) \|\ga_0\|_{\FS^2}.
	\end{equation}
\end{proof}

We have the following corollary of Lemma \ref{lem:averaged Strichartz V}. 
\begin{corollary}\label{cor:Stri V renormalised}
	Let $T \in [0,2\pi]$.
	For any $V \in L^2_\tx([0,T]\times\T)$ and $f \in L^2_\tx([0,T]\times\T)$ such that $\lg f \rg = 0$, we have
	\begin{equation}
		\No{\int_0^T dt \, U_V^*(t) f(t) U_V(t) }_{\FS^2}
		\ls \Ph(T^{\theta}\|V\|_{L^2_\tx([0,T]\times\T)}) \|f\|_{L^2_\tx([0,T]\times\T)},
	\end{equation}
	where $\Ph:[0,\I) \to [0,\I)$ is a monotone increasing function.
\end{corollary}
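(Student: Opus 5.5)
The corollary is the dual form of Lemma \ref{lem:averaged Strichartz V}, and I will derive it by duality in Schatten classes. Since $\FS^2$ is self-dual, I write
\begin{equation*}
\No{\int_0^T dt \, U_V^*(t) f(t) U_V(t)}_{\FS^2} = \sup_{\|\ga_0\|_{\FS^2}\le 1} \abs{\Tr\Dk{\int_0^T dt \, U_V^*(t) f(t) U_V(t)\,\ga_0}} .
\end{equation*}
By density it is enough to take the supremum over finite-rank $\ga_0$, and in particular over $\ga_0 \in \FS^1$. For such $\ga_0$, cyclicity of the trace and Lemma \ref{duality_lemma} give
\begin{equation*}
\Tr\Dk{\int_0^T dt \, U_V^*(t) f(t) U_V(t)\ga_0} = \int_0^T\int_\T dx\,dt\, f(t,x)\,\rh(U_V(t)\ga_0U_V^*(t))(t,x).
\end{equation*}
Lemma \ref{duality_lemma} is stated for $V \in L^\I_x$. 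I would first apply it to bounded $f$ and then pass to general $f\in L^2_\tx$ by approximation; the approximating sequence can be chosen to keep mean zero.

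The key step is to replace $\rh$ by $\vrh$. Because $U_V(t)$ is unitary, $\Tr(U_V(t)\ga_0U_V^*(t)) = \Tr\ga_0$ for every $t$. So the two densities differ by the constant $(2\pi)^{-1}\Tr\ga_0$, and pairing that constant with $f$ gives $(2\pi)^{-1}\Tr\ga_0 \iint f = 0$ since $\lg f\rg = 0$. This is the only place the mean-zero hypothesis is used. The pairing therefore equals $\iint f\,\vrh(U_V(t)\ga_0U_V^*(t))$. By Cauchy--Schwarz and Lemma \ref{lem:averaged Strichartz V}, it is bounded by $\Ph(T^\th\|V\|_{L^2_\tx})\|f\|_{L^2_\tx}\|\ga_0\|_{\FS^2}$. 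Taking the supremum gives the claim.

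I expect the main obstacle to be the technical justification. One issue is that the operator $\int_0^T U_V^*fU_V\,dt$ must be defined at all when $f$ is only in $L^2$. I would define it first for smooth $f$ and then extend it using the a priori bound just obtained. The other issue is that the bound for finite-rank $\ga_0$ must extend to all of $\FS^2$, which follows because these operators are dense. Both steps are routine approximations once the duality identity is in place.
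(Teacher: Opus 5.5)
Your proposal is correct and is the argument the paper has in mind. The paper states this result as an immediate corollary of Lemma \ref{lem:averaged Strichartz V}, obtained by $\FS^2$-duality: one uses $\lg f \rg = 0$ together with $\Tr(U_V(t)\ga_0U_V^*(t)) = \Tr\ga_0$ to replace $\rh$ by $\vrh$, then applies Cauchy--Schwarz. Your approximation steps (finite-rank $\ga_0$, smooth mean-zero $f$) supply routine justification that the paper leaves implicit.
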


Finally we prove the following estimate that is used in our contraction mapping arguments.
\begin{lemma}\label{lem:difference}
	For any $0\le T \le 2\pi$ and $V, W \in L^2_\tx([0,T]\times\T)$, we have
\begin{align}
	&\|\rh(U_V(t)\ga_0 U_V^*(t)) - \rh(U_W(t)\ga_0 U_W^*(t))\|_{L^2_\tx([0,T]\times\T)}  \\
	&\quad = \|\vrh(U_V(t)\ga_0 U_V^*(t)) - \vrh(U_W(t)\ga_0 U_W^*(t))\|_{L^2_\tx([0,T]\times\T)} \\
	&\quad \ls \Ph(T^{\theta}\|V\|_{L^2_\tx([0,T]\times\T)}+T^{\theta}\|W\|_{L^2_\tx([0,T]\times\T)}) T^{2\theta}  \|V - W\|_{L^2_\tx([0,T]\times\T)} \|\ga_0\|_{\FS^2},
\end{align}
where $\Ph:[0,\I) \to [0,\I)$ is a monotone increasing function.
\end{lemma}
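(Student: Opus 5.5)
The equality of the first two lines is immediate. The renormalisation subtracts $(2\pi)^{-1}\Tr(U_V(t)\ga_0U_V^*(t)) = (2\pi)^{-1}\Tr\ga_0$, and the two subtracted constants coincide because $U_V(t)$ and $U_W(t)$ are unitary (first for $\ga_0\in\FS^1$, then by density in the sense of Remark \ref{remark_non_trace}). For the same reason the difference $g:=\vrh(U_V\ga_0U_V^*)-\vrh(U_W\ga_0U_W^*)$ has $\lg g\rg = 0$. We may therefore estimate $\|g\|_{L^2_\tx}$ via Lemma \ref{duality_zero_mean_lemma}, testing against $f$ with $\|f\|_{L^2_\tx}\le1$ and $\lg f\rg=0$.

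To expand the difference, we use the Duhamel formula relative to $U_W$:
\begin{equation*}
U_V(t)-U_W(t) = -i\int_0^t d\ta\, U_W(t,\ta)(V-W)(\ta)U_V(\ta) =: D(t).
\end{equation*}
This holds since $U_V$ solves the $W$-equation with forcing $-(V-W)U_V$. Then
\begin{equation*}
U_V\ga_0U_V^* - U_W\ga_0U_W^* = D\ga_0U_W^* + U_W\ga_0D^* + D\ga_0D^*,
\end{equation*}
and each term involves no renormalisation. Pairing with $f$ and using Lemma \ref{duality_lemma} and cyclicity, the first term becomes
\begin{equation*}
\Tr\Big[\int_0^T dt\, U_W^*(t)f(t)U_W(t)\int_0^t d\ta\, U_W^*(\ta)(V-W)(\ta)U_V(\ta)\,\ga_0\Big].
\end{equation*}
This is exactly the structure of the $\varrho_1$ term in the proof of Lemma \ref{lem:averaged Strichartz V}, with $U$ replaced by $U_W$ in the outer factors and $V$ replaced by $V-W$ in the inner factor.

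We factor $V-W = V_0V_1$ with $V_0=|V-W|^{1/2}$. Lemma \ref{lem:Stri V} gives $\|V_1U_V\|_{L^2_x\to L^2_\tx}\le \Ph\, T^\theta\|V-W\|_{L^2}^{1/2}$. The remaining factor needs a $\FS^2(L^2_\tx\to L^2_x)$ bound
\begin{equation*}
\Big\|\int_0^T dt\, U_W^*fU_W\int_0^t d\ta\, U_W^*(\ta)V_0(\ta)\Big\|_{\FS^2} \ls \Ph\, T^\theta \|f\|_{L^2}\|V_0\|_{L^4}.
\end{equation*}
For the untruncated version (inner integral over $[0,T]$), this bound follows from Corollary \ref{cor:Stri V renormalised} (which uses $\lg f\rg=0$) together with the dual estimate \eqref{eq:Stri V dual} for $U_W$. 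We then pass to the time-ordered version $\int_0^t$ with \cite[Corollary 4.9]{BHS25}, exactly as in \eqref{eq:partially orthogonal}. Since $\|V_0\|_{L^4}^2=\|V-W\|_{L^2}$, this gives the bound $\Ph\,T^{2\theta}\|V-W\|_{L^2}\|\ga_0\|_{\FS^2}$, where $\Ph$ is evaluated at $T^\theta(\|V\|+\|W\|)$. The second term is handled by taking adjoints.

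The quadratic term $D\ga_0D^*$ is handled like $\varrho_2$ in Lemma \ref{lem:averaged Strichartz V}. We write it as a trace of $\ga_0$ against
\begin{equation*}
\int_0^Td\ta'\,U_V^*(V-W)U_W(\ta')\int_{\ta'}^T dt\,U_W^*fU_W\int_0^t d\ta\, U_W^*(V-W)U_V,
\end{equation*}
split each $V-W$ as $V_0V_1$, and peel off the $U_V$ factors with Lemma \ref{lem:Stri V}. For the double time-ordered middle operator we repeat the argument given there: the decomposition $\int_0^Td\ta\int_\ta^{\ta'}dt$ together with \cite[Lemma 4.1]{BHS25} and \cite[Corollary 4.9]{BHS25}, now with $U_W$ in place of $U$. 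This yields a bound $\ls\Ph\, T^{3\theta}\|V-W\|^2\|\ga_0\|_{\FS^2}$. Bounding one factor $T^\theta\|V-W\|\le T^\theta(\|V\|+\|W\|)$ and absorbing it into $\Ph$ gives the claimed estimate.

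The main obstacle is the $\FS^2$ estimate for the time-truncated operator with the perturbed propagator $U_W$: we must check that the \cite{BHS25} truncation lemmas apply with $U_W$ in place of $U$. If this causes trouble, we will instead expand $U_W = U + D_W$ once more and reduce to the free-propagator estimates already proved in Lemma \ref{lem:averaged Strichartz V}, collecting the extra terms into $\Ph$.
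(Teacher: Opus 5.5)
Your proof is correct, but it organises the difference differently from the paper. The paper never expands $U_V=U_W+D$ symmetrically. It uses the telescoping identity
\[
U_V\ga_0U_V^*-U_W\ga_0U_W^*=(U_V-U_W)\ga_0U_V^*+U_W\ga_0(U_V-U_W)^*,
\]
which is exactly linear in $U_V-U_W$.

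\emph{Relation to your terms.} The paper's second piece is literally your term $U_W\ga_0D^*$. Its first piece equals your $D\ga_0U_W^*+D\ga_0D^*$ combined. The paper treats this as a single $\varrho_1$-type term by writing the same operator through the Duhamel formula relative to $U_V$, namely $U_V(t)-U_W(t)=-i\int_0^t d\ta\,U_V(t,\ta)(V-W)(\ta)U_W(\ta)$. The outer conjugation is then by $U_V$ throughout, so Corollary \ref{cor:Stri V renormalised} applies with $V$.

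\emph{Common estimates.} Both pieces are bounded as in your linear terms:
\begin{itemize}
\item split $V-W=g_0g_1$ and use Lemma \ref{lem:Stri V} for the peeled-off factor;
\item use Corollary \ref{cor:Stri V renormalised} together with \eqref{eq:Stri V dual} for the untruncated operator;
\item use \cite[Corollary 4.9]{BHS25} to pass to the time-ordered one.
\end{itemize}
So the obstacle you flag, namely applying the \cite{BHS25} truncation with a perturbed propagator, is present in the paper too, but only in its single-truncation form.

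\emph{Cost of your route.} The symmetric expansion produces the quadratic term $D\ga_0D^*$. To handle it you must rerun the whole $\varrho_2$ argument of Lemma \ref{lem:averaged Strichartz V} with $U_W$ in place of $U$, including the double time ordering and \cite[Lemma 4.1]{BHS25}. You then absorb one factor $T^\th\|V-W\|\le T^\th(\|V\|+\|W\|)$ into $\Ph$. That absorption is legitimate; the bookkeeping gives $T^{4\th}$ there rather than the $T^{3\th}$ you wrote, which is harmless since $T\le 2\pi$. So your argument closes. The paper's asymmetric telescoping is shorter, and it makes both the quadratic term and your fallback expansion $U_W=U+D_W$ unnecessary.
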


\begin{proof}
	Let $\varrho_V:=\rh(U_V(t)\ga_0 U_V^*(t))$ and $\varrho_W:=\rh(U_W(t)\ga_0 U_W^*(t))$.
	Since $\lg \varrho_V(t) - \varrho_W(t) \rg_x \equiv 0$ for all $t\in[0,T]$, we have
	\begin{multline}
    \|\varrho_V - \varrho_W\|_{L^2_\tx([0,T]\times\T)}
	= \sup
	  \abs{\int_0^T \int_\T  dx \, dt \, f(t,x)(\varrho_V(t,x) - \varrho_W(t,x))} \\
	 \le  \sup
	            \abs{\int_0^T \int_\T  dx \,dt \, f(t,x)\rh((U_V(t) - U_W(t))\ga_0 U_V^*(t)) } \\
	+  \sup \abs{\int_0^T \int_\T  dx \, dt \, f(t,x)\rh(U_W(t) \ga_0 (U_V^*(t)-U_W^*(t)))} =: \SA+ \SB,
	\end{multline}
    where the suprema are taken over the set 
        \begin{equation*}
        \{f \in L^2_{[0,T]\times\T}: \|f\|_{L^2_\tx([0,T]\times\T)}\le 1 \text{ and } \lg f \rg = 0\}.
    \end{equation*}
    We only give details for bounding $\SA$ since a similar argument holds for $\SB$
	\footnote{When estimating $\SB$, instead of using \eqref{eq:UV UW difference}, we write
	\begin{equation}
		U_V(t)-U_W(t) = -(U_W(t)-U_V(t)) =  i \int_0^t U_W(t,\ta)(W(\ta)-V(\ta))U_V(\ta)d\ta.
		\end{equation}}.
	Note that 
	\begin{equation}\label{eq:UV UW difference}
		U_V(t)-U_W(t) =-i\int_0^t d\ta \, U_V(t,\ta)(V(\ta)-W(\ta))U_W(\ta).
	\end{equation}
	Let $f \in L^2_\tx([0,T]\times\T)$ such that $\lg f \rg = 0$. 
	Then by H\"older's inequality, we have
	\begin{multline}
		\abs{\int_0^T \int_\T dx \, dt \, f(t,x) \rh((U_V(t) - U_W(t))\ga_0 U_V^*(t))}\\
		 = \abs{\Tr\Dk{\int_0^T dt \, U_V^*(t) f(t) U_V(t) \int_0^t d\ta \, U_V(\ta) (V(\ta) - W(\ta)) U_W(\ta) \ga_0}}\\
		 \le \No{\int_0^T dt \, U_V^*(t) f(t) U_V(t) \int_0^t d\ta \, U_V(\ta) (V(\ta) - W(\ta)) U_W(\ta)}_{\FS^2} \|\ga_0\|_{\FS^2}.
	\end{multline}
	Set $g_0(\ta) := |V(\ta) - W(\ta)|^{1/2}$ and $g_0(\ta)g_1(\ta) = V(\ta)- W(\ta)$.
	Then
	\begin{multline}
		\No{\int_0^T dt \, U_V^*(t) f(t) U_V(t) \int_0^t d\ta \, U_V(\ta) (V(\ta) - W(\ta)) U_W(\ta)}_{\FS^2} \\
		 \le \No{\int_0^T dt \, U_V^*(t) f(t) U_V(t) \int_0^t d\ta  \, U_V(\ta) g_0(\ta)}_{\FS^2(L^2_{\tau,x} \to L^2_x)} 
		           \|g_1(\ta) U_W(\ta)\|_{L^2_x \to L^2_{\tau,x}}.
	\end{multline}
	By Lemma \ref{lem:Stri V} and Corollary \ref{cor:Stri V renormalised}, it follows that 
	\begin{multline}
		\No{\int_0^T dt \, U_V^*(t) f(t) U_V(t) \int_0^T d\ta \, U_V(\ta) g_0(\ta)}_{\FS^2(L^2_{\tau,x} \to L^2_x)} \\
		 \le \Ph(\|V\|_{L^2_\tx([0,T]\times\T)} T^{\theta})T^{\theta} \|f\|_{L^2_\tx([0,T]\times\T)} \|g_0\|_{L^4_\tx([0,T]\times\T)}.
	\end{multline}
	Using \cite[Corollary 4.9]{BHS25}, we have
	\begin{multline}
		\No{\int_0^T dt \, U_V^*(t) f(t) U_V(t) \int_0^t d\ta \, U_V(\ta) g_0(\ta)}_{\FS^2(L^2_{\tau,x} \to L^2_x)} \\
		 \le \Ph(\|V\|_{L^2_\tx([0,T]\times\T)} T^{\theta})T^{\theta} \|f\|_{L^2_\tx([0,T]\times\T)} \|g_0\|_{L^4_\tx([0,T]\times\T)}.
	\end{multline}
	By Lemma \ref{lem:Stri V}, we have
	\begin{equation}
		\|g_1(\ta) U_W(\ta)\|_{L^2_x \to L^2_{\tau,x}} \le \Ph(\|W\|_{L^2_\tx([0,T]\times\T)}T^{\theta}) T^{\theta} \|g_1\|_{L^4_\tx([0,T]\times\T)}.
	\end{equation}
	It follows that 
	\begin{multline}
		\abs{\int_0^T \int_\T dx\, dt \, f(t,x) \rh((U_V(t) - U_W(t))\ga_0 U_V^*(t))}\\
		 \le  \Ph(T^{\theta}\|V\|_{L^2_\tx([0,T]\times\T)} + T^{\theta}\|W\|_{L^2_\tx([0,T]\times\T)}) T^{2\theta} \|f\|_{L^2_\tx([0,T]\times\T)} \|V - W\|_{L^2_\tx([0,T]\times\T)}.
	\end{multline}
\end{proof}

\subsection{Proof of Propositions \ref{prop:cubic NLSS wellposed} and \ref{prop:cubic NLSS renormalised wellposed}}
\label{SEC:WP:proof}
In this section, we prove Propositions \ref{prop:cubic NLSS wellposed} and \ref{prop:cubic NLSS renormalised wellposed}.
Before proceeding, we need to clarify our notions of solution.
\begin{definition}[Solution to \eqref{eq:NLSS}]\label{def:solution}
	Let $\ga_0 \in \FS^1$ be self-adjoint.
	We call $\ga(t) \in C([0,T];\FS^1)$ a solution to
	\begin{equation}\label{eq:LS}
		\begin{dcases}
			i\pl_t \ga = [- \De  \pm \rh_\ga, \ga], \quad \ga:[0,T]\to \FS^1,& \\
			\ga(0)=\ga_0&
		\end{dcases}
	\end{equation}
	if $\ga(t)\in C([0,T];\FS^1)$ and $\varrho \in L^2_t([0,T];L^2_x(\T))$ satisfy
	\begin{equation}\label{eq:LS Duhamel}
		\begin{dcases}
			\ga(t)= U_{\pm \varrho} (t) \ga_0 U_{\pm \varrho}^* (t), \\
			\varrho = \rho_\ga.
		\end{dcases}	
	\end{equation}
\end{definition}
\begin{remark}
We note that since $\varrho \in L^2_t([0,T];L^2_x(\T))$ is real-valued, we have that the unitary propagator $U_{\pm \varrho}(t)$ is well-defined. Moreover, since $\varrho \in L^2_t([0,T];L^2_x(\T))$, Corollary \ref{cor:Stri V} ensures that $\rh_\ga = \rh(U_{\pm \varrho}(t)\ga_0 U_{\pm \varrho}^*(t))$ is well-defined in $L^2_t([0,T];L^2_x(\T))$.
\end{remark}
\begin{remark}
Let us remark that Definition \ref{def:solution} essentially corresponds to the von Neumann evolution for the density matrix $\gamma$. Another natural notion of solution would be the corresponding Duhamel formulation of the problem, given by
\begin{equation}\label{eq:Duhamel}
	\ga(t) = U(t)\ga_0 U^*(t) - i\int_0^t d\ta \, U(t-\ta)[\pm \rh_\ga(\ta), \ga(\ta)]U(\ta-t). 
\end{equation}
If $\ga(t) \in C([0,T];\FS^2)$ and $\rh_\ga \in L^2_\tx([0,T]\times\T)$, then both the left and right hand sides of \eqref{eq:Duhamel} make sense in $\CH^{-1}$, where
$$\|A\|_{\CH^{-1}} := \|\sd^{-1} A \sd^{-1}\|_{\FS^2}.$$
Here $\langle \cdot \rangle$ denotes the Japanese bracket.
Indeed, the left hand side and the first term of the right hand side of \eqref{eq:Duhamel} are well-defined in $\FS^2$, and in particular, make sense in $\CH^{-1}$.
Moreover, we have
\begin{equation*}
    \No{\int_0^t U(t-\ta)[\pm \rh_\ga(\ta), \ga(\ta)]U(\ta-t)d\ta}_{\CH^{-1}}
    \ls \int_0^t d\ta \No{\sd^{-1}\rh_\ga(\ta)}_{\CB(L^2(\T))} \|\ga(\ta)\|_{\FS^2}.
\end{equation*}
Bounding the operator norm with the Hilbert-Schmidt norm, one has
\begin{equation*}
    \|\sd^{-1} \rh_\ga(\ta)\|_{\CB(L^2(\T))} \le  \|\lg n \rg^{-1} e^{-inx} \rho_\ga(\ta,x)\|_{L^2_x \ell^2_n(\T \times \Z)} \ls \|\rho_\ga(\ta)\|_{L^2_x(\T)}.
\end{equation*}
Therefore
\begin{equation*}
    \No{\int_0^t U(t-\ta)[\pm \rh_\ga(\ta), \ga(\ta)]U(\ta-t)d\ta}_{\CH^{-1}}
    \ls t^{1/2} \|\rh_\ga\|_{L^2_\tx([0,T]\times\T)} \|\ga(\ta)\|_{C([0,T];\FS^2)} < \I.
\end{equation*}
One can show that every solution defined in Definition \ref{def:solution} satisfies the Duhamel formulation \eqref{eq:Duhamel} in $\CH^{-1}$.
Indeed, the identity
$$U_{\pm\varrho}(t) = U(t) \mp i \int_0^t U(t-\ta)\varrho(\ta) U_{\pm \varrho}(\ta)d\ta = U(t) + D(t),$$
implies
$$\ga(t)\equiv U_{\pm\varrho}(t) \ga_0 U_{\pm\varrho}^*(t) = U(t)\ga_0 U^*(t) + U(t)\ga_0 D^*(t) + D(t)\ga_0 U^*(t) + D(t)\ga_0 D^*(t).$$
An elementary but long calculation shows
$$
U(t)\ga_0 D^*(t) + D(t)\ga_0 U^*(t) + D(t)\ga_0 D^*(t)
= - i\int_0^t d\ta U(t-\ta)[\pm \rh_\ga(\ta), \ga(\ta)]U(\ta-t).
$$
However, it is not clear that the converse is true. We leave this as a question for future research.
\end{remark}
Similarly, we define the solution to the renormalised cubic NLS system.
\begin{definition}[Solution to \eqref{eq:NLSS'}]\label{def:solution renormalised}
	Let $\ga_0 \in \FS^2$ be self-adjoint.
	We call $\ga(t) \in C([0,T];\FS^2)$ a solution to
	\begin{equation}\label{eq:LS normalised}
		\begin{dcases}
			i\pl_t \ga = [-\De \pm \vrh_\ga, \ga], \quad \ga:[0,T]\to \FS^2,& \\
			\ga(0)=\ga_0&
		\end{dcases}
	\end{equation}
	if $\ga(t)\in C([0,T];\FS^{2})$ and $\varrho \in L^2_t([0,T];L^2_x(\T))$ satisfy
	\begin{equation}\label{eq:LS Duhamel renormalised}
		\begin{dcases}
			\ga(t)= U_{\pm \varrho} (t) \ga_0 U_{\pm \varrho}^* (t), \\
			\varrho = \vrh_\ga.
		\end{dcases}	
	\end{equation}
\end{definition}
We now give the proof of Propositions \ref{prop:cubic NLSS wellposed} and \ref{prop:cubic NLSS renormalised wellposed}.
\begin{proof}[Proof of Proposition \ref{prop:cubic NLSS wellposed}]
	We first prove the well-posedness portion of the proposition. For $T>0$, define $\Ga:L^2_\tx([0,T]\times\T) \to L^2_\tx([0,T]\times\T)$ by
	$$\Ga[\varrho]:= \rh(U_{\pm \varrho}(t) \ga_0 U_{\pm \varrho}^*(t)).$$
	Let
    \begin{equation*}
    E(T,R):=\Ck{\varrho : \|\varrho\|_{L^2_\tx([0,T]\times\T)} \le R},
    \end{equation*}
	where $T$ will be chosen sufficiently small later. For any $\varrho \in E(T,R)$, Corollary \ref{cor:Stri V} implies
	\begin{equation}
		\|\Ga[\varrho]\|_{L^2_\tx([0,T]\times\T)}
		\le \Ph(\|\varrho\|_{L^2_\tx([0,T]\times\T)}T^{\theta})T^{2\theta}\|\ga_0\|_{\FS^1}
		\le \Ph(RT^{\theta})T^{2\theta}\|\ga_0\|_{\FS^1} \le R
	\end{equation}
	if we take $T=T(R,\|\ga_0\|_{\FS^1})$ small enough.
	Therefore $\Ga:E(T,R) \to E(T,R)$ is well-defined. 
	For any $\varrho_1, \varrho_2 \in E(T,R)$, Lemma \ref{lem:difference} implies that
	\begin{align}
		\|\Ga[\varrho_1] - \Ga[\varrho_2]\|_{L^2_\tx([0,T]\times\T)}
		&\le \Ph(T^{\theta} R) T^{2\theta} \|\varrho_1-\varrho_2\|_{L^2_\tx([0,T]\times\T)} \|\ga_0\|_{\FS^2} \\
		&\le \tw \|\varrho_1-\varrho_2\|_{L^2_\tx([0,T]\times\T)}
	\end{align}
	if we choose sufficiently small $T=T(R,\|\ga_0\|_{\FS^1})>0$.
	So by the contraction mapping theorem there is at least one solution $\varrho \in E(T,R)$.
	Now we define $\ga(t):=U_{\pm \varrho}(t)\ga_0 U_{\pm \varrho}^*(t)$.
	Then, $(\ga(t),\varrho)$ is a solution to \eqref{eq:NLSS}.
	We can prove uniqueness of the solution, continuity of the data-to-solution map, and the blow up alternative in the standard way.

We now turn our attention to the ill-posedness portion of the proposition.
Recalling the $e_n$ defined in \eqref{en_defintion}, set
	\begin{equation}
		\ga_N:= \frac{1}{N^{1/\al + \ep}}\sum_{n=1}^N |e_n \rg \lg e_n|
	\end{equation}
	for sufficiently small $\ep>0$.
    A direct calculation shows that $\ga_N$ is a stationary solution to \eqref{eq:NLSS} because the density is a constant.
    We have
    \begin{equation*}
        \|\ga_N\|_{\FS^\al} = \frac{1}{N^\ep} \to 0 \quad \text{as } N \to \I.
    \end{equation*}
	However we also have
	\begin{equation}
		\|\CS_2(\ga_N)\|_{L^2_\tx([0,T]\times\T)} = \|\rh_{\ga_N}\|_{L^2_\tx([0,T]\times\T)}  \sim N^{1-1/\al-\ep} \to \I \quad \text{as } N \to \I,
	\end{equation}
	where we chose small $\ep>0$ such that $1-1/\al -\ep>0$.
    Therefore, there is no continuous extension $\CS_2:E_R^{\al} \to L^2_\tx([0,T]\times\T)$ for $\al>1$.
\end{proof}

\begin{proof}[Proof of Proposition \ref{prop:cubic NLSS renormalised wellposed}]
We omit the proof of well-posedness part because it is similar to the proof of Proposition \ref{prop:cubic NLSS wellposed}, except that Lemma \ref{lem:averaged Strichartz V} means that $\Gamma$ is now well-defined for $\gamma_0 \in \FS^2$.

We now prove the ill-posedness. Suppose that we have a continuous extension of the data-to-density map $\mathcal{S}_2:E_R^\al \to L^2_\tx([0,T]\times\T)$ for $\al> 2$. Recall the functions $e_n$ from \eqref{en_defintion}. Set $\phi_n = 2^{-1/2}(e_n(x) + e_{-n}(x))$.
Define
\begin{equation}
	\ga_{0,N} := \frac{1}{N^{1/2}}\sum_{n=1}^N |\phi_n\rg \lg \phi_n|.
\end{equation}
By assumption, for each $N \in \BN$, there exists $\varrho_N \in L^2_\tx([0,T]\times\T)$ such that
\begin{equation}
	\varrho_N =\vrh(U_{\pm \varrho_N}(t) \ga_{0,N} U_{\pm \varrho_N}^*(t)).
\end{equation}
Moreover, since $\|\ga_{0,N}\|_{\FS^\al} \to 0$ as $N\to \I$ because of $\al > 2$ and $\CS_2 : E_R^\al \to L^2_\tx([0,T]\times\T)$ is continuous, we have 
$$
\|\varrho_N\|_{L^2_\tx([0,T]\times\T)} \to 0 \quad \text{as } N \to\I.
$$
Therefore, we obtain
\begin{align}
	\|\vrh(U(t)\ga_{0,N} U^*(t))\|_{L^2_\tx([0,T]\times\T)}
		&\le \|\varrho_N\|_{L^2_\tx([0,T]\times\T)}\\
		&\quad  + \|\vrh(U_{\pm\varrho_N}(t)\ga_{0,N} U_{\pm\varrho_N}^*(t)) - \vrh(U(t)\ga_{0,N} U^*(t))\|_{L^2_\tx([0,T]\times\T)}.
\end{align}
By the proof of Theorem \ref{thm:L2 bound renormalised}, we have
\begin{equation}
	\|\vrh(U(t)\ga_{0,N} U^*(t))\|_{L^2_\tx([0,T]\times\T)} \sim 1.
\end{equation}
By Lemma \ref{lem:difference}, we have
\begin{multline*}
 	\|\vrh(U_{\pm\varrho_N}(t)\ga_{0,N} U_{\pm\varrho_N}^*(t)) - \vrh(U(t)\ga_{0,N} U^*(t))\|_{L^2_\tx([0,T]\times\T)} \\
	\quad \le \Ph(T^{\theta}\|\varrho_N\|_{L^2_\tx([0,T]\times\T)}) T^{2\theta} \|\varrho_N\|_{L^2_\tx([0,T]\times\T)} \|\ga_{0,N}\|_{\FS^2} \\
	\quad = \Ph(T^{\theta}\|\varrho_N\|_{L^2_\tx([0,T]\times\T)}) \|\varrho_N\|_{L^2_\tx([0,T]\times\T)} T^{2\theta}.   
\end{multline*}
Hence
\begin{equation}
	1
	\ls \|\varrho_N \|_{L^2_\tx([0,T]\times\T)}
	+ \Ph(T^{\theta}\|\varrho_N\|_{L^2_\tx([0,T]\times\T)}) \|\varrho_N\|_{L^2_\tx([0,T]\times\T)} T^{2\theta}.
\end{equation}
Taking $N \to \I$, it follows that
\begin{equation}
	1 \ls 0,
\end{equation}
which is a contradiction. Therefore no continuous extension exists.
\end{proof}

\section{Renormalisation on \texorpdfstring{$\T^d$}{Td} with \texorpdfstring{$d\ge 2$}{d2}}\label{sec:higher dimensional case}
Everything we have discussed so far has been for the one dimensional torus. In this section, we consider the case of $\T^d$ for $d \ge 2$. Set $p^* := \frac{d+2}{d}$ and write
\begin{align}
	&e_n(x) := \frac{1}{(2\pi)^{d/2}}e^{in\cdot x}, \quad \wh{\phi}(n) = \int_{\T^d} \ov{e_n(x)} \phi(x)dx, \\
	&U(t)\phi(x) = \sum_{n \in \Z^d} e_n(x)e^{-it|n|^2} \wh{\phi}(n), \quad P_{\le N} \phi (x):= \sum_{|n_1|, \dots, |n_d|\le N} e_n(x)\wh{\phi}(n).
\end{align}

In \cite{Nak20}, Nakamura proved the following theorem.
\begin{theorem}[Theorem 1.4 in \cite{Nak20}] \label{thm:Nakamura full version}
	Let $d \ge 1$ and $\si \in (0,1/p^*]$.
	Then, if $1/\al > 1 - \si/d$, we have
	\begin{equation}\label{eq:Nakamura general}
		\|\rh(U(t)P_{\le N} \ga_0 P_{\le N} U^*(t))\|_{L^{p^*}_{t,x}(\T^{1+d})} \ls_\si N^\si \|\ga_0\|_{\FS^\al}.
	\end{equation}
	This is almost sharp in the sense that \eqref{eq:Nakamura general} fails if $1/\al < 1 - \si/d$.
\end{theorem}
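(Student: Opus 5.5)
The plan is to run the $d$-dimensional analogue of the interpolation proof of Proposition \ref{thm:Nakamura paraphrase}: two endpoint estimates, then complex interpolation in the Schatten exponent.

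\textbf{First endpoint (trace class).} For $\al=1$ we use the triangle inequality and the one-body Strichartz estimate on $\T^d$ of Bourgain--Demeter at the exponent $2p^*=2(d+2)/d$:
\begin{equation*}
\|U(t)P_{\le N}\phi\|_{L^{2p^*}_{t,x}(\T^{1+d})}\ls_\ep N^\ep\|\phi\|_{L^2_x}.
\end{equation*}
This yields
\begin{equation*}
\|\rh(U(t)P_{\le N}\ga_0P_{\le N}U^*(t))\|_{L^{p^*}_{t,x}}\ls_\ep N^{\ep}\|\ga_0\|_{\FS^1}.
\end{equation*}

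\textbf{Second endpoint (pointwise bound).} The proof of Lemma \ref{lem:pointwise} carries over verbatim: $\Tr(P_{\le N}|V|)\sim N^d\|V\|_{L^1}$, so
\begin{equation*}
\|\rh(P_{\le N}AP_{\le N})\|_{L^\I_x}\ls N^d\|A\|_{\CB}.
\end{equation*}
Interpolating the $L^1_{t,x}$ bound $\|\rh\|_{L^1}\ls\|\ga_0\|_{\FS^1}$ (by unitarity) with this $L^\I$ bound at the $L^{p^*}$ level gives
\begin{equation*}
\|\rh\|_{L^{p^*}}\ls N^{d(1-1/p^*)}\|\ga_0\|_{\FS^{p^*}}.
\end{equation*}
Since $d(1-1/p^*)=d\cdot 2/(d+2)$, this exponent is larger than the claimed $d/p^*=d^2/(d+2)$ when $d=1$, so it is the wrong endpoint. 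Instead we use the point where $\si=1/p^*$ and $1/\al=1-1/(dp^*)$. To reach it, interpolate the trace-class estimate with the $L^\I$/$\CB$ bound for the \emph{mixed} pair $(L^{p^*},\FS^\al)$, via a Riesz--Thorin scheme in both the Lebesgue and Schatten scales. Parametrise by $\theta$:
\[
\frac1{p^*}=\frac{1-\theta}{p^*}+\frac{\theta}{\I}
\]
forces $\theta=0$, so this direct scheme fails. The fallback is to interpolate between $L^{p^*}$ with $\FS^1$ and the $N^{d}$ $L^\I$ bound after first using H\"older:
\begin{equation*}
\|\rh\|_{L^{p^*}}\le\|\rh\|_{L^1}^{1/p^*}\|\rh\|_{L^\I}^{1-1/p^*}.
\end{equation*}
This holds for a single $\ga_0$, but the two factors carry different Schatten norms. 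So at this step I would instead interpolate analytically in $\ga_0$ between $(L^{p^*}_{t,x},\FS^1,N^\ep)$ and $(L^{p^*}_{t,x},\FS^{\I},N^{d/p^*}\cdot\text{?})$. This requires an estimate $\|\rh\|_{L^{p^*}}\ls N^{d}\|\ga_0\|_{\FS^\I}$, which follows trivially on the compact torus from the $L^\I$ bound. Interpolating $(\FS^1,N^\ep)$ with $(\FS^\I,N^d)$ at parameter $\theta$ gives $1/\al=1-\theta$ and $\si=d\theta+\ep$, i.e.\ exactly $1/\al>1-\si/d$. This proves sufficiency for every $\si$; the restriction $\si\le 1/p^*$ only records where the statement is sharp.

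\textbf{Necessity.} Take the counterexample $\ga_0=P_{\le N}$, so $\rh\equiv(2N+1)^d/(2\pi)^d$ is constant. Then $\|\rh\|_{L^{p^*}}\sim N^d$ while $\|\ga_0\|_{\FS^\al}\sim N^{d/\al}$. The estimate would therefore require $d\le\si+d/\al$, i.e.\ $1/\al\ge1-\si/d$.

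\textbf{Main obstacle.} The hard input is the first endpoint: it needs the $\ep$-loss decoupling Strichartz estimate on $\T^d$ at the endpoint $L^{2(d+2)/d}$, which I would cite as a black box. The remaining steps are routine bookkeeping of the complex interpolation in Schatten classes.
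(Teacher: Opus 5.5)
Your final argument is correct, but it takes a genuinely different and much more elementary route than the paper.

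\textbf{The paper's route.} It proves the $\ep$-free endpoint $\|\rh(U(t)P_{\le N}\ga_0P_{\le N}U^*(t))\|_{L^{p^*}_{t,x}(\T^{1+d})}\ls N^{1/p^*}\|\ga_0\|_{\FS^{\al^*}}$ with $\al^*=\frac{d+2}{d+1}$. To do so it cuts $\T_t$ into $\sim N$ intervals of length $1/N$. On each interval it uses the short-time dispersive bound to control $\|M_fU(t)P_{\le N}M_g\|_{\FS^{d+2}}$, then runs the Frank--Lewin--Lieb--Seiringer multilinear Hardy--Littlewood--Sobolev argument. Only afterwards does it interpolate with the Bourgain--Demeter $\FS^1$ bound.

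\textbf{Your route.} You interpolate the Bourgain--Demeter $\FS^1$ bound directly with the trivial bound $\FS^\I\to L^{p^*}_{t,x}$, whose constant is $N^d$. The target $L^{p^*}$ stays fixed and only the Schatten index moves. This is legitimate for three reasons:
\begin{itemize}
\item the map $\ga_0\mapsto\rh(U(t)P_{\le N}\ga_0P_{\le N}U^*(t))$ is linear and defined on all of $\CB(L^2)$, since $P_{\le N}$ has finite rank;
\item the points $(\si,1/\al)=(0,1)$ and $(d,0)$ both lie on the line $1/\al=1-\si/d$, as does the paper's endpoint $(1/p^*,1/\al^*)$;
\item given $1/\al>1-\si/d$, you can take $\theta=1-1/\al<\si/d$ and $\ep$ so small that $(1-\theta)\ep+d\theta\le\si$.
\end{itemize}

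\textbf{What each route buys.} Yours shows that the statement as written, with strict inequality, is a soft consequence of decoupling. It in fact holds for every $\si\in(0,d)$. Your example $\ga_0=P_{\le N}$ shows the line is sharp for every $\si$, so the restriction $\si\le 1/p^*$ is not ``where the statement is sharp''; it is inherited from the endpoint formulation. The paper's route gives the closed endpoint $\si=1/p^*$, $\al=\al^*$ with no $\ep$-loss, whereas your interpolation only gives $N^{1/p^*+\ep}$ there. That endpoint is the genuinely orthonormal, $\R^d$-type content of Nakamura's theorem. Your necessity argument via constant density is correct; the paper does not reprove necessity for general $d$.

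\textbf{Write-up.} Drop the abandoned Riesz--Thorin and H\"older detours and the stray ``?''. Your claim that $\|\rh\|_{L^{p^*}}\ls N^{d(1-1/p^*)}\|\ga_0\|_{\FS^{p^*}}$ is ``the wrong endpoint'' is mistaken. The point $(\si,1/\al)=(\frac{2d}{d+2},\frac{d}{d+2})$ satisfies $1/\al=1-\si/d$ exactly, so interpolating it with the $\FS^1$ bound would also have worked. Your comparison with ``$d/p^*$'' confused the target $\si=1/p^*$ with $d/p^*$. It also compared $\si$-values at different Schatten exponents instead of comparing points in the $(\si,1/\al)$ plane.
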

The $d$-dimensional analogue of the renormalised density is given by
\begin{equation}
	\vrh_A(x) = \rh_A(x) - \frac{1}{(2\pi)^d} \int_{\T^d} dy \, \rh_A(y) = \rh_A(x) - \frac{1}{(2\pi)^d} \Tr A.  
\end{equation}
It is thus natural to consider an estimate in the form of
\begin{equation}\label{eq:general renormalised}
	\|\vrh(U(t)P_{\le N}\ga_0 P_{\le N} U^*(t))\|_{L^{p^*}_{t,x}(\T^{1+d})} \ls_\si N^\si \|\ga_0\|_{\FS^\al}.
\end{equation}
In this section, we first give a proof of Theorem \ref{thm:Nakamura full version} in an alternative way to \cite{Nak20}. We then give a necessary condition on $(\si,\al)$ for \eqref{eq:general renormalised} to hold.

\subsection{Alternative proof of \texorpdfstring{\cite[Theorem 1.4]{Nak20}}{Theorem 1.4}}
We first prove the following lemma.

\begin{lemma}\label{lem:short time dispersive estimate}
	Let $d \ge 1$, $N \in \BN$.
	Then, we have
	\begin{equation}
		\|M_f U(t)P_{\le N} M_g\|_{\FS^p} \ls |t|^{-d/p} \|f\|_{L^p(\T^d)} \|g\|_{L^p(\T^d)} \quad \text{if } |t|\le \frac{1}{N}
	\end{equation}
	for any $f, g \in L^p(\T^d)$ and $p \in [2,\I]$, where $M_F$ is the multiplication operator by $F:\T^d \to \C$. 
\end{lemma}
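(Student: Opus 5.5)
Our approach is complex interpolation in $p$ between the endpoints $p=\I$ and $p=2$, applied to the analytic family obtained by letting the exponents of $f$ and $g$ vary (the Stein interpolation scheme used for Schatten bounds in \cite{FLLS24}).

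\textbf{Endpoint $p=\I$.} We need $\|M_f U(t)P_{\le N} M_g\|_{\FS^\I} \le \|f\|_{L^\I}\|g\|_{L^\I}$. This holds for every $t$, because $U(t)P_{\le N}$ is a contraction on $L^2(\T^d)$.

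\textbf{Endpoint $p=2$.} By Lemma \ref{lem:HS}, the Hilbert--Schmidt norm equals the $L^2(\T^d\times\T^d)$ norm of the kernel $f(x)K_{t,N}(x-y)g(y)$. Here
\begin{equation*}
K_{t,N}(z)=(2\pi)^{-d}\sum_{|n_j|\le N} e^{in\cdot z - it|n|^2}
\end{equation*}
factorises as a product of one-dimensional kernels. Hence
\begin{equation*}
\|M_fU(t)P_{\le N}M_g\|_{\FS^2}\le \|K_{t,N}\|_{L^\I}\|f\|_{L^2}\|g\|_{L^2},
\end{equation*}
and it suffices to prove the short-time dispersive bound $\|K^{(1)}_{t,N}\|_{L^\I(\T)}\ls |t|^{-1/2}$ for $|t|\le 1/N$. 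If $|t|\le N^{-2}$, the trivial bound $2N+1\le |t|^{-1/2}$ already suffices. For $N^{-2}\le|t|\le N^{-1}$, we use Weyl/van der Corput: the phase $nz-tn^2$ has second difference $2t$ and first differences that vary by at most $4N|t|\le 4<2\pi$ across the range $|n|\le N$. The standard estimate (van der Corput's second-derivative test, or comparison with the Gaussian/Fresnel sum via Poisson summation, which is valid in this short-time window) then gives $\ls |t|^{-1/2}$. Raising to the $d$-th power gives $\|K_{t,N}\|_{L^\I}\ls |t|^{-d/2}$, which is exactly $|t|^{-d/p}$ with $p=2$.

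\textbf{Interpolation.} For $p\in(2,\I)$, normalise $\|f\|_{L^p}=\|g\|_{L^p}=1$. Consider the analytic family
\begin{equation*}
T_z = M_{f_z}U(t)P_{\le N}M_{g_z},\qquad f_z=|f|^{pz/2}\operatorname{sgn} f,\qquad g_z=|g|^{pz/2}\operatorname{sgn} g,
\end{equation*}
on the strip $0\le\operatorname{Re} z\le1$. On $\operatorname{Re} z=0$ we have $\|T_z\|_{\FS^\I}\le1$, and on $\operatorname{Re} z=1$ we have $\|T_z\|_{\FS^2}\ls|t|^{-d/2}$. Three-lines interpolation for Schatten classes at $z=2/p$ gives $\|T_{2/p}\|_{\FS^p}\ls |t|^{-d/p}$, and $T_{2/p}=M_fU(t)P_{\le N}M_g$. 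For the technical admissibility of the family, we first take $f,g$ simple functions and then pass to general $f,g$ by density.

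The main obstacle is the uniform kernel bound $|t|^{-1/2}$ in the window $N^{-2}\le|t|\le N^{-1}$, and in particular checking the hypothesis of the van der Corput lemma in its discrete form. That hypothesis is where the restriction $|t|\le 1/N$ is essential: it keeps the derivative of the phase from wrapping more than $O(1)$ times.
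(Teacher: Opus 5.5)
Your proposal is correct and follows the paper's proof. Both use the trivial $\FS^\infty$ bound, the $\FS^2$ bound from Lemma~\ref{lem:HS} together with the short-time kernel estimate $|[U(t)P_{\le N}](x,x')|\lesssim |t|^{-d/2}$ for $|t|\le 1/N$, and interpolation between the two. The only differences are that the paper cites this kernel bound from Nakamura and Kenig--Ponce--Vega instead of deriving it via van der Corput's second-derivative test as you sketch, and it leaves the Stein-type complex interpolation implicit.
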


\begin{proof}
	Let us recall from  \cite[Equation (3.1)]{Nak20} or \cite[Equation (5.9)]{KPV91} the short-time dispersive estimate given by
	\begin{equation}
		\|U(t)P_{\le N}(x,x')\|_{L^\I_{x,x'}(\T^{d+d})} \ls \frac{1}{|t|^{d/2}} \quad \text{for } |t|\le \frac{1}{N},
	\end{equation}
	where $U(t)P_{\le N}(x,x')$ is the integral kernel of $U(t)P_{\le N}:L^2(\T^d) \to L^2(\T^d)$.
	It follows that
	\begin{equation}\label{eq:bilinear S2}
    \begin{aligned}
    \|M_f U(t)P_{\le N} M_g\|_{\FS^2}
		&= \|f(x)\Dk{U(t)P_{\le N}}(x,x')g(x')\|_{L^2_{x,x'}(\T^{d+d})} \\
		&\ls |t|^{-d/2} \|f\|_{L^2(\T^d)} \|g\|_{L^2(\T^d)},
    \end{aligned}
	\end{equation}
	where we used Lemma \ref{lem:HS}.
	Interpolating between \eqref{eq:bilinear S2} and
	\begin{equation}\label{eq:bilinear B}
		\|M_f U(t)P_{\le N} M_g\|_{\CB(L^2(\T^d))} \le \|f\|_{L^\I(\T^d)} \|g\|_{L^\I(\T^d)}
	\end{equation}
	 completes the proof.
\end{proof}

Next, we give a short-time estimate.
\begin{lemma}\label{lem:short time}
	Let $d \ge 1$, $N \in \N$, and $I \subset [0,2\pi]$ be an interval such that $|I|\le 1/N$.
	Then, we have
	\begin{equation}\label{eq:short time}
		\No{\rh(U(t)P_{\le N}\gamma_0 P_{\le N}U^*(t))}_{L^{p^*}_\tx(I\times \T^d)} \ls \|\ga_0\|_{\FS^{\al^*}}, 
	\end{equation}
	where $\al^* = \frac{d+2}{d+1}$.
\end{lemma}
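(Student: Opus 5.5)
We argue by duality, as in the Frank--Lewin--Lieb--Seiringer approach, using Lemma \ref{lem:short time dispersive estimate} in place of the dispersive estimate on $\R^d$. Write $q := (p^*)' = \frac{d+2}{2}$ and note that $(\al^*)' = d+2 = 2q$.

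By Lemma \ref{duality_lemma} and H\"older's inequality for Schatten classes (Lemma \ref{Holder_inequality}), estimate \eqref{eq:short time} is equivalent to
\begin{equation*}
	\No{\int_I dt \, P_{\le N}U^*(t) V(t) U(t) P_{\le N}}_{\FS^{2q}} \ls \|V\|_{L^{q}_\tx(I\times\T^d)}
\end{equation*}
for all $V \in L^q_\tx(I\times\T^d)$. We write $V = |V|^{1/2}\cdot(\mathrm{sgn}\,V)|V|^{1/2}$ and set $W := |V|^{1/2} \in L^{2q}$.

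The standard $TT^*$ argument reduces the dual estimate to a bound on $A := M_W U(t) P_{\le N} : L^2_x \to L^2_\tx(I\times\T^d)$. Indeed,
\begin{equation*}
	\int_I P_{\le N}U^*VUP_{\le N}\,dt = A^* M_{\mathrm{sgn} V} A,
\end{equation*}
so it suffices to show that $A A^*$, which is bounded by the operator $K$ on $L^2(I\times\T^d)$ with kernel
\begin{equation*}
	W(t,x)\,[U(t-s)P_{\le N}^2](x,y)\,W(s,y),
\end{equation*}
lies in $\FS^{q}$ with $\|K\|_{\FS^q}\ls \|W\|_{L^{2q}}^2$. Since $\|A^*\mathrm{sgn}A\|_{\FS^{2q}} \le \|A\|_{\FS^{4q}}^2$, it is in fact enough to bound $\|A\|_{\FS^{4q}}^2 = \|AA^*\|_{\FS^{2q}}$, which is weaker than the $\FS^q$ bound. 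The identity $P_{\le N}^2 = P_{\le N}$ removes the second projection from the kernel.

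The operator $K$ is handled by the FLLS complex interpolation argument with the analytic family
\begin{equation*}
	K_z := M_{W^{z}}\, \big(\II_{\{|t-s|\le 1/N\}}\, (t-s)_{\pm}^{\,\cdot}\big)\, M_{W^{z}},
\end{equation*}
where the time kernel is modified by $|t-s|^{z'}$ weights, exactly as in \cite{FLLS24}. Because $|I|\le 1/N$, every pair $t,s\in I$ satisfies $|t-s|\le 1/N$, so Lemma \ref{lem:short time dispersive estimate} applies throughout. On the $\FS^2$ line, Lemma \ref{lem:HS} together with the kernel bound $|t-s|^{-d/2}$ gives a Hardy--Littlewood--Sobolev integral in the time variable alone, with the spatial factors bounded by $\|W(t)\|_{L^2_x}\|W(s)\|_{L^2_x}$. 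On the $\CB$ line, the factor $|t-s|^{i\tau}$ gives a bounded operator by $L^2$ boundedness of the corresponding time multiplier, uniformly in $N$. Interpolating between the two lines yields
\begin{equation*}
	\|A\|_{\FS^{4q}}\ls \|W\|_{L^{2q}},
\end{equation*}
with exponents matching the Strichartz admissibility $\frac{2}{p}+\frac{d}{q}=d$ at $q=p^*$. This is exactly the FLLS computation, which produces $\al^* = \frac{2q}{2q-1} = \frac{d+2}{d+1}$.

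The main obstacle is this interpolation step. We need the one-dimensional HLS inequality in $t$ with exponent $d/2\cdot\frac{2}{q}$ strictly below $1$, and we need the endpoint bounds to remain uniform because $I$ is a short interval. First we will try to invoke the FLLS proof verbatim (their Theorem 1 mechanism), with the dispersive kernel replaced by the truncated one and all $x$-integrals taken over $\T^d$. The truncation to $|t-s|\le 1/N$ is harmless since it only shrinks the support.
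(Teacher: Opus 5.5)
Your duality and $TT^*$ reductions are fine: they correctly reduce \eqref{eq:short time} to $\|AA^*\|_{\FS^{d+2}}\ls\|W\|_{L^{d+2}_\tx(I\times\T^d)}^2$. The gap is in the interpolation step, which you flag as the main obstacle, and as sketched it does not close.

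First, the $\FS^q$ bound for $K$ is false, not merely stronger than needed. For $\ga_0=P_{\le N}$ and $|I|=1/N$, both sides of \eqref{eq:short time} are $\sim N^{d(d+1)/(d+2)}$, so $\al^*$ cannot be raised to $q'=(d+2)/d$. The HLS exponent $d/q=2d/(d+2)$ that you require to be below $1$ belongs to that false target: it is Lemma \ref{lem:short time dispersive estimate} applied at $p=q$ rather than $p=2q$. It equals $1$ for $d=2$ and exceeds $1$ for $d\ge3$, so your step breaks down for every $d\ge2$.

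Second, and for every $d$, the endpoints you describe cannot give a diagonal estimate. An $\FS^2$ bound obtained by HLS in time with factors $\|W(t)\|_{L^2_x}\|W(s)\|_{L^2_x}$ controls the weights in $L^{2r}_tL^2_x$ with $r>1$. Interpolating against $L^\infty_\tx$ weights on the bounded line then gives the mixed norm $L^{2r/\theta}_tL^{2/\theta}_x$, never $L^{d+2}_\tx$. Also, a bounded line carrying only $|t-s|^{i\tau}$ has real part $0$, which puts the unweighted kernel on the boundary of the strip rather than strictly inside it.

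A working version of your route is the one of \cite{FS17}, used in \cite{Nak20}. Take the kernel $\Gamma(z+1)^{-1}(t-s)_+^{z}\,[U(t-s)P_{\le N}](x,y)$, plus the analogous $t<s$ piece.
\begin{itemize}
\item On $z=-1+i\tau$ it is, mode by mode, a Fourier multiplier in $t$ of modulus at most $e^{\pi|\tau|/2}$, hence bounded on $L^2$ uniformly in $N$.
\item On $z=d/2+i\tau$ the kernel is bounded for $t,s\in I$ by the short-time dispersive estimate; this is where $|I|\le 1/N$ enters. Lemma \ref{lem:HS} then gives the $\FS^2$ bound with $L^2_\tx$ weights, with no HLS at all.
\item Interpolating at $z=0$, i.e.\ $\theta=2/(d+2)$, gives exactly $\FS^{d+2}$ with $L^{d+2}_\tx$ weights.
\end{itemize}

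The paper avoids analytic families altogether, and this, not complex interpolation, is what following \cite{FLLS24} verbatim means. Since $d+2$ is an integer and one may take $V\ge0$, it expands $\Tr[(P_{\le N}\int_I U^*(t)V(t)U(t)\,dt\,P_{\le N})^{d+2}]$ as a $(d+2)$-fold time integral of cyclic traces of products of $\sqrt{V(t_j)}P_{\le N}U(t_j-t_{j+1})\sqrt{V(t_{j+1})}$. It bounds each factor in $\FS^{d+2}$ by Lemma \ref{lem:short time dispersive estimate} with $p=d+2$, which applies since all $|t_j-t_{j+1}|\le 1/N$. It then closes with the multilinear HLS inequality of \cite{FLLS24}. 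There the singularity is $|t_j-t_{j+1}|^{-d/(d+2)}$, i.e.\ exponent $d/(2q)<1$ for all $d$, half the one you computed. Complex interpolation is what one needs for off-diagonal or non-integer exponents; the trace expansion is shorter here because the diagonal exponent $d+2$ is an integer.
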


\begin{remark}
	In \cite{Nak20}, Nakamura proved the equivalent estimate by the duality argument introduced in \cite{FS17}. However, here we show we can prove Lemma \ref{lem:short time} using the same argument as \cite[Proof of Theorems 1 and 2]{FLLS24}, which is an older paper than \cite{FS17}.
	The proof below has no originality; however we think that it is meaningful to point out that we can apply the method in \cite{FLLS24} to our problem.
\end{remark}

\begin{proof}
By a Schatten duality argument, \eqref{eq:short time} is equivalent to
\begin{equation}
	\No{P_{\le N}\int_I dt U^*(t)V(t) U(t) P_{\le N}}_{\FS^{d+2}} \ls \|V\|_{L^{\frac{d+2}{2}}_\tx(I\times\T^d)}.
\end{equation}
We can assume without loss of generality that $V(t,x)\ge 0$ for all $(t,x)\in I \times \T^d$.
We have 
\begin{align}
	&\No{P_{\le N}\int_I dt U^*(t)V(t) U(t) dt P_{\le N}}_{\FS^{d+2}}^{d+2}
	= \Tr\Dk{\K{P_{\le N} \int_I dt U^*(t) V(t) U(t)P_{\le N} }^{d+2}} \\
	&\quad = \int_I dt_1 \int_Idt_2 \cdots \int_I d t_{d+2} \\
    &\qquad \times \Tr\K{\sqrt{V(t_1)} P_{\le N} U(t_1 - t_2) \sqrt{V(t_2)} \sqrt{V(t_2)} U(t_2 - t_3) \cdots U(t_{d+2} - t_1)\sqrt{V(t_1)} } \\
	&\quad \le \int_I dt_1 \int_Idt_2 \cdots \int_I d t_{d+2} \No{\sqrt{V(t_1)} P_{\le N} U(t_1 - t_2) \sqrt{V(t_2)}}_{\FS^{d+2}} \cdots \\
	&\qquad \cdots\No{\sqrt{V(t_{d+2})} P_{\le N} U(t_{d+2} - t_1) \sqrt{V(t_1)}}_{\FS^{d+2}}.
\end{align}
Since $|t_j - t_{j+1}| \le 1/N$, Lemma \ref{lem:short time dispersive estimate} with $p = d+2$ implies
\begin{multline}
\No{P_{\le N}\int_I dt U^*(t)V(t) U(t) dt P_{\le N}}_{\FS^{d+2}}^{d+2} \\
	\ls \int_I dt_1 \int_Idt_2 \cdots \int_I d t_{d+2}
	       \frac{\|V(t_1)\|_{L^{\frac{d+2}{2}}(\T^d)} \cdots \|V(t_{d+2})\|_{L^{\frac{d+2}{2}}(\T^{d})}}{|t_1 - t_2|^{\frac{d}{d+2}}\cdots |t_{d+1}-t_{d+2}|^{\frac{d}{d+2}} |t_{d+2} - t_1|^{\frac{d}{d+2}}}.
\end{multline}
The result follows from the multilinear Hardy--Littlewood--Sobolev inequality, see \cite[Theorem 4]{FLLS24}.
\end{proof}

We now prove Theorem \ref{thm:Nakamura full version}.
\begin{proof}
	It suffices to prove
	\begin{equation}\label{eq:Td Nsigma}
		\No{\rh(U(t)P_{\le N}\ga_0 P_{\le N}U^*(t))}_{L^{p*}_\tx(\T^{1+d})} \ls N^{1/p^*} \|\ga_0\|_{\FS^{\al^*}}.
	\end{equation}
    Indeed, using the one-body estimate from \cite{Bou93, BD13}
	\begin{equation}
		\No{|U(t)P_{\le N} \phi|^2}_{L^{p^*}_\tx(\T^{1+d})} \ls_\ep N^\ep \|\phi\|_{L^2_x(\T^d)}^2,
	\end{equation}
	and the triangle inequality, we have 
\begin{equation}\label{eq:Td Nepsilon}
	\No{\rh(U(t)P_{\le N}\ga_0 P_{\le N} U^*(t))}_{L^{p*}_\tx(\T^{1+d})} \ls_\ep N^\ep \|\ga_0\|_{\FS^1}.
\end{equation}
Interpolating between \eqref{eq:Td Nsigma} and \eqref{eq:Td Nepsilon}  yields \eqref{eq:Nakamura general}. To show \eqref{eq:Td Nsigma}, we decompose $T^* = \sqcup_{j\in J} I_j$, where $|I_j|\le 1/N$ and $\# J \ls N$.
Therefore by Lemma \ref{lem:short time}, we conclude that 
\begin{align}
	\No{\rh(U(t)P_{\le N}\ga_0 P_{\le N} U^*(t))}_{L^{p*}_\tx(\T^{1+d})}
	&\ls \bigg(\sum_{j\in J}\No{\rh(U(t)P_{\le N}\ga_0 P_{\le N}U^*(t))}_{L^{p*}_\tx(I_j\times\T^{d})}^{p^*}\bigg)^{1/p*} \\
	&\ls (\# J)^{1/p^*} \|\ga_0\|_{\FS^{\al^*}}\ls N^{1/p*} \|\ga_0\|_{\FS^{\al^*}}.
\end{align}
\end{proof}

\subsection{A ``negative'' result on higher dimensional toruses}
In this section, we show a strong necessary condition on $(\al,\si)$ for the orthonormal Strichartz estimates satisfied by the renormalised density. In particular, we prove the following result.
\begin{proposition}
	Let $d \ge 2$, $\si \in (0,1/p^*]$, and $N \in \BN$.
	Suppose that
	\begin{equation}
		\No{\vrh(U(t)P_{\le N} \ga_0 P_{\le N}U^*(t))}_{L^{p^*}(\T^{1+d})} \ls_\si N^\si \|\ga_0\|_{\FS^\al}
	\end{equation}
	for all $\ga_0 \in\FS^\al$. Then $1/\al \ge 1 - \frac{\si}{d-1}$.
\end{proposition}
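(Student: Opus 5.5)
We will prove the necessary condition by building an explicit counterexample in the spirit of the necessity part of Theorem \ref{thm:L3 bound renormalised}. The idea is that renormalisation only removes the zero Fourier mode. So if the data are spread over a $(d-1)$-dimensional family of frequencies, the non-renormalised Nakamura counterexample survives in $d-1$ dimensions. Concretely, we take $\ga_0 := \sum_{n\in S}|\phi_n\rg\lg\phi_n|$. Here $S=\{n\in\Z^d:|n_j|\le N\}$ restricted to $n_1\in\{1,\dots,N\}$, and the $\phi_n$ are tensor products: the factor $2^{-1/2}(e_{n_1}+e_{-n_1})$ in the first variable, times the plane waves $e_{n'}$ in $x'=(x_2,\dots,x_d)\in\T^{d-1}$.

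For these data, $|U(t)\phi_n|^2=(2\pi)^{-d}(1+\cos(2n_1x_1))$ is independent of $t$ and of $x'$. Summing over the $\sim N^{d-1}$ choices of $n'$ and subtracting $(2\pi)^{-d}\Tr\ga_0$ gives
\[
\varrho \sim N^{d-1}\sum_{n_1=1}^N\cos(2n_1x_1)\sim N^{d-1}\Big(\tfrac{\sin((2N+1)x_1)}{\sin x_1}-1\Big).
\]
Arguing exactly as in the proof of Theorem \ref{thm:L3 bound renormalised}, restricted to $|x_1|\le(10N)^{-1}$, this yields $\|\varrho\|_{L^{p^*}}\gs N^{d-1}\cdot N\cdot N^{-1/p^*}$ for large $N$. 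On the other side, $\|\ga_0\|_{\FS^\al}=(\#S)^{1/\al}\sim N^{d/\al}$. The assumed estimate then forces $d-1/p^*\le\si+d/\al$.

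This one-dimensional-in-$x_1$ example may not reach the claimed threshold, since it gives $1/\al\ge 1-(1/p^*+\si)/d$. If so, I would instead replace the $x_1$ factor by the fully "Nakamura-type" saturating example: a coherent sum localised in all variables, with the renormalisation constant subtracted. Let me describe this more carefully. The failure example for Theorem \ref{thm:Nakamura full version} takes $\ga_0=P_{\le N}$, whose density is the constant $(2\pi)^{-d}\#S$. Renormalisation kills that constant, so we must destroy homogeneity in one direction while keeping it in $d-1$ directions. The natural choice is $\ga_0=\sum_{n'}|e_{n'}\rg\lg e_{n'}|\otimes|\psi\rg\lg\psi|$. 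Here $\psi$ is a single normalised function in $x_1$ concentrated at scale $1/N$, for instance $\psi=N^{-1/2}\sum_{|k|\le N}e_k$. Its density is $N^{d-1}|U(t)\psi(x_1)|^2$ minus its mean.

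For $|t|\lesssim N^{-2}$ and $|x_1|\lesssim N^{-1}$, we have $|U(t)\psi|^2\sim N$, which dominates the mean $N^{d-1}/(2\pi)^{d}$ once the $N^{d-1}$ factor is included. Hence $\|\varrho\|_{L^{p^*}}\gs N^{d-1}\cdot N\cdot(N^{-3})^{1/p^*}$, while $\|\ga_0\|_{\FS^\al}=N^{(d-1)/\al}$. I would compute both candidate examples, and intermediate ones where $\psi$ is replaced by a rank-$N^{\beta}$ projection in $x_1$, and optimise over the parameters. The target is exactly $(d-1)(1-1/\al)\le\si$, i.e.\ $1/\al\ge1-\si/(d-1)$. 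I expect the cleanest such choice to mix two things: the homogeneous $(d-1)$-dimensional factor, which saturates Nakamura's condition in dimension $d-1$, and a rank-one factor in $x_1$ that makes the mean negligible. The rank-one factor contributes an $N$-power that cancels against the short-time-localisation loss.

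The main obstacle is this bookkeeping. We need to make sure the subtracted mean is genuinely lower order on the region where we measure the $L^{p^*}$ norm, and that the powers of $N$ coming from the $x_1$ factor cancel exactly to leave $N^{(d-1)(1-1/\al)}\ls N^\si$. Lower bounds on the short-time region will use the trivial estimate $|U(t)\psi(x_1)|\ge \tfrac12|\psi(x_1)|$ for $|t|\le cN^{-2}$, which follows from the Fourier expansion. Finally, we let $N\to\I$ to conclude.
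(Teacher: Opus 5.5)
Neither example you actually compute reaches $1/\al\ge 1-\si/(d-1)$. The final step is left as an optimisation you have not carried out, and the mechanism you predict for it is wrong.

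\begin{itemize}
\item Your first example gives only $d-1/p^*\le\si+d/\al$, that is, $1/\al\ge\frac{d+1}{d+2}-\frac{\si}{d}$, as you note.
\item In your second example, the region $|t|\ls N^{-2}$, $|x_1|\ls N^{-1}$ gives $\|\varrho\|_{L^{p^*}}\gs N^{d-1}\cdot N^{1-3/p^*}=N^{d-1}\cdot N^{-2(d-1)/(d+2)}$. The gain from concentrating $\psi$ at scale $1/N$ therefore does not cancel the short-time loss. It is a net loss, because $p^*<3$, and you only obtain $1/\al\ge\frac{d}{d+2}-\frac{\si}{d-1}$.
\end{itemize}

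In fact no $x_1$-profile can contribute a positive power of $N$. Since $p^*=(d+2)/d\le 2$, H\"older and the $L^4_{t,x}$ Strichartz estimate on $\T$ give $\||U(t)\psi|^2-c\|_{L^{p^*}_{t,x_1}(\T^2)}\ls\|\psi\|_{L^2}^2$. Hence for your tensor-product data, $\|\varrho\|_{L^{p^*}}\ls N^{d-1}\|\psi\|_{L^2}^2$ whatever $\psi$ is.

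The missing idea is to keep the inhomogeneous direction fixed, independent of $N$. In your first family, this means restricting $n_1$ to $\{1\}$ instead of $\{1,\dots,N\}$. Take $\psi=2^{-1/2}(e_1+e_{-1})$ and
\begin{equation*}
\ga_0=\sum_{|n_2|,\dots,|n_d|\le N}|\psi\otimes e_{n'}\rangle\langle\psi\otimes e_{n'}|.
\end{equation*}
Then
\begin{equation*}
\vrh(U(t)P_{\le N}\ga_0P_{\le N}U^*(t))=(2\pi)^{-d}(2N+1)^{d-1}\cos(2x_1),
\end{equation*}
whose $L^{p^*}(\T^{1+d})$ norm is $\sim N^{d-1}$, while $\|\ga_0\|_{\FS^\al}=(2N+1)^{(d-1)/\al}$. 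The assumed estimate then forces $d-1\le\si+(d-1)/\al$, which is exactly the claim. This is the paper's example: $\phi_n=(e_{n+k}+e_{n-k})/\sqrt2$ with $k=(1,0,\dots,0)$ and $n\in\CA_N$.

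Your heuristic is the right one: be homogeneous in $d-1$ directions, so as to saturate Nakamura's condition in dimension $d-1$, and non-constant in one direction, so as to survive renormalisation. The error is in also making the non-constant direction grow with $N$.
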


\begin{remark}
	It is clear that $1/\al \ge 1- \frac{\si}{d-1}$ is (slightly) milder than $1/\al \ge 1-\si/d$. However, we need $\al \le 1 + \ep$ with small $\ep>0$ if $\si$ is small. Therefore the renormalised density has almost no gain of Schatten exponents when $\si$ is close to zero. This means that when $d \geq 2$, the situation is similar to the non-renormalised case.
\end{remark}

\begin{proof}
	Set $k = (1,0,\dots,0) \in \Z^d$.
	Let
	$$
	\CA_N := \{n = (0,n_2,\dots,n_d) \in \Z^d: |n_2|,\dots,|n_d|\le N\}.
	$$
	Then we have $\# \CA_N \sim N^{d-1}$.
	For $n \in \CA_N$, define $\phi_n(x)$ by
	\begin{align}
		\phi_n(x) :=
			\frac{e_{n+k}(x) + e_{n-k}(x) }{\sqrt{2}}.
	\end{align}
	Then $(\phi_n)_{n\in \CA_N}$ is an orthonormal system in $L^2(\T^d)$.
	Since
	\begin{equation}
		U(t)\phi_n(x) = \frac{e^{-it(|n|^2 + 1)}(e_{n+k} + e_{n-k}) }{\sqrt{2}},
	\end{equation}
	we have
	\begin{align}
		\sum_{n\in\CA_N} |U(t)\phi_n|^2 - \frac{1}{(2\pi)^d} \sum_{n \in \CA_N} 1= \frac{1}{(2\pi)^d} \sum_{n \in \CA_N} \cos(2x_1) \sim N^{d-1} \cos(2x_1).
	\end{align}
	Therefore we obtain 
	\begin{equation}
		\No{\sum_{n\in\CA_N} |U(t)\phi_n|^2 - \frac{1}{(2\pi)^d} \sum_{n \in \CA_N} 1}_{L^{p^*}_\tx(\T^{1+d})}
		\gs N^{d-1}.
	\end{equation}
	If we have \eqref{eq:general renormalised}, we need
	\begin{equation}
		N^{d-1} \ls \No{\sum_{n\in\CA_N} |U(t)\phi_n|^2 - \frac{1}{(2\pi)^d} \sum_{n \in \CA_N} 1}_{L^{p^*}_\tx(\T^{1+d})}
		\ls N^\si \K{\# \CA_N}^{1/\al} \sim N^{\si + (d -1)/\al},
	\end{equation}
	which forces $1/\al \ge 1 - \frac{\si}{d-1}$.
\end{proof}

\appendix
\section{Construction of \texorpdfstring{$U_V(t,s)$}{UV(t,s)}}\label{sec:appendix}
\label{SEC:uv:construction}
In this section, we briefly outline the construction of the propagator $U_V(t,s)$ for real-valued $V \in L^2_{t,x}([0,T]\times\T)$. Recall that we consider the linear Schr\"odinger equation given by
\begin{equation}
\label{linear_potential_schro}
\begin{cases}
    i \partial_t u + \De u = V(t,x)u, \quad u:[0,T] \times \T \to \C,\\
    u_{t = s}(x) = u_{0}(x) \in L^2(\T).
\end{cases}
\end{equation}
We want to define
\begin{equation*}
U_V(t,s) u_0 := u(t),
\end{equation*}
where $u(t)$ denotes the solution of \eqref{linear_potential_schro}.
To do this, we first need to prove that there is always a global solution to \eqref{linear_potential_schro}. Note that for the existence of local solutions, we do not require that $V$ be real-valued.
\begin{lemma}
\label{linear_well_posedness}
Suppose that $V \in L^2_{t,x}([0,T]\times\T)$, $u_0 \in L^2(\T)$, and $s \in [0,T]$. Then there is some $\tau = \tau(\|V\|_{L^2_{t,x}},\|u_0\|_{L^2})$ and a unique $u \in C([s-\tau,s+\tau];L^2(\T))$ that solves \eqref{linear_potential_schro}.
\end{lemma}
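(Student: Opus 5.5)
The plan is a standard contraction mapping argument for the Duhamel formulation
\begin{equation*}
u(t) = U(t-s)u_0 - i\int_s^t d\ta \, U(t-\ta)V(\ta)u(\ta),
\end{equation*}
posed in the space $X_\tau := C([s-\tau,s+\tau];L^2(\T)) \cap L^4_\tx([s-\tau,s+\tau]\times\T)$. (On the interval $[s-\tau, s+\tau]$ intersected with $[0,T]$, with $V$ extended by zero outside $[0,T]$.) For $u\in X_\tau$ we define the map $\Ga[u]$ to be the right hand side above. The estimates we need are those of Lemma \ref{lem:Stri inhomogeneous}, which hold equally on intervals $[s,s+\tau]$ and $[s-\tau,s]$ by time translation and time reversal, with constants $\tau^\theta$.

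For the self-mapping estimate, \eqref{eq:L4 refine} gives $\|U(t-s)u_0\|_{X_\tau}\ls (1+\tau^\theta)\|u_0\|_{L^2}$, using that $U$ is unitary for the $C_tL^2_x$ part. For the inhomogeneous term, \eqref{eq:Stri inhomogeneous} together with H\"older's inequality gives
\begin{equation*}
\No{\int_s^t U(t-\ta)V(\ta)u(\ta)d\ta}_{X_\tau} \ls \tau^\theta \|Vu\|_{L^{4/3}_\tx} \le \tau^\theta \|V\|_{L^2_\tx}\|u\|_{L^4_\tx}.
\end{equation*}
The same bound applied to $V(u-w)$ gives the difference estimate $\|\Ga[u]-\Ga[w]\|_{X_\tau}\le C\tau^\theta\|V\|_{L^2_\tx}\|u-w\|_{X_\tau}$. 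We then choose $\tau$ with $C\tau^\theta\|V\|_{L^2_\tx}\le 1/2$ and work on the ball of radius $2C\|u_0\|_{L^2}$ in $X_\tau$. This makes $\Ga$ a contraction, and the fixed point gives existence.

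Uniqueness in $C([s-\tau,s+\tau];L^2)$ needs more care, since a priori a solution need not lie in $L^4_\tx$. I would define solutions as Duhamel solutions in the class $X_\tau$, where uniqueness comes from the contraction (applied on possibly smaller subintervals, then extended by continuity). Alternatively, one can interpret the Duhamel integral in $H^{-1}$ and use that $Vu\in L^1_tH^{-1}_x$ via $L^1_x\subset H^{-1}_x$ on $\T$.

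The main obstacle is exactly this step: making sense of, and getting uniqueness for, the equation for merely $C_tL^2_x$ solutions. The contraction itself is routine. I would adopt the $X_\tau$ (Strichartz-class) notion of solution, which is what the later lemmas use. Note that $\tau$ in fact depends only on $\|V\|_{L^2_\tx}$, since the equation is linear.
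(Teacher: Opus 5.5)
Your proposal is correct and follows the paper's argument: a contraction for the Duhamel map on a ball of radius comparable to $\|u_0\|_{L^2}$, using \eqref{eq:Stri inhomogeneous} and H\"older to produce the small factor $\tau^\theta\|V\|_{L^2_\tx}$. The only difference is that the paper runs the fixed point in $X^{0,b}_I$ with $b=\tfrac12+\eta$, which embeds into both $C_tL^2_x$ and $L^4_\tx$, rather than directly in $C_tL^2_x\cap L^4_\tx$. As you anticipate, the paper's uniqueness also holds only within that solution class.
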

\begin{remark}
If $s = 0$ or $T$, we only consider the solution on the interval  $I= [0,\tau]$ or $[T-\tau,T]$, which ensures that $V$ is well-defined on $I$.
\end{remark}
\begin{proof}
Fix $I= [s-\tau,s+\tau]$ with $\tau$ sufficiently small to be determined later. We make a contraction on the set
\begin{equation*}
E := \{u \in X^{0,b}_I : \|u\|_{X^{0,b}_I} \leq 2\|u_0\|_{L^2}\},
\end{equation*}
where $b = \frac{1}{2} + \eta$. Define the map $\Gamma : E \to E$ via
\begin{equation*}
\Ga u (t) := U(t-s) u_0 - i \int_s^t dt' \, U(t-t') V(t') u(t') .
\end{equation*}
Recalling the standard properties of $X^{0,b}_I$ spaces from \cite{ET16} and using \eqref{eq:L4}, H\"older's inequality, and \eqref{eq:Stri inhomogeneous}, we have
\begin{equation*}
\|\Gamma u\|_{X^{0,b}_I} \leq \|u_0\|_{L^2} + (2\tau)^\theta \|V\|_{L^2_{t,x}} \|u\|_{L^4_{t,x}} \leq \|u_0\|_{L^2} + (2\tau)^\theta \|V\|_{L^2_{t,x}} \|u\|_{X^{0,b}_I}.
\end{equation*}
Taking $\tau = \tau(\|V\|_{L^2_{t,x}}, \|u_0\|_{L^2})$ sufficiently small, we get that $\Gamma$ is well-defined. Similarly, we get that $\Gamma$ is a contraction for $\tau$ sufficiently small.
\end{proof}
To define $U_V$ for arbitrary times, we need to extend local well-posedness to global well-posedness. To do this, we need to prove conservation of mass.
\begin{lemma}
\label{linear_mass_conservation_lemma}
Suppose that $V \in L^2_{t,x}([0,T]\times\T)$ is real-valued and $u_0 \in L^2(\T)$. Then the solution to \eqref{linear_potential_schro} satisfies
\begin{equation*}
\|u(t)\|_{L^2(\T)}^2 = \|u_0\|_{L^2}^2.
\end{equation*}
\end{lemma}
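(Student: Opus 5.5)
The mass conservation should follow from a regularisation argument: the formal computation is standard, but with $V$ only in $L^2_{t,x}$ and $u$ only in $C_tL^2_x\cap X^{0,b}$ the quantity $\mathrm{Im}\int V|u|^2$ is not obviously meaningful pointwise in time. So I will first prove the identity for smooth data and smooth potentials, and then pass to the limit using the stability of the contraction from Lemma \ref{linear_well_posedness}. It suffices to work on the local interval $I=[s-\tau,s+\tau]$ produced there, since the statement on $[0,T]$ then follows by iteration. The local time $\tau$ depends only on $\|V\|_{L^2_{t,x}}$ and $\|u_0\|_{L^2}$, and once mass is conserved on each step it stays uniform.

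\textbf{Step 1: the smooth case.} Take real-valued $V_k\in C^\infty([0,T]\times\T)$ with $V_k\to V$ in $L^2_{t,x}$, and $u_{0,k}\in C^\infty(\T)$ with $u_{0,k}\to u_0$ in $L^2$. For smooth bounded potentials the equation \eqref{linear_potential_schro} has a smooth solution $u_k$, for instance by standard energy estimates in $H^m$ for every $m$. Since $V_k$ is real,
\begin{equation*}
\frac{d}{dt}\|u_k(t)\|_{L^2}^2 = 2\,\mathrm{Re}\int_\T \ov{u_k}\,\pl_t u_k \,dx = 2\,\mathrm{Re}\, i\int_\T \K{\ov{u_k}\De u_k - V_k|u_k|^2}dx = 0,
\end{equation*}
because $\int\ov{u_k}\De u_k=-\|\pl_x u_k\|^2$ is real and $V_k|u_k|^2$ is real. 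Hence $\|u_k(t)\|_{L^2}=\|u_{0,k}\|_{L^2}$ for all $t$.

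\textbf{Step 2: passage to the limit.} This is the main point. Since $\|V_k\|_{L^2_{t,x}}$ and $\|u_{0,k}\|_{L^2}$ are uniformly bounded, the contraction in Lemma \ref{linear_well_posedness} runs on a common interval $I$ for all $k$, and $u_k$ coincides with the fixed point there by uniqueness. I then estimate $u-u_k$ in $X^{0,b}_I$ via Duhamel:
\begin{equation*}
u-u_k = U(t-s)(u_0-u_{0,k}) - i\int_s^t U(t-t')\big[(V-V_k)u + V_k(u-u_k)\big]dt'.
\end{equation*}
Applying \eqref{eq:Stri inhomogeneous}, H\"older's inequality, and $X^{0,b}\subset L^4_{t,x}$ as in the proof of Lemma \ref{linear_well_posedness} gives
\begin{equation*}
\|u-u_k\|_{X^{0,b}_I}\le \|u_0-u_{0,k}\|_{L^2} + (2\tau)^\theta\|V-V_k\|_{L^2_{t,x}}\|u\|_{X^{0,b}_I} + (2\tau)^\theta\|V_k\|_{L^2_{t,x}}\|u-u_k\|_{X^{0,b}_I}.
\end{equation*}
Shrinking $\tau$ if necessary, but uniformly in $k$, I absorb the last term into the left-hand side. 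This yields $u_k\to u$ in $X^{0,b}_I$, and since $b>1/2$ this embeds into $C(I;L^2)$. Therefore $\|u(t)\|_{L^2}=\lim_k\|u_k(t)\|_{L^2}=\lim_k\|u_{0,k}\|_{L^2}=\|u_0\|_{L^2}$ for every $t\in I$.

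The only delicate points are the uniformity of $\tau$ in $k$ and the existence of smooth solutions for smooth $V_k$. Both are routine: the first comes from the explicit form of the contraction constant, and the second from Galerkin truncation together with energy estimates.
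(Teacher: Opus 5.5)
Your proposal is correct and follows essentially the same route as the paper. Both arguments approximate $V$ by smooth real potentials, use the formal identity $\frac{d}{dt}\|u_k(t)\|_{L^2}^2=0$ for the smooth solutions, and pass to the limit in $X^{0,b}_I\subset C(I;L^2)$ through the Duhamel difference estimate. The only differences are minor: you also smooth the initial data $u_0$, which makes explicit a step the paper leaves implicit when it invokes persistence of regularity, and you split $Vu-V_ku_k$ as $(V-V_k)u+V_k(u-u_k)$ rather than $(V-V_k)u_k+V(u-u_k)$.
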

\begin{proof}
We consider the approximation $V^N \to V$ in $L^2_{t,x}$, where we take $V^N$ to be real-valued smooth functions. Then solutions to 
\begin{equation*}
    i \partial_t u^N + \De u^N = V^N(t,x)u^N
\end{equation*}
display persistence of regularity for smooth initial data. It follows that $\|u^N(t)\|_{L^2}^2 = \|u_0\|_{L^2}^2$. Here we have used that $V^N$ is real-valued. Arguing as in the proof of Lemma \ref{linear_well_posedness}, it follows that
\begin{equation*}
\|u-u^N\|_{X^{0,b}_I} \lesssim (2\tau)^\theta \|V - V^N\|_{L^2_{t,x}} \|u^N\|_{X^{0,b}_I} + (2\tau)^\theta \|V\|_{L^2_{t,x}} \|u - u^N\|_{X^{0,b}_I}.
\end{equation*}
So locally, one has that $\|u-u^N\|_{X^{0,b}_I} \to 0$. However, one has
\begin{equation*}
|\|u(t)\|_{L^2} - \|u_0\|_{L^2}| = |\|u(t)\|_{L^2} - \|u^N(t)\|_{L^2}| \leq \|u(t) - u^N(t)\| \leq \|u - u^N\|_{L^\infty_I L^2} \lesssim \|u-u^N\|_{X^{0,b}_I}.
\end{equation*}
Therefore one has conservation of mass for the original equation.
\end{proof}
We have the following corollary.
\begin{corollary}
\label{linear_global_wellposedness}
Suppose that $V \in L^2_{t,x}([0,T]\times \T)$ is real-valued, $u_0 \in L^2(\T)$, and $s \in [0,T]$. Then there is a unique solution to \eqref{linear_potential_schro} in $C_t([0,T];L^2(\T))$ which conserves mass.
\end{corollary}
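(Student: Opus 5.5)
The plan is to combine the local theory of Lemma \ref{linear_well_posedness} with the mass conservation of Lemma \ref{linear_mass_conservation_lemma}, iterating in time. The key observation is that the local existence time $\tau$ in Lemma \ref{linear_well_posedness} depends only on $\|V\|_{L^2_{t,x}([0,T]\times\T)}$ and on $\|u_0\|_{L^2}$. Mass conservation keeps $\|u(t)\|_{L^2}$ equal to $\|u_0\|_{L^2}$ for every $t$. Hence a single step size $\tau_0 := \tau(\|V\|_{L^2_{t,x}}, \|u_0\|_{L^2})>0$ can be used at every restart. Using the full-interval norm of $V$ is harmless, since restricting $V$ to a subinterval only decreases its $L^2$ norm.

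\textbf{Existence.} I would first solve on $[s-\tau_0,s+\tau_0]\cap[0,T]$ with data $u_0$ at time $s$. Next I take the new initial time $s_1 := s+\tau_0$ (and symmetrically $s-\tau_0$). By Lemma \ref{linear_mass_conservation_lemma}, the new data $u(s_1)$ has the same $L^2$ norm, so Lemma \ref{linear_well_posedness} applies again with the same $\tau_0$. After at most $\lceil T/\tau_0\rceil$ steps in each direction the whole interval $[0,T]$ is covered. At the endpoints one uses one-sided intervals, as in the remark following Lemma \ref{linear_well_posedness}. Continuity in $C_t([0,T];L^2)$ holds on each piece. For the gluing, the piecewise solutions agree on the overlaps by local uniqueness, and the Duhamel formulation is additive across restart times. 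Mass is conserved on all of $[0,T]$ by applying Lemma \ref{linear_mass_conservation_lemma} piece by piece.

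\textbf{Uniqueness.} Suppose $u,v \in C_t([0,T];L^2)$ both solve the problem. The set of times where they agree is closed, by continuity. It is also open: near any time $t_0$ in this set, both functions solve the problem with the same data $u(t_0)$. Local uniqueness from Lemma \ref{linear_well_posedness} then gives agreement on a neighbourhood of $t_0$. Since $[0,T]$ is connected, $u=v$.

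\textbf{Main obstacle.} The delicate point is that uniqueness in Lemma \ref{linear_well_posedness} is proved within the contraction ball in $X^{0,b}_I$, not in all of $C_t L^2$. I would handle this by interpreting solutions as Duhamel solutions lying in $X^{0,b}_I$ locally, i.e. in the natural class. Alternatively, one can take any solution in that class and shrink $I$ so that its $X^{0,b}_I$ norm falls within the ball; for $b>1/2$ this is not automatic. If needed, uniqueness can instead be proved by a direct difference estimate. For $w=u-v$ one has $\|w\|_{L^4_{t,x}(I)} \ls |I|^\theta\|V\|_{L^2_{t,x}}\|w\|_{L^4_{t,x}(I)}$ from \eqref{eq:Stri inhomogeneous}, which forces $w=0$ on short intervals, and iteration then gives $w=0$ on $[0,T]$. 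This requires solutions to lie in $L^4_{t,x}$, which I would build into the solution class.
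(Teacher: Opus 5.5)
Your proposal is correct and follows the route the paper intends: the paper gives no written proof of this corollary, treating it as the immediate consequence of iterating Lemma \ref{linear_well_posedness} with a uniform step size, which the mass conservation of Lemma \ref{linear_mass_conservation_lemma} guarantees. Your extra care about the uniqueness class is a sound refinement that the paper glosses over. Since the problem is linear, the difference estimate for $\Gamma u - \Gamma v$ holds on all of $X^{0,b}_I$, not just on the ball, so uniqueness there is automatic, exactly as your $L^4_{t,x}$ difference argument shows.
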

We are now able to complete the construction of the propagator $U_V(t,s)$.
\begin{proposition}
\label{UV_propositon}
Suppose that $V \in L^2_{t,x}([0,T]\times\T)$ is real-valued. Define the linear map $U_V(t,s): L^2(\T) \to L^2(\T)$ by $U_V(t,s) u_0 := u(t)$, where $u$ is the solution of 
\begin{equation*}
\begin{cases}
    i \partial_t u + \Delta u = V(t,x) u, \\
    u(s,x) = u_0(x) \in L^2(\T).
\end{cases}
\end{equation*}
Then $U_V(t,s)$ is defined for all $t,s \in [0,T]$, and we have the following properties.
\begin{itemize}
    \item[(i)] $U_V(t,s) : L^2 \to L^2$ is unitary and $(U_V(t,s))^* = U_V(s,t)$.
    \item[(ii)] For any $t,s,r \in [0,T]$, one has $U_V(t,s) = U_V(t,r)U_V(r,s)$.
     \item[(iii)] Suppose that $(t_n,s_n) \to (t,s)$ and $u_0 \in L^2(\T)$. Then $U_V(t_n,s_n)u_0 \to U_V(t,s)u_0$ in $L^2(\T)$.
\end{itemize}
\end{proposition}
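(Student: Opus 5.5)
We build $U_V(t,s)$ from the solution theory already developed, namely Lemma \ref{linear_well_posedness}, Lemma \ref{linear_mass_conservation_lemma} and Corollary \ref{linear_global_wellposedness}. For each $s\in[0,T]$ and $u_0\in L^2(\T)$, Corollary \ref{linear_global_wellposedness} gives a unique mass-conserving solution $u\in C_t([0,T];L^2(\T))$ with $u(s)=u_0$. The solution is valid both forwards and backwards in time: the local theory works on $[s-\tau,s+\tau]$, and the step size $\tau$ depends only on $\|V\|_{L^2_{t,x}}$ and the conserved quantity $\|u_0\|_{L^2}$, so finitely many steps cover $[0,T]$. Hence $U_V(t,s)u_0:=u(t)$ is defined for all $t,s$. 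Uniqueness, together with the linearity of the Duhamel map, makes $U_V(t,s)$ linear.

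For (ii) we use uniqueness. Fix $r$ and let $u$ be the solution with $u(s)=u_0$. Then $w(t):=u(t)$ is a solution with data $w(r)=U_V(r,s)u_0$ at time $r$, so $U_V(t,r)U_V(r,s)u_0=U_V(t,s)u_0$. The one point to check is that restarting the Duhamel formulation at $r$ is legitimate. For this we note that the integral equation from $s$ implies the one from $r$ by the group law of $U(t)$, applied on each local interval.

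For (i), mass conservation gives that $U_V(t,s)$ is an isometry. By (ii), $U_V(t,s)U_V(s,t)=U_V(t,t)=I$, so $U_V(t,s)$ is surjective and hence unitary. Then $U_V(t,s)^*=U_V(t,s)^{-1}=U_V(s,t)$.

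For (iii), write
\begin{equation*}
U_V(t_n,s_n)u_0-U_V(t,s)u_0=U_V(t_n,s)\bigl(U_V(s,s_n)u_0-u_0\bigr)+\bigl(U_V(t_n,s)u_0-U_V(t,s)u_0\bigr).
\end{equation*}
The second term tends to zero because $t\mapsto U_V(t,s)u_0$ lies in $C_t L^2$. By unitarity, the first term has norm $\|U_V(s,s_n)u_0-u_0\|_{L^2}$, and this tends to zero by the continuity of the map $s'\mapsto U_V(s,s')u_0$.

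This continuity in the initial time is the main obstacle. We plan to obtain it by duality: since $U_V(s,s')=U_V(s',s)^*$, for any $\phi$ we have
\begin{equation*}
\langle U_V(s,s')u_0,\phi\rangle=\langle u_0,U_V(s',s)\phi\rangle,
\end{equation*}
which is continuous in $s'$. This gives weak convergence $U_V(s,s_n)u_0\rightharpoonup u_0$. Combined with the norm identity $\|U_V(s,s_n)u_0\|=\|u_0\|$, weak convergence upgrades to strong convergence. If that route gives trouble, we would instead use the Duhamel formula directly. The key estimate is $\|\int_{s_n}^{s}U(s-t')V(t')u(t')\,dt'\|_{L^2}\lesssim |s-s_n|^\theta\|V\|_{L^2_{t,x}}\|u\|_{L^4}$, which follows from \eqref{eq:Stri inhomogeneous}, and it is uniform in $n$ thanks to the local $X^{0,b}$ bounds.
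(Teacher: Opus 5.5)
Your proposal is correct and follows the paper's proof: well-definedness comes from global well-posedness, unitarity from mass conservation plus backward solvability, the composition law from uniqueness, and (iii) from splitting off the dependence on the initial time. The only variation is in (iii): you get $U_V(s,s_n)u_0\to u_0$ from weak convergence plus norm preservation, whereas the paper uses unitarity directly (in effect $\|U_V(s,s_n)u_0-u_0\|_{L^2}=\|u_0-U_V(s_n,s)u_0\|_{L^2}$) to reduce to continuity in the first variable, which it re-derives from the Duhamel formula rather than quoting the $C_tL^2$ regularity from Corollary \ref{linear_global_wellposedness} as you do.
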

\begin{proof}
The map is well-defined by global well-posedness, and it is linear because \eqref{linear_potential_schro} is linear. For property $(i)$, we note that conservation of mass means $U_V(t,s)$ is an isometry on $L^2(\T)$, and backwards well-posedness ensures that the map is surjective. Recall that every surjective linear isometry on a Hilbert space is unitary. Therefore, because $(U_V(t,s))^{-1} =  U_V(s,t)$, it follows that $(U_V(t,s))^* = U_V(s,t)$. Property $(ii)$ follows from uniqueness of solutions. For property $(iii)$, we note that we can write
\begin{multline*}
(U_V(t,s) - U_V(t_n,s_n))u_0 = (U_V(t,s) - U_V(t_n,s))u_0 + (U_V(t_n,s) - U_V(t_n,s_n))u_0 \\
= (U_V(t,s) - U_V(t_n,s))u_0 + U_V(t_n,t^*) (U_V(t^*,s) - U_V(t^*,s_n))u_0,
\end{multline*}
for some $t^* \in [0,T]$. In particular, by unitarity it suffices to show that
\begin{equation}
\label{convergence_result}
(U_V(t_n,s) - U_V(t,s))u_0 \to 0 
\end{equation}
in $L^2(\T)$ as $t_n \to t$. We write the left hand side of \eqref{convergence_result} in terms of Duhamel's formula, which yields
\begin{multline}
\label{Duhamel_1}
(U_V(t_n,s) - U_V(t,s))u_0 = (U(t_n - s) - U(t-s))u_0 \\
- i \int_s^{t_n} dt' \, U(t_n-t') V(t') u(t')
+ i \int_s^{t} dt' \, U(t-t') V(t') u(t').
\end{multline}
The strong continuity of $U(t)$ implies that the first term on the right hand side converges to zero in $L^2$ as $t_n \to t$. For the second term, we rewrite it as
\begin{equation}
\label{Duhamel_2}
i \int_{t}^{t_n} dt' \,  U(t_n-t') V(t') u(t')  + i \int_s^{t} dt' \, (U(t-t') - U(t_n-t')) V(t') u(t').
\end{equation}
For the first term in \eqref{Duhamel_2}, we bound
\begin{equation*}
\left\|\int_{t}^{t_n} dt' \,  U(t_n-t') V(t') u(t')\right\|_{L^2(\T)}
\lesssim |t-t_n|^{\theta} \|V\|_{L^2_{t,x}} \|u\|_{L^{4/3}_{t,x}} \to 0
\end{equation*}
as $t_n \to t$ by Lemma \ref{lem:Stri inhomogeneous}. For the second term in \eqref{Duhamel_2}, we write
\begin{multline*}
\left\|\int_s^{t} dt' \, (U(t-t') - U(t_n-t')) V(t') u(t')\right\|_{L^2} = \left\|(U(t) - U(t_n)) \left(\int_s^{t} dt' \, U(-t') V(t') u(t')\right)\right\|_{L^2} \\
= \left\|(U(t) - U(t_n)) \Psi(t,s)\right\|_{L^2}.
\end{multline*}
Arguing as in the proof of \eqref{eq:Stri V dual}, it follows that $\Psi(t,s) \in L^2_x(\T)$, and the result follows from the strong continuity of $U(t)$.
\end{proof}

\bibliographystyle{abbrv}
\bibliography{refs}

\end{document}